\documentclass[reqno,11pt]{amsart}

\usepackage{geometry}
\usepackage{graphicx}
\usepackage{epstopdf}

\usepackage{amsmath,amssymb}
\usepackage{bm}
\usepackage{mathtools}
\usepackage{stmaryrd}
\usepackage{mathrsfs}
\usepackage{euscript}
\usepackage{amsfonts}

\usepackage{amsthm}
\usepackage{empheq}
\usepackage{cases}
\numberwithin{equation}{section}

\usepackage{caption}
\usepackage{subcaption}
\usepackage{booktabs}
\usepackage{array}
\usepackage{tabularx}
\usepackage{multirow}
\usepackage{xcolor}

\usepackage[numbers,sort]{natbib}
\usepackage[colorlinks=true, linkcolor=blue, citecolor=blue, urlcolor=cyan]{hyperref}

\usepackage{enumitem}
\usepackage{float}

\newtheorem{theorem}{Theorem}[section]
\newtheorem{lemma}[theorem]{Lemma}
\newtheorem{proposition}[theorem]{Proposition}
\newtheorem{corollary}[theorem]{Corollary}
\theoremstyle{definition}

\newtheorem{example}[theorem]{Example}

\theoremstyle{remark}
\newtheorem{remark}{Remark}

\newtheoremstyle{assumpstyle}
{8pt}
{8pt}
{\normalfont}
{}
{\bfseries}
{.}
{ }
{}

\theoremstyle{assumpstyle}
\newtheorem{assumption}{Assumption}

\title[Gradient flow for action ground state of RNLS]{Analysis of gradient flow for computing defocusing action ground states of rotating nonlinear Schr\"odinger equations}

\author[W. Liu]{Wei Liu}
\address{W.~Liu: College of Science, National University of Defense Technology, Changsha, 410073, China}
\email{wl@nudt.edu.cn}

\author[T. Wang]{Tingfeng Wang}
\address{T.~Wang: School of Mathematics and Statistics, Wuhan University, Wuhan, 430072, China}
\email{tingfengwang@whu.edu.cn}

\author[Y. Yuan]{Yongjun Yuan}
\address{Y.~Yuan: MOE-LCSM, School of Mathematics and Statistics, Hunan Normal University, Changsha, 410081, China}
\email{yyj1983@hunnu.edu.cn}

\author[X. Zhao]{Xiaofei Zhao}
\address{X.~Zhao: School of Mathematics and Statistics \& Computational Sciences Hubei Key Laboratory, Wuhan University, Wuhan, 430072, China}
\email{matzhxf@whu.edu.cn}

\date{}

\begin{document}

\maketitle

\begin{abstract}
This work focuses on the numerical computation of defocusing action ground states for rotating nonlinear Schr\"odinger equations (RNLS) using a direct gradient flow (DGF) method. We address theoretical gaps in the existing literature concerning the stability and convergence of this DGF scheme. Firstly, we prove the unconditional stability of the DGF scheme, demonstrating that the action functional is monotonically non-increasing along the discrete flow for arbitrary time step sizes. Secondly, we establish a rigorous convergence analysis, proving global convergence under minor assumptions and local exponential convergence to the action ground state under a reasonable non-degeneracy condition. The analysis relies on the uniform boundedness of sublevel sets of the action functional and introduces a tailored $H^1$-distance between phase-shift equivalence classes to handle complex-valued ground states with quantized vortices. A novel analytical framework is also developed to establish the exponential convergence rate. Numerical experiments are presented to validate the theoretical findings, demonstrating both the global migration towards a neighborhood of the ground state and subsequent exponential convergence.
\\
\noindent\textbf{Keywords.} Rotating nonlinear Schr\"odinger equation; Action ground state; Gradient flow; Unconditional stability; Global convergence; Exponential convergence rate.

\noindent\textbf{AMS(2010) subject classifications.} 65M12, 35K20, 35K35, 35K55, 65Z05
\end{abstract}

\section{Introduction}

The nonlinear Schr\"odinger equation is a fundamentally important model in mathematical physics, widely applied in quantum physics \cite{bao2013mathematical, lieb2001rigorous}, nonlinear optics \cite{chen2006foundations, Malomed} and wave dynamics in general \cite{sulem1999nonlinear,dnls}. To describe matters in a rotating frame, e.g., the rotating BEC \cite{RBEC},  a rotating force term has been further added leading to the following rotating nonlinear Schr\"odinger equation (RNLS) \cite{aftalion2001vortices, bao2013mathematical} in $d$ dimensions ($\mathbf x=(x_1,\ldots,x_d) \in \mathbb{R}^{d},\ d\in\mathbb{N}^+$):
\begin{align*}
	\mathrm i\partial_t\psi(\mathbf x,t)
	= -\frac{1}{2}\Delta \psi(\mathbf x,t) + V(\mathbf x)\,\psi(\mathbf x,t)
	+ \beta|\psi(\mathbf x,t)|^{p-1}\psi(\mathbf x,t) - \Omega\,L_z\psi(\mathbf x,t),\quad t>0,
\end{align*}
where $\psi$ is a complex-valued wave function, $\Omega\ge 0$ denotes the rotating speed with $L_{z}$ the angular momentum operator
\begin{align*}
	L_{z} = \begin{cases}
		\mathrm i \left( x_{2} \partial_{x_{1}} - x_{1} \partial_{x_{2}} \right), & d\geq 2, \\ 0, & d=1,
	\end{cases}
\end{align*}
and $V: \mathbb{R}^{d} \to \mathbb{R}$ is an external trapping potential typically taken as a harmonic oscillator $V(\mathbf{x}) = \frac{1}{2}\sum_{i = 0}^{d}\gamma_{i}^{2} x_{i}^{2}$ with $\gamma_{i}$ the trapping frequency in applications. Moreover,  $\beta\in\mathbb{R}$ is a given parameter denoting the strength of nonlinear self-interaction with $\beta>0$ and $\beta<0$ corresponding to the defocusing and focusing cases, and $p>1$ is a given exponent.
For $d\ge 3$, one usually assumes $1<p<(d+2)/(d-2)$ for well-posedness of the model \cite{berestycki1983nonlinearI, pohozaev1965eigenfunctions}. 

Among all possible solutions of RNLS, the standing wave or soliton solution  $\psi(\mathbf{x},t) = \mathrm{e}^{\mathrm{i}\omega t}\phi(\mathbf{x})$ ($\omega \in \mathbb{R}$ often referred as the chemical potential) that maintains its shape during dynamics, finds special importance in applications, where $\phi(\mathbf{x})$ solves the stationary RNLS:
\begin{align}
	-\frac{1}{2}\Delta \phi(\mathbf{x})
	+ V(\mathbf{x})\,\phi(\mathbf{x})
	+\beta\bigl|\phi(\mathbf{x})\bigr|^{p-1}\phi(\mathbf{x}) - \Omega L_{z}\phi(\mathbf{x})
	+ \omega\,\phi(\mathbf{x})
	= 0, \quad 
	\mathbf x \in \mathbb{R}^{d}.
	\label{eq:semilinear-elliptic-rot}
\end{align}
It has been known from elliptic theory that \eqref{eq:semilinear-elliptic-rot} possesses infinitely many nontrivial solutions \cite{berestycki1983nonlinearII, strauss1977existence}. Identifying the most stable and physically relevant standing waves corresponds to the study of ground state (GS), where two types of GS have been defined in the literature \cite{berestycki1983nonlinearI, bao2013mathematical,Dovetta}. The first is the \emph{energy GS} that is a minimizer of the energy functional \cite{bao2013mathematical}
\begin{equation*}
    E_{\Omega}(\phi) := \frac{1}{2}\|\nabla \phi\|_{L^{2}}^{2} + \int_{\mathbb{R}^d} V|\phi|^2\,{\rm d}\mathbf{x}
    + \frac{2\beta}{p+1}\|\phi\|_{L^{p+1}}^{p+1} + L_{\Omega}(\phi),
    \qquad
    L_{\Omega}(\phi) := -\Omega\int_{\mathbb{R}^d}{ \overline{\phi} L_{z}\phi }\,\text{d}\mathbf{x},
\end{equation*}
subject to a prescribed mass $m_{0}>0$, i.e., $\min\{E_{\Omega}(\phi)\,:\, \phi\in H^{1}(\mathbb{R}^d),\ \|\phi\|_{L^{2}} = m_{0}\}$.
The second kind is the \emph{action GS} defined as a nontrivial minimizer of the \textit{action functional}
\begin{equation}\label{eq:action-functional-S}
    S_{\Omega,\omega}(\phi) := \frac{1}{2}\|\nabla \phi\|_{L^{2}}^{2} + \int_{\mathbb{R}^d} V|\phi|^2\,{\rm d}\mathbf{x}
    + \frac{2\beta}{p+1}\|\phi\|_{L^{p+1}}^{p+1} + L_{\Omega}(\phi)+ \omega \|\phi\|_{L^{2}}^{2}
    =E_{\Omega}(\phi)+ \omega\|\phi\|_{L^2}^2
\end{equation}
under the natural constraint of \eqref{eq:semilinear-elliptic-rot}, receiving considerable attention ever since the work of Berestycki and Lions \cite{berestycki1983nonlinearI}, and it is later found equivalently to be defined on the Nehari manifold \cite{fukuizumi2001stability,ardila2021global,liu2023computing}, 
i.e., 
\begin{equation}\label{eq:action-gs-def-org}
    \phi_{g} \in \arg\min\{S_{\Omega, \omega}(\phi)\,:\, \phi\in \mathcal{M}\}, 
    \qquad 
    \mathcal{M} := \left\{ \phi\in X\setminus\{0\} \,\colon\, K_{\Omega, \omega}(\phi)=0 \right\},
\end{equation}
where
$$
    K_{\Omega, \omega}(\phi) = \frac{1}{2}\|\nabla \phi\|_{L^{2}}^{2} + \int_{\mathbb{R}^d} V|\phi|^2\,{\rm d}\mathbf{x} + \beta\|\phi\|_{L^{p+1}}^{p+1} + L_{\Omega}(\phi) + \omega \|\phi\|_{L^{2}}^{2}.
$$
The energy GS has been extensively studied both theoretically and numerically, e.g., \cite{RGS,lieb2001rigorous, Jeanjean, bao2004computing, bao2013mathematical, zhuang2019efficient,altmann2021jmethod, cances2010numerical, dion2007ground,antoine2017efficient, danaila2010new, danaila2017computation, henning2020sobolev, feng2026mass,liu2021normalized,CDLX2023JCP}. For action GS, the seminal work \cite{berestycki1983nonlinearI}  initiated a vast literature on theoretical studies, e.g., \cite{ardila2021global, berestycki1983nonlinearI, fukuizumi2001stability, fukuizumi2003instability, shatah1985instability, Dovetta}. Some recent efforts have been made to understand the mathematical relation between the two kinds of GS. The results \cite{Dovetta, Jeanjean,liu2025action} have revealed the in-equivalence between the energy GS and the action GS, underscoring the necessity of studying them independently.

On the computational side of the action GS, for the focusing case ($\beta < 0$), \cite{WangChunshan} employed a projected gradient flow approach to solve the Nehari-constrained minimization problem \eqref{eq:action-gs-def-org}.  Our previous work \cite{liu2023computing} proposed an $L^{p+1}$-constrained minimization formulation equivalent to but simpler than \eqref{eq:action-gs-def-org}, based on which a normalized gradient flow method was proposed. In a parallel recent work \cite{liu2026convergence}, we established the convergence of the method. On the other hand, for the defocusing case ($\beta>0$), we have shown in \cite{liu2023computing} that the constraint in \eqref{eq:action-gs-def-org} can be completely removed, leading to an equivalent unconstrained minimization formulation for the action GS:
\begin{equation}\label{eq:action-gs-def}
    \phi_{g} \in \arg\min\{S_{\Omega, \omega}(\phi)\,:\, \phi\in H^{1}(\mathbb{R}^{d})\}.
\end{equation}
Based on \eqref{eq:action-gs-def}, 
an efficient direct gradient flow (DGF) method has been proposed. Despite its practical effectiveness demonstrated in \cite{liu2023computing, liu2025computing}, the DGF scheme still lacks theoretical support in two major aspects: (i) only a \emph{modified} action is shown \cite{liu2023computing} to decay without a guaranty for stability of the original action $S_{\Omega,\omega}$; (ii) no convergence analysis has been done so far.

This work focuses on the defocusing action GS of RNLS and is targeted to fill the aforementioned theoretical gaps for the DGF scheme. More precisely, we shall first establish for the DGF scheme an \textbf{unconditional stability in the {original} action}. That is, $S_{\Omega,\omega}$ is monotonically non-increasing along the discrete flow for arbitrary time step. Then, we shall carry out a rigorous convergence analysis for the DGF scheme and obtain two main results: (i) under minor assumptions, \textbf{a global convergence}; (ii) under a reasonable non-degeneracy assumption, a {local} convergence with the \textbf{optimal exponential rate} toward the action GS. The global and local behavior of DGF will be numerically justified: the numerical solution first migrates into a neighborhood of the GS, and subsequently exhibits exponential convergence. The novelty of our analysis includes: 
\begin{itemize}
    \item The uniform boundedness of sublevel set $S_{\leq m} = \left\{ \phi\in H^{1}(\mathbb{R}^{d}) \colon S_{\Omega, \omega}(\phi)\leq m \right\}$ in $H^1$ for any $m>0$, which gives stability of the discrete trajectory in DGF and is crucial for establishing the monotonic decay of $S_{\Omega,\omega}$.

    \item A tailored $H^{1}$-distance between phase-shift equivalence classes, which is introduced to handle the invariance of $S_{\Omega,\omega}$ for complex-valued GS under constant phase changes, enabling measurement of the distance from the GS in the presence of quantized vortices. 

    \item A new analytical framework for establishing exponential convergence rate of time-discrete gradient flow  toward the steady state.
\end{itemize}

The rest of the paper is organized as follows. Section~\ref{sec. 2} briefly reviews the DGF method and establishes the stability of its action functional. In Section~\ref{sec. 3}, we in a sequel establish the global convergence result and the local exponential convergence rate of  DGF. Section~\ref{sec. 4} presents  numerical experiments that confirm our theoretical findings. Finally, Section~\ref{sec. 5} provides some concluding remarks. For later use, we clarify some notation and conventions: $a\lesssim b$ means $|a|\le C\, b$ and $a\gtrsim b$ means $|a|\ge C\, b$, for some constant $C>0$ independent of temporal discretization parameters (such as the time step $\tau$ and the time level $n$); $a\asymp b$ means both $a\lesssim b$ and $b\lesssim a$. Throughout the work, we assume that $\beta>0$ in \eqref{eq:semilinear-elliptic-rot} or \eqref{eq:action-functional-S}.

\section{DGF method and its stability} \label{sec. 2}

This section begins with a brief review of the DGF scheme proposed in \cite{liu2023computing} for computing the action GS in the defocusing case, and then establishes in detail its stability in terms of the unconditional decay of the action functional $S_{\Omega,\omega}$. 

\subsection{Preliminaries} 
Let us first give some notations. 
For numerical computation and theoretical analysis, we will consider the problem  \eqref{eq:action-gs-def-org} or \eqref{eq:action-gs-def} on a truncated bounded domain $D \subset \mathbb{R}^d$ with appropriate boundary conditions (e.g., homogeneous Dirichlet or periodic boundary conditions). Such a truncation is justified as the GS exhibits a fast decay in the far field at the presence of the confinement potential $V(\mathbf{x})$, where by choosing $D$ sufficiently large, the truncation error is negligible. The real inner products of the functional spaces $L^{2}(D)$ and $H^{1}(D)$ are denoted as 
\[
	\left\langle \phi, \psi \right\rangle_{L^{2}(D)} := \operatorname*{Re}\left( \int_{D}{ \phi \, \overline{\psi} }\,\text{d}\mathbf{x} \right), \quad \left\langle \phi, \psi \right\rangle_{H^{1}(D)} := \operatorname*{Re}\left( \int_{D}{ \phi \, \overline{\psi} }\,\mathrm{d}\mathbf{x} + \int_{D}{ \nabla\phi \cdot \nabla\overline{\psi} }\,\mathrm{d}\mathbf{x} \right),
\]
where $\overline{z}$ means the complex conjugate of $z\in \mathbb{C}$, and their induced norms are denoted as $\left\| \cdot \right\|_{L^{2}}$
and $\left\| \cdot \right\|_{H^{1}}$. We will write $X = H^{1}(D)$ for short and also denote
\[ 
\left\langle \phi, \psi \right\rangle := \int_{D}{ \phi \, \overline{\psi} }\,\mathrm{d}\mathbf{x}\quad  \phi\in H^{-1}(D),\ \psi\in X,
\]
where $H^{-1}(D)$ is the dual space of $X$ with the norm 
$$\left\| \phi \right\|_{H^{-1}} = \sup_{\psi\in X, \psi \neq 0} \frac{|\left\langle \phi, \psi \right\rangle|}{\left\| \psi \right\|_{H^{1}}}.$$

Hereafter, we change the domain of integration in functionals $S_{\Omega,\omega}$, $K_{\Omega,\omega}$, $L_{\Omega}$, $E_{\Omega}$ and in some relevant norms from $\mathbb{R}^d$ to $D$.  
To provide the existence of GS, we are under the following basic setup.

\begin{assumption}\label{assump:potential-condition}
	Let $1 < p < \frac{d+2}{d-2}$ for $d \geq 3$ and $1 < p < \infty$ for $d = 1, 2$. Assume that $V \in L^{\infty}(D)$, and one of the following holds:
	\begin{enumerate}[label=(\alph*)]
		\item $d=1$ (or $\Omega = 0$) and $V(\mathbf{x}) \geq 0$ for all $\mathbf{x} \in D$.
		\item $d\ge 2$, $V(\mathbf{x}) = \frac{1}{2}\sum_{i = 0}^{d}\gamma_{i}^{2} x_{i}^{2}$ with $\gamma_{i} > 0$ and $\Omega < \min\left\{ \gamma_{1}, \gamma_{2} \right\}$.
	\end{enumerate}
\end{assumption}

Under Assumption~\ref{assump:potential-condition}, $X$ is (compactly) embedded into $ L^{p+1}(D)$, and the functionals $ S_{\Omega, \omega} $ and $K_{\Omega, \omega}$ are well-defined in $X$. Next, we recall several standard inequalities and fix the notation for the involved generic constants that will be used throughout.

\begin{lemma}\label{lem:standard-inequalities}
	For any $u \in X$, the following inequalities hold:
	\begin{enumerate}[label=(\roman*)]
		\item (Embedding inequality) There exists a constant $C_{E} := C_{E}(d, p, D)$ such that
		\begin{align*}
			\| u \|_{L^{p+1}} \leq C_{E} \| u \|_{H^{1}}.
		\end{align*}
		\item (Gagliardo--Nirenberg inequality) There exists a constant $C_{G} := C_{G}(d, p, D)$ such that
		\begin{align*}
			\| u \|_{L^{p+1}} \leq C_{G} \| u \|_{L^{2}}^{1-\theta} \| \nabla u \|_{L^{2}}^{\theta},
		\end{align*}
		where $\theta = d\left( \frac{1}{2} - \frac{1}{p+1} \right) \in (0,1)$.
		\item If $d\ge 2$, then for any constant $c>0$, 
		\begin{align*}
			\left| \Omega \int_{D}{ \overline{u}\, L_{z}u }\,\text{d}\mathbf{x} \right| \leq \int_{D}{ \left( \frac{c}{2}\left| \nabla u \right|^{2} + \frac{|\Omega|^{2}}{2c}\left( x_{1}^{2} + x_{2}^{2} \right)|u|^{2} \right) }\,\text{d}\mathbf{x}.
		\end{align*}
	\end{enumerate}
\end{lemma}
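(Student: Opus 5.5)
Parts (i) and (ii) are classical and will be cited; part (iii) follows from an elementary pointwise estimate.

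For (i), we use the Sobolev embedding $H^{1}(D)\hookrightarrow L^{p+1}(D)$. It is valid because $p+1<2d/(d-2)$ when $d\ge3$, and every finite exponent is allowed when $d=1,2$. On a bounded domain with Dirichlet or periodic boundary conditions, or on a Lipschitz domain via an extension operator, this gives the constant $C_E(d,p,D)$.

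For (ii), we apply the Gagliardo--Nirenberg interpolation inequality. The exponent $\theta$ is fixed by scaling: $\frac{1}{p+1}=\frac{1-\theta}{2}+\theta\left(\frac12-\frac1d\right)$, so $\theta=d\left(\frac12-\frac1{p+1}\right)$. The subcritical range of $p$ guarantees $\theta\in(0,1)$.

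On a bounded domain the pure form $\|u\|_{L^2}^{1-\theta}\|\nabla u\|_{L^2}^\theta$ requires a Poincaré inequality $\|u\|_{L^2}\lesssim\|\nabla u\|_{L^2}$, which holds in the Dirichlet case. In the periodic case it is not available, since constants violate the pure inequality. There I would either state the inequality with $\|u\|_{H^1}$ in place of $\|\nabla u\|_{L^2}$, or note that the paper only uses it in the Dirichlet setting. This boundary-condition issue is the only real subtlety, and I would handle it with a remark rather than a full proof.

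For (iii), with $L_z=\mathrm i(x_2\partial_{x_1}-x_1\partial_{x_2})$, we have pointwise
\[
|\overline{u}\,L_zu|\le |u|\,|x_2\partial_{x_1}u-x_1\partial_{x_2}u|\le |u|\,(x_1^2+x_2^2)^{1/2}|\nabla u|
\]
by Cauchy--Schwarz in $\mathbb R^2$. Multiplying by $|\Omega|$ and applying Young's inequality $ab\le \frac c2 a^2+\frac1{2c}b^2$ with $a=|\nabla u|$ and $b=|\Omega|(x_1^2+x_2^2)^{1/2}|u|$ gives
\[
|\Omega\,\overline u L_z u|\le \frac c2|\nabla u|^2+\frac{|\Omega|^2}{2c}(x_1^2+x_2^2)|u|^2 .
\]
Integrating over $D$ and moving the absolute value outside the integral yields (iii). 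Every term is finite because $D$ is bounded and $u\in H^1(D)$.
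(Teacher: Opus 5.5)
Your proposal is correct and matches the paper, which gives no separate proof: it recalls (i) and (ii) as the standard Sobolev embedding and Gagliardo--Nirenberg inequality, and (iii) is exactly the pointwise Cauchy--Schwarz bound $|\Omega\,\overline{u}L_zu|\le |\Omega|(x_1^2+x_2^2)^{1/2}|u|\,|\nabla u|$ followed by Young's inequality, as you carry it out. Your caveat on (ii) is also well founded: with periodic boundary conditions the constant function violates the pure form, so there it must be read with $\|u\|_{H^{1}}$ in place of $\|\nabla u\|_{L^{2}}$. The resulting extra $O(\tau^{2}\|\mu^{n+1}\|_{L^{2}}^{2})$ term in the proof of Theorem~\ref{thm:action_decaying} is then absorbed by slightly enlarging $\alpha_M$.
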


When $V$ satisfies Assumption~\ref{assump:potential-condition}, the linear operator $-\frac{1}{2}\Delta + V - \Omega L_{z} $ has a countably infinite discrete set of eigenvalues with the smallest eigenvalue $\lambda_0$ being simple \cite{GT}: 
\begin{align}
	\lambda_0 := \inf_{u\in X, \, \|u\|_{L^2}=1 }\left( \frac{1}{2}\|\nabla u\|_{L^2}^2 + \int_{D}V|u|^2\,{\rm d}\mathbf{x} + L_{\Omega}(u) \right). \label{eq:omega-lambda-0-def}
\end{align}
When the given chemical potential in \eqref{eq:action-functional-S} satisfies $\omega < -\lambda_0$, we have  established in \cite{liu2023computing} the existence of a global minimizer and shown that the Nehari constraint can be removed. Define the sublevel set $S_{\leq m} := \left\{ \phi\in X \colon S_{\Omega, \omega}(\phi)\leq m \right\}$ for any $m \ge 0$.

\begin{proposition}[Unconstrained formulation \cite{liu2023computing}]\label{lem:GS-existence}
	Let Assumption~\ref{assump:potential-condition} hold and $\omega < -\lambda_0$. Then, the following properties hold:
	\begin{enumerate}[label=(\roman*)]
		\item The action functional $S_{\Omega, \omega}$ is bounded from below, i.e., $\inf_{\phi\in X} S_{\Omega, \omega}(\phi) > -\infty$.
		
		\item  $\mathcal{M} \subset S_{\leq 0}$ and $\inf_{\phi \in \mathcal{M}} S_{\Omega,\omega}(\phi) < 0$.
		
		\item There exists a $\phi_{g}\in X$ such that
		\begin{align}\label{gs unconstrain def}
			S_{\Omega, \omega}(\phi_{g}) = \inf_{\phi\in X} S_{\Omega, \omega}(\phi) = \inf_{\phi\in \mathcal{M}} S_{\Omega, \omega}(\phi).
		\end{align}
	\end{enumerate}
\end{proposition}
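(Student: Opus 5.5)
We work on the bounded domain $D$ with $X=H^1(D)$. Write $Q(\phi):=\frac12\|\nabla\phi\|_{L^2}^2+\int_D V|\phi|^2\,{\rm d}\mathbf{x}+L_\Omega(\phi)$ for the quadratic part, so that
\[
S_{\Omega,\omega}(\phi)=Q(\phi)+\omega\|\phi\|_{L^2}^2+\tfrac{2\beta}{p+1}\|\phi\|_{L^{p+1}}^{p+1}.
\]
The plan is: (a) a coercive lower bound on $Q$; (b) the lower bound on $S_{\Omega,\omega}$ from H\"older; (c) scaling along rays for the Nehari statements; (d) the direct method for the minimizer.

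\textbf{Coercivity of $Q$.} Under Assumption~\ref{assump:potential-condition}(a), $L_\Omega=0$ and $V\ge0$, so $Q(\phi)\ge\frac12\|\nabla\phi\|_{L^2}^2$. Under (b), apply Lemma~\ref{lem:standard-inequalities}(iii) with $c<1$ close to $1$ and $\Omega^2/c<\min\{\gamma_1^2,\gamma_2^2\}$. Then
\[
Q(\phi)\ge\tfrac{1-c}{2}\|\nabla\phi\|_{L^2}^2+\int_D\Bigl(V-\tfrac{\Omega^2}{2c}(x_1^2+x_2^2)\Bigr)|\phi|^2\,{\rm d}\mathbf{x}\ge\tfrac{1-c}{2}\|\nabla\phi\|_{L^2}^2 ,
\]
and the weight in the integral is nonnegative. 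In both cases $Q(\phi)\ge\lambda_0\|\phi\|_{L^2}^2$ by the definition \eqref{eq:omega-lambda-0-def}.

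\textbf{Part (i).} Since $D$ is bounded, H\"older gives $\|\phi\|_{L^2}^2\le|D|^{\frac{p-1}{p+1}}\|\phi\|_{L^{p+1}}^2$. Because $\beta>0$ and $p+1>2$,
\[
S_{\Omega,\omega}(\phi)\ge \tfrac{1-c}{2}\|\nabla\phi\|_{L^2}^2-|\omega|\,|D|^{\frac{p-1}{p+1}}t^2+\tfrac{2\beta}{p+1}t^{p+1},\qquad t=\|\phi\|_{L^{p+1}} .
\]
The function of $t$ is bounded below, which proves (i). The same inequality shows that sublevel sets $S_{\le m}$ are bounded in $X$, since the gradient term and $t$ are both controlled.

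\textbf{Part (ii).} Set $a(\phi):=Q(\phi)+\omega\|\phi\|_{L^2}^2$ and $b(\phi):=\|\phi\|_{L^{p+1}}^{p+1}>0$ for $\phi\ne0$. On $\mathcal M$ we have $a=-\beta b$, hence
\[
S_{\Omega,\omega}=a+\tfrac{2\beta}{p+1}b=\beta b\Bigl(\tfrac{2}{p+1}-1\Bigr)<0 .
\]
So $\mathcal M\subset S_{\le0}$, and in fact $S_{\Omega,\omega}<0$ on $\mathcal M$. To get a negative infimum it suffices that $\mathcal M\neq\emptyset$. Take $u$ almost attaining $\lambda_0$, so that $a(u)<0$, which is possible because $\omega<-\lambda_0$. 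Along the ray $su$ we have $K_{\Omega,\omega}(su)=s^2a(u)+\beta s^{p+1}b(u)$, which vanishes at $s^{p-1}=-a(u)/(\beta b(u))>0$. Then $su\in\mathcal M$ and $S_{\Omega,\omega}(su)<0$.

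\textbf{Part (iii).} This is the main step. Take a minimizing sequence $\phi_n$ for $\inf_X S_{\Omega,\omega}$. By the bound in part (i) it is bounded in $X$, so along a subsequence $\phi_n\rightharpoonup\phi_g$ weakly in $X$ and $\phi_n\to\phi_g$ strongly in $L^2$ and in $L^{p+1}$, by compactness on $D$ (Assumption~\ref{assump:potential-condition}).
\begin{itemize}
\item The gradient term is weakly lower semicontinuous.
\item The $V$, $\omega$ and $L^{p+1}$ terms converge.
\item The term $L_\Omega$ is the delicate one, because it couples $\nabla\phi$ with $\phi$. I would handle it by writing $\frac12\|\nabla\phi\|^2+L_\Omega(\phi)$ as $\frac12\|(\nabla-\mathrm{i}\Omega A)\phi\|^2$ minus a bounded multiple of $\int(x_1^2+x_2^2)|\phi|^2$, with $A=(-x_2,x_1,0,\dots)$. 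The first piece is a convex continuous functional, hence weakly lsc, and the second converges strongly. Alternatively, $\int\bar\phi_n x_j\partial_k\phi_n$ converges, being a strong-times-weak product with a bounded weight $x_j$ on $D$.
\end{itemize}
Hence $S_{\Omega,\omega}(\phi_g)\le\liminf_n S_{\Omega,\omega}(\phi_n)=\inf_X S_{\Omega,\omega}$, so $\phi_g$ is a global minimizer. By part (ii), $S_{\Omega,\omega}(\phi_g)\le\inf_{\mathcal M}S_{\Omega,\omega}<0$, so $\phi_g\ne0$. As a critical point, $\phi_g$ satisfies $\langle S'_{\Omega,\omega}(\phi_g),\phi_g\rangle=2K_{\Omega,\omega}(\phi_g)=0$, so $\phi_g\in\mathcal M$. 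This gives the reverse inequality $\inf_{\mathcal M}S_{\Omega,\omega}\le S_{\Omega,\omega}(\phi_g)$, which proves \eqref{gs unconstrain def}.
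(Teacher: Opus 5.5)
Your proof is correct. The paper gives no argument for Proposition~\ref{lem:GS-existence}. It imports the result from \cite{liu2023computing}, where it is formulated on $\mathbb{R}^d$, and only asserts that it carries over to $D$ after minor modifications. Your proposal is a self-contained version of those modifications, so the comparison is mainly about what boundedness of $D$ buys you.

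\textbf{Where boundedness of $D$ enters.} Your part (i) uses the same H\"older device as the paper's Lemma~\ref{lem:uniform-bound-M}: finite measure of $D$ turns $\|\phi\|_{L^{p+1}}^{p+1}$ into superquadratic control of $\|\phi\|_{L^2}^2$. It also delivers the sublevel bound of that lemma for free. Part (iii) uses $D$ twice more: through Rellich compactness, and through the boundedness of the weights $x_j$, which makes $L_\Omega$ weakly continuous. Your second alternative for $L_\Omega$ suffices and is the simpler one. On $\mathbb{R}^d$ the trap replaces these ingredients. The lower bound comes from a pointwise estimate such as $(V+\omega)s^2+\frac{2\beta}{p+1}s^{p+1}\ge -C(V+\omega)_-^{(p+1)/(p-1)}$, with $V$ replaced by $V_{\Omega,\delta}$ from the proof of Lemma~\ref{lem:uniform-bound-M} when $\Omega\neq0$. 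This is integrable because $(V+\omega)_-$ has compact support, and compactness comes from the confining potential.

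\textbf{Identifying the two infima.} You obtain $\inf_X S_{\Omega,\omega}=\inf_{\mathcal M}S_{\Omega,\omega}$ a posteriori. You use existence of a global minimizer, the bound $S_{\Omega,\omega}(\phi_g)<0$, and the Nehari identity $\frac{\mathrm d}{\mathrm ds}S_{\Omega,\omega}(s\phi_g)\big|_{s=1}=2K_{\Omega,\omega}(\phi_g)$. The fibering computation you already set up in part (ii) gives this equality with no existence or compactness. If $a(\phi)<0$, then $s\mapsto s^2a(\phi)+\frac{2\beta}{p+1}s^{p+1}b(\phi)$ is minimized on $(0,\infty)$ exactly at the Nehari point $s_\phi$, so $S_{\Omega,\omega}(\phi)\ge\inf_{\mathcal M}S_{\Omega,\omega}$. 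If $a(\phi)\ge0$, then $S_{\Omega,\omega}(\phi)\ge0$. That version separates the equivalence of the two formulations from the existence question, which is useful on unbounded domains. Yours is shorter once compactness on $D$ is available.
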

The statements above are stated in \cite{liu2023computing} for the whole space $\mathbb{R}^{d}$, which hold here also for the case of bounded domain $D$ under subtle modifications. Moreover, we can have a boundedness result in $X$.

\begin{lemma}[Boundedness of sublevel set $S_{\leq m}$ in $H^1$]\label{lem:uniform-bound-M}
Let Assumption~\ref{assump:potential-condition} hold and $\omega < -\lambda_0$.
Then for any $m\geq 0$, there exists some $M>0$ such that for all $\phi \in S_{\leq m}$, we have $\left\| \phi \right\|_{H^{1}} \leq M$.
\end{lemma}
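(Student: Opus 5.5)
The plan is to bound $S_{\Omega,\omega}$ from below by a coercive expression in $\|\phi\|_{H^1}$. The key point is that in the defocusing case the nonlinear term $\frac{2\beta}{p+1}\|\phi\|_{L^{p+1}}^{p+1}$ is positive and grows superquadratically. It can therefore absorb the possibly negative quadratic contribution from $\omega\|\phi\|_{L^2}^2$. Note that $\omega<-\lambda_0$ is typically very negative, so the quadratic part of $S_{\Omega,\omega}$ is indefinite.

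\emph{Step 1: coercivity of the linear part.} I first show that the quadratic form
\[
Q(\phi):=\tfrac12\|\nabla\phi\|_{L^2}^2+\int_D V|\phi|^2\,\mathrm d\mathbf x+L_\Omega(\phi)
\]
controls the gradient: $Q(\phi)\ge c_0\|\nabla\phi\|_{L^2}^2 - C_0\|\phi\|_{L^2}^2$ for some $c_0>0$.

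In case (a) this is immediate, since $L_\Omega\equiv 0$ and $V\ge 0$; take $c_0=\tfrac12$, $C_0=0$.

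In case (b), apply Lemma~\ref{lem:standard-inequalities}(iii) with $c=1-\epsilon$, where $\epsilon$ is chosen so that $\Omega^2/(1-\epsilon)<\min\{\gamma_1^2,\gamma_2^2\}$. This gives
\[
Q(\phi)\ge \tfrac{\epsilon}{2}\|\nabla\phi\|^2_{L^2}+\int_D\Bigl(V-\tfrac{\Omega^2}{2(1-\epsilon)}(x_1^2+x_2^2)\Bigr)|\phi|^2\,\mathrm d\mathbf x .
\]
The bracketed weight is nonnegative. Since $V\in L^\infty(D)$ on the bounded domain, a crude bound $C_0=\|V\|_{L^\infty}+\ldots$ would also suffice.

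\emph{Step 2: absorb the $L^2$ term.} From Step 1,
\[
S_{\Omega,\omega}(\phi)\ge c_0\|\nabla\phi\|_{L^2}^2-(C_0+|\omega|)\|\phi\|_{L^2}^2+\tfrac{2\beta}{p+1}\|\phi\|_{L^{p+1}}^{p+1}.
\]
Because $D$ is bounded, Hölder's inequality gives $\|\phi\|_{L^2}^2\le |D|^{\frac{p-1}{p+1}}\|\phi\|_{L^{p+1}}^2$. Set $A=C_0+|\omega|+1$. Since $p+1>2$, Young's inequality yields
\[
A\|\phi\|_{L^2}^2\le A|D|^{\frac{p-1}{p+1}}\|\phi\|_{L^{p+1}}^2\le \tfrac{\beta}{p+1}\|\phi\|_{L^{p+1}}^{p+1}+C_1
\]
with $C_1$ depending only on $\beta,p,|D|,A$. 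Therefore
\[
S_{\Omega,\omega}(\phi)\ge c_0\|\nabla\phi\|_{L^2}^2+\|\phi\|_{L^2}^2-C_1 .
\]
Here I used $(C_0+|\omega|)\|\phi\|^2_{L^2}=A\|\phi\|^2_{L^2}-\|\phi\|^2_{L^2}$ and dropped the leftover nonnegative $L^{p+1}$ term.

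\emph{Step 3: conclude.} For $\phi\in S_{\le m}$, Step 2 gives $\min\{c_0,1\}\|\phi\|_{H^1}^2\le m+C_1$. Hence the claim holds with
\[
M=\sqrt{(m+C_1)/\min\{c_0,1\}} .
\]
This bound is independent of $\phi$, as required. As a byproduct it also recovers Proposition~\ref{lem:GS-existence}(i).

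\emph{Main obstacle.} The main obstacle is Step 1 in case (b): the rotation term must not destroy control of $\|\nabla\phi\|_{L^2}$, which requires choosing the splitting constant $c$ in Lemma~\ref{lem:standard-inequalities}(iii) strictly less than $1$ using $\Omega<\min\{\gamma_1,\gamma_2\}$. Step 2 depends essentially on $\beta>0$ and on the boundedness of $D$. On $\mathbb R^d$ one would instead split the $L^2$ mass using the confining potential, but that is not needed here.
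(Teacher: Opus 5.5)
Your proof is correct and follows essentially the paper's route. You control the gradient via Lemma~\ref{lem:standard-inequalities}(iii) with a splitting constant below one, which keeps the effective potential nonnegative in case (b), and then use H\"older on the bounded domain so the superquadratic defocusing term dominates the negative $L^2$ contribution. The only differences are packaging: you fold everything into one coercive estimate $S_{\Omega,\omega}(\phi)\ge c_0\|\nabla\phi\|_{L^2}^2+\|\phi\|_{L^2}^2-C_1$ via Young, whereas the paper first bounds $\|\phi\|_{L^2}$ and then $\|\nabla\phi\|_{L^2}$. As you note, this particular bound needs neither $\omega<-\lambda_0$ nor, on bounded $D$, the condition $\Omega<\min\{\gamma_1,\gamma_2\}$.
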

\begin{proof}
	First, we consider case (a) of  Assumption~\ref{assump:potential-condition}. For any $\phi \in S_{\leq m}$, by using H\"older's inequality, we have for some constant $C>0$,
	\begin{align*}
		m \ge S_{\Omega, \omega} (\phi) 
        \geq (\lambda_0+\omega)\|\phi\|_{L^2}^2+\frac{2\beta}{p+1}\|\phi\|_{L^{p+1}}^{p+1}
        \geq (\lambda_0+\omega)\left\| \phi \right\|_{L^{2}}^{2} + C \beta \left\| \phi \right\|_{L^{2}}^{p+1},
	\end{align*}
	which implies straightforwardly the uniform boundedness of $\|\phi\|_{L^{2}}$ due to $p>1$ and $C\beta>0$. 
    Then, using the fact that $S_{\Omega,\omega}(\phi)\le m$ with $\Omega=0$ and $\beta>0$ gives the uniform bound for $\left\| \nabla\phi \right\|_{L^{2}}$ and $\left\| \phi \right\|_{H^{1}}$. 
	
	We now consider case (b) of Assumption~\ref{assump:potential-condition}. From the result of Lemma~\ref{lem:standard-inequalities} \textit{(iii)}, we find
	\begin{align*}
		m \ge&\, S_{\Omega, \omega} (\phi) \ge \frac{1-\delta}{2} \left\| \nabla\phi \right\|_{L^{2}}^{2} + \int_{D}{ \left( (V + \omega)\left| \phi \right|^{2} +  \frac{2\beta}{p+1}|\phi|^{p+1} - \frac{|\Omega|^{2}}{2\delta}\left( x_{1}^{2} + x_{2}^{2} \right)|\phi|^{2} \right) }\,\text{d}\mathbf{x}  \\
		\ge&\, \frac{1-\delta}{2} \left\| \nabla\phi \right\|_{L^{2}}^{2} + \int_{D}{ \left( (V_{\Omega, \delta} + \omega)\left| \phi \right|^{2} +  \frac{2\beta}{p+1}|\phi|^{p+1} \right) }\,\text{d}\mathbf{x},
	\end{align*}
	with $V_{\Omega, \delta} = \frac{1}{2}\sum_{i=1}^{d}\gamma_{i}^{2}x_{i}^{2} - \frac{|\Omega|^{2}}{2\delta}\left( x_{1}^{2} + x_{2}^{2} \right)$. Choosing $\left( |\Omega| / \min\{\gamma_{1},\gamma_{2}\} \right)^{2}< \delta < 1$, we have $V_{\Omega,\delta}(\mathbf{x}) \ge 0$ and then the assertion can be proved in the same manner for case (a).
\end{proof}

Proposition~\ref{lem:GS-existence} indicates that the defocusing action GS can be obtained by minimizing the action functional $S_{\Omega, \omega}$ directly over the entire space $X$, without resorting to the Nehari manifold constraint \eqref{eq:action-gs-def-org}. Consequently, the problem reduces to an unconstrained minimization of \eqref{eq:action-functional-S}, which can be approached via a direct gradient flow (DGF):
\begin{align}
	\partial_{t}\phi = - \frac{\delta S_{\Omega, \omega}}{\delta \overline{\phi}} =  - \left( -\frac{1}{2}\Delta + V + \beta |\phi|^{p-1} - \Omega L_{z} + \omega  \right) \phi := -H_{\phi}\phi, \quad t\geq 0,  \label{eq:GF-continuous}
\end{align}
starting with an initial guess $\phi(\cdot,0) = \phi_{0} \in X\setminus\{ 0\}$. In such a way, it is straightforward to deduce from \eqref{eq:GF-continuous} that
\begin{align*}
	\frac{{\rm d}}{{\rm d} t} S_{\Omega, \omega}(\phi(\cdot,t)) = -2\|\partial_{t} \phi(\cdot,t)\|_{L^{2}}^{2} \le 0,\quad \forall t \geq 0.
\end{align*}
Various discretization techniques can be applied to the gradient flow \eqref{eq:GF-continuous}, and \cite{liu2023computing} suggests the following backward-forward Euler scheme: with the initial guess $\phi^{0} \in X$,
\begin{subequations}\label{eq:GF-semi-discretization}
	\begin{align}
		&\frac{1}{\tau}(\phi^{n+1}-\phi^{n}) = -\mu^{n+1}, \quad n\geq0,\label{eq_a:GF-semi-discretization} \\ 
		&\mu^{n+1} = \left( -\frac{1}{2}\Delta + \alpha \right)\phi^{n+1} + \left( V + \omega + \beta |\phi^{n}|^{p-1} - \Omega L_{z} - \alpha  \right) \phi^{n}, \label{eq_b:GF-semi-discretization}
	\end{align}
\end{subequations}
where $\tau  > 0$ is the time step and $\alpha \geq 0$ is a stabilization parameter chosen to enhance stability performance. 

In \cite{liu2023computing}, by choosing $\alpha$ properly, the monotone decay of the action functional was numerically observed but not rigorously established, where only a modified action was proved to decay during time stepping. We will fill the gap in the following.

\subsection{Stability in action functional}
In order to study the stability of $S_{\Omega,\omega}$ associated with the DGF scheme \eqref{eq:GF-semi-discretization}, we first derive several a priori bounds for $\phi^{n+1}$. Rearranging \eqref{eq:GF-semi-discretization}, we obtain
\begin{align}
	\mathcal{A}\phi^{n+1}
    = \left( 1+ \tau\alpha \right) \phi^{n} - \tau g(\phi^{n}), \label{eq:I-rearrange}
\end{align}
where $\mathcal{A} :=\left( 1 + \tau\alpha \right) I - \frac{\tau}{2}\Delta$ and $g(\phi^{n}):=\left( V + \omega + \beta |\phi^{n}|^{p-1} - \Omega L_{z} \right)\phi^{n}$.

Throughout this paper, under a given $\phi^{0}$, we set $S_{\Omega,\omega}(\phi^{0}) \leq m$ for a constant $m \ge 0$.

\begin{lemma}\label{lem:phinplus1-bound-by-phin}
Let Assumption~\ref{assump:potential-condition} hold. If $\alpha\geq\frac{1}{8}$ and $\phi^{n}\in X$, then for some $C_{1}, C_{2} > 0$,
\begin{align}
	\left\| \phi^{n+1} \right\|_{H^{1}} 
	\leq C_1\left\| \phi^{n} \right\|_{H^{1}} + C_2 \left\| \phi^{n} \right\|_{H^{1}}^p. \label{eq:phinplus1-bound-by-phin}
\end{align}
\end{lemma}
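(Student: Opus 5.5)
We work directly from the rearranged form \eqref{eq:I-rearrange}, $\mathcal{A}\phi^{n+1} = (1+\tau\alpha)\phi^{n} - \tau g(\phi^{n})$. The argument is an energy estimate: test this identity against $\phi^{n+1}$, which is admissible under either boundary condition, and take real parts.

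The left-hand side then gives
\[
(1+\tau\alpha)\|\phi^{n+1}\|_{L^2}^2+\frac{\tau}{2}\|\nabla\phi^{n+1}\|_{L^2}^2 \ge c_\tau\|\phi^{n+1}\|_{H^1}^2,
\qquad c_\tau=\min\{1+\tau\alpha,\tau/2\}.
\]
The right-hand side is a duality pairing, bounded by $\|(1+\tau\alpha)\phi^n-\tau g(\phi^n)\|_{H^{-1}}\,\|\phi^{n+1}\|_{H^1}$. Dividing through reduces the statement to showing
\[
\|g(\phi^n)\|_{H^{-1}}\lesssim \|\phi^n\|_{H^1}+\|\phi^n\|_{H^1}^p .
\]
We also need $\|\phi^n\|_{H^{-1}}\le\|\phi^n\|_{L^2}\le\|\phi^n\|_{H^1}$, which is immediate.

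We bound the pieces of $g(\phi^n)$ one at a time.
\begin{enumerate}[label=(\roman*)]
\item Since $V\in L^\infty(D)$, the linear part satisfies $\|(V+\omega)\phi^n\|_{L^2}\le(\|V\|_{L^\infty}+|\omega|)\|\phi^n\|_{H^1}$.
\item For the rotation term, $L_z\phi^n$ is a first-order derivative with coefficients $x_1,x_2$, which are bounded on the bounded domain $D$. Hence $\|\Omega L_z\phi^n\|_{L^2}\le |\Omega|\sup_D|\mathbf{x}|\,\|\nabla\phi^n\|_{L^2}$.
\item For the nonlinear term we use duality: $|\langle|\phi^n|^{p-1}\phi^n,\psi\rangle|\le\|\phi^n\|_{L^{p+1}}^p\|\psi\|_{L^{p+1}}$ by Hölder with exponents $\frac{p+1}{p}$ and $p+1$. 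Lemma~\ref{lem:standard-inequalities}(i) applied twice then gives $\|\beta|\phi^n|^{p-1}\phi^n\|_{H^{-1}}\le\beta C_E^{p+1}\|\phi^n\|_{H^1}^p$. The subcritical range in Assumption~\ref{assump:potential-condition} is what makes the embedding available here.
\end{enumerate}
Collecting the three bounds yields \eqref{eq:phinplus1-bound-by-phin} with
\[
C_1=c_\tau^{-1}\bigl(1+\tau\alpha+\tau(\|V\|_{L^\infty}+|\omega|+|\Omega|\sup_D|\mathbf{x}|)\bigr),
\qquad C_2=c_\tau^{-1}\tau\beta C_E^{p+1}.
\]

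The main obstacle is the role of the hypothesis $\alpha\ge\frac18$, since the estimate above does not need it. The paper presumably wants constants uniform in $\tau$, or at least a cleaner form. To use the hypothesis, I would first test against $\phi^{n+1}$ but keep the $L^2$ and gradient parts separate. Then I would absorb the derivative-bearing term $\tau\langle\Omega L_z\phi^n,\phi^{n+1}\rangle$ by integrating by parts onto $\phi^{n+1}$ and applying Young's inequality. This splits it as $\frac{\tau}{4}\|\nabla\phi^{n+1}\|^2$ plus a weighted $L^2$ term, and $\alpha\ge\frac18$ should be exactly what lets the $\tau\alpha\|\phi^{n+1}\|^2$ term dominate the resulting lower-order contributions. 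If that balancing fails, the fallback is the $\tau$-dependent constants above, which already prove the lemma as stated, since $C_1,C_2$ are only required to be independent of $n$.
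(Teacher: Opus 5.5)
\textbf{Gap: your constants depend on $\tau$ and $\alpha$.} Your energy estimate is correct, but only for a bound whose constants depend on $\tau$ and $\alpha$. For $\alpha\ge\frac12$ one has $c_\tau=\tau/2$ for every $\tau$, so your constant is $C_1=2\tau^{-1}+2\alpha+2K$ with $K=\|V\|_{L^\infty}+|\omega|+|\Omega|\sup_D|\mathbf{x}|$. This blows up as $\tau\to0$ and grows with $\alpha$.

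That is not what the lemma has to deliver. By the paper's convention, the constants behind $\lesssim$ in its proof are independent of $\tau$. Moreover, $C_1,C_2$ feed into $C_M=C_1M+C_2M^p$ and hence into $\alpha_M$. Theorem~\ref{thm:action_decaying} fixes $\alpha\ge\alpha_M+\frac18$ once and then claims decay for \emph{every} $\tau>0$. With your constants, $\alpha_M$ would depend on $\tau$ and on $\alpha$ itself, at least of order $\alpha^{(p-1)/(1-\theta)}$. The step-size condition then becomes circular and the unconditional stability is lost. So your fallback ``only independence of $n$ is needed'' is not adequate. The fact that the hypothesis $\alpha\ge\frac18$ plays no role in your argument is the symptom.

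\textbf{Your repair targets the wrong term.} The obstruction is not $\Omega L_z\phi^n$ but the linear term $(1+\tau\alpha)\langle\phi^n,\phi^{n+1}\rangle$. An $L^2$ test only sees $\|\phi^n\|_{L^2}$ (or $\|\phi^n\|_{H^{-1}}$), so $\nabla\phi^{n+1}$ is controlled only through the coefficient $\tau/2$. No Young's inequality can fix this, because $(1+\tau\alpha)\mathcal{A}^{-1}=(I-c\Delta)^{-1}$, with $c=\tau/(2(1+\tau\alpha))$, really has $L^2\to H^1$ norm of order $\tau^{-1/2}$ as $\tau\to0$.

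\textbf{The missing idea: split off the linear part and use $H^1$ information on $\phi^n$.} The paper writes $\phi^{n+1}=\phi_1-\tau\phi_2$ with $\mathcal{A}\phi_1=(1+\tau\alpha)\phi^n$ and $\mathcal{A}\phi_2=g(\phi^n)$.
\begin{enumerate}
\item For $\phi_1$: write $\phi^n=\phi_1-c\Delta\phi_1$ and expand $\|\phi^n\|_{H^1}^2$. This gives $\|\phi_1\|_{H^1}\le\|\phi^n\|_{H^1}$ with constant $1$.
\item For $\phi_2$: your energy estimate is exactly right, and this is where $\alpha\ge\frac18$ enters. One has $(1+\tau\alpha)\|\phi_2\|_{L^2}^2+\frac{\tau}{2}\|\nabla\phi_2\|_{L^2}^2\ge\frac{\tau}{8}\|\phi_2\|_{H^1}^2$, hence $\tau\|\phi_2\|_{H^1}\le 8\|g(\phi^n)\|_{H^{-1}}$.
\end{enumerate}
Your bounds (i)--(iii) on $\|g(\phi^n)\|_{H^{-1}}$ then yield $C_1,C_2$ independent of both $\tau$ and $\alpha$.
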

\begin{proof}
By \eqref{eq:I-rearrange}, we can write $\phi^{n+1}=\phi_1-\tau\phi_2$, where $\phi_1$ and $\phi_2$ are determined, respectively, by the elliptic equations $\mathcal{A}\phi_1=\left( 1 + \tau\alpha \right) \phi^{n}$ and $\mathcal{A}\phi_2=g(\phi^{n})$ (with the same boundary conditions as in \eqref{eq:GF-semi-discretization}). 
On the one hand, noting that $\phi^n=\left(1 + \tau\alpha \right)^{-1}\mathcal{A}\phi_1=\phi_1-c\Delta\phi_1$ with $c:=\frac{\tau}{2(1+\tau\alpha)}>0$, one has
\begin{align*}
    \left\| \phi^{n} \right\|_{H^{1}}^2
    =\left\| \phi_1-c\Delta\phi_1\right\|_{H^{1}}^2
    =\left\| \phi_1\right\|_{H^{1}}^2+2c\left(\left\| \nabla\phi_1\right\|_{L^{2}}^2+\left\| \Delta\phi_1\right\|_{L^{2}}^2\right) + c^2\left\|\Delta\phi_1\right\|_{H^{1}}^2
    \geq \left\| \phi_1\right\|_{H^{1}}^2.
\end{align*}
On the other hand, taking the $L^2$ inner product of $\mathcal{A}\phi_2=g(\phi^{n})$ with $\phi_2$ gives
\begin{align*}
    \left( 1 + \tau\alpha \right)\left\| \phi_2\right\|_{L^{2}}^2+\frac{\tau}{2}\left\|\nabla \phi_2\right\|_{L^{2}}^2=
    \left\langle \mathcal{A}\phi_2,\phi_2\right\rangle =\left\langle g(\phi^n),\phi_2\right\rangle 
    \lesssim \left(\|\phi^n\|_{L^2}+\beta\,\|\phi^n\|_{L^{p+1}}^p\right)\|\phi_2\|_{H^1}.
\end{align*}
Since $\alpha\geq \frac{1}{8}$, one gets $\tau\left\| \phi_2\right\|_{H^{1}}\lesssim \|\phi^n\|_{L^2}+\beta \|\phi^n\|_{L^{p+1}}^p\lesssim \|\phi^n\|_{H^1}+\beta \|\phi^n\|_{H^{1}}^p$. 
Consequently, 
\begin{align*}
	\left\| \phi^{n+1} \right\|_{H^{1}} \leq \left\|\phi_1\right\|_{H^{1}} + \tau \left\|\phi_2\right\|_{H^{1}}
    \lesssim \left\| \phi^{n} \right\|_{H^{1}} + \left\| \phi^{n} \right\|_{H^{1}}^p.
\end{align*}
This completes the proof.
\end{proof}
According to Lemmas~\ref{lem:uniform-bound-M} and \ref{lem:phinplus1-bound-by-phin}, if $\phi^{n}\in S_{\leq m}$, then $\left\| \phi^{n} \right\|_{H^{1}} \leq M$ and
\begin{align}
	\left\| \phi^{n+1} \right\|_{H^{1}} 
    \leq C_1M+C_2M^p=:C_M. \label{eq:phi-bound-by-CM}
\end{align}
Now, we define for $d=1$ or $\Omega = 0$,
\begin{align*}
	\alpha_{M} = \frac{1}{2}\max\Big\{0,\ \mathop{\mathrm{ess\,sup}}_{\mathbf{x}\in D}\big( V(\mathbf{x})+\omega \big)\Big\} + \frac{1}{2}(1-\theta)(8\theta)^{\frac{\theta}{1-\theta}} \left( \beta\,p\, C_{E}^{p-1}C_M^{p-1}C_{G}^2 \right)^{\frac{1}{1-\theta}},
\end{align*}
otherwise, 
\begin{align*}
	\alpha_{M} = \frac{1}{2}\max\Big\{0,\ \mathop{\mathrm{ess\,sup}}_{\mathbf{x}\in D}\big( V(\mathbf{x})+\omega + 2|\Omega|^2(x_{1}^{2} + x_{2}^{2}) \big)\Big\} +\frac{1}{2}(1-\theta)(8\theta)^{\frac{\theta}{1-\theta}} \left( \beta\,p\, C_{E}^{p-1}C_M^{p-1}C_{G}^2 \right)^{\frac{1}{1-\theta}}.
\end{align*}
The action-functional stability result then reads as follows.

\begin{theorem}[Unconditional decay of action]\label{thm:action_decaying}
	Let Assumption~\ref{assump:potential-condition} hold. For a given $\phi^{0}\in S_{\leq m}$, suppose that $\phi^{n}\in X$ and choose $\alpha \geq \alpha_{M} + \frac{1}{8}$. Then, the DGF scheme \eqref{eq:GF-semi-discretization} possesses the unconditional decaying property for the action functional $S_{\Omega, \omega}$: $\forall\tau>0$,
	\begin{align}
		S_{\Omega, \omega}(\phi^{n+1}) - S_{\Omega, \omega}(\phi^{n}) \le -2\tau\left\| \mu^{n+1} \right\|_{L^{2}}^{2} - \frac{\tau^{2}}{4}\left\| \mu^{n+1} \right\|_{H^{1}}^{2}. \label{eq:S_decaying}
	\end{align}
\end{theorem}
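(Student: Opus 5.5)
We argue by induction on $n$, carrying along the property $\phi^{n}\in S_{\leq m}$. This holds for $n=0$ by assumption, and for later $n$ it follows from \eqref{eq:S_decaying} at the previous step, since the right-hand side there is non-positive. Hence $\|\phi^n\|_{H^1}\le M$ by Lemma~\ref{lem:uniform-bound-M}, and $\|\phi^{n+1}\|_{H^1}\le C_M$ by \eqref{eq:phi-bound-by-CM}; this is exactly why $\alpha_M$ is defined through $C_M$. Write $\delta:=\phi^{n+1}-\phi^n=-\tau\mu^{n+1}$ and split
\[
S_{\Omega,\omega}=\tfrac12\|\nabla\phi\|_{L^2}^2+Q(\phi)+N(\phi),\qquad Q(\phi)=\int_D (V+\omega)|\phi|^2\,\mathrm d\mathbf x+L_\Omega(\phi),\qquad N(\phi)=\tfrac{2\beta}{p+1}\|\phi\|_{L^{p+1}}^{p+1}.
\]
Take the real $L^2$ inner product of \eqref{eq_b:GF-semi-discretization} with $2\delta$. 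The identity $2\langle a,a-b\rangle=|a|^2-|b|^2+|a-b|^2$ applied to the implicit part gives
\[
\tfrac12\|\nabla\phi^{n+1}\|^2-\tfrac12\|\nabla\phi^n\|^2+\tfrac12\|\nabla\delta\|^2+2\alpha\|\delta\|^2 .
\]
Since $Q$ is a real quadratic form ($-\Omega L_z$ is self-adjoint), the explicit linear part gives $2\langle (V+\omega-\Omega L_z)\phi^n,\delta\rangle=Q(\phi^{n+1})-Q(\phi^n)-Q(\delta)$. The left side equals $2\langle\mu^{n+1},\delta\rangle=-2\tau\|\mu^{n+1}\|^2$. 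Rearranging,
\[
S(\phi^{n+1})-S(\phi^n)=-2\tau\|\mu^{n+1}\|_{L^2}^2-\tfrac12\|\nabla\delta\|^2-2\alpha\|\delta\|^2+Q(\delta)+R,
\]
\[
R:=N(\phi^{n+1})-N(\phi^n)-2\beta\langle|\phi^n|^{p-1}\phi^n,\delta\rangle .
\]

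Next we bound $Q(\delta)$ and $R$. For $Q(\delta)$, Lemma~\ref{lem:standard-inequalities}(iii) with $c=1/4$ (in case (b)) gives
\[
Q(\delta)\le\tfrac18\|\nabla\delta\|^2+\int_D\bigl(V+\omega+2|\Omega|^2(x_1^2+x_2^2)\bigr)|\delta|^2\,\mathrm d\mathbf x ,
\]
so the potential part is absorbed by $2\cdot\tfrac12\max\{0,\operatorname{ess\,sup}(\cdot)\}\|\delta\|^2$, i.e.\ by the first term of $\alpha_M$. For $R$, Taylor's formula applied to $N$, whose second derivative is bounded by $2\beta p|\xi|^{p-1}$, gives
\[
R\le\beta p\int_0^1\!\!\int_D|\phi^n+s\delta|^{p-1}|\delta|^2\,\mathrm d\mathbf x\,\mathrm ds .
\]
By Hölder with exponents $\frac{p+1}{p-1}$ and $\frac{p+1}{2}$, then the embedding for $\|\phi^n+s\delta\|_{L^{p+1}}\le C_E\max(M,C_M)\le C_EC_M$ (taking $C_1\ge1$ so that $C_M\ge M$; the $L^{p+1}$ norm is convex along the segment), and Gagliardo--Nirenberg for $\|\delta\|_{L^{p+1}}^2$, we get
\[
R\le\beta pC_E^{p-1}C_M^{p-1}C_G^2\|\delta\|^{2(1-\theta)}\|\nabla\delta\|^{2\theta}.
\]
Young's inequality with exponents $1/\theta$ and $1/(1-\theta)$ then gives $R\le\tfrac18\|\nabla\delta\|^2+K\|\delta\|^2$, where $K$ is precisely the second term of $2\alpha_M$; the constant $(1-\theta)(8\theta)^{\theta/(1-\theta)}(\cdots)^{1/(1-\theta)}$ matches the Young weighting. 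This Hölder–GN–Young chain, together with controlling $\phi^{n+1}$ a priori, is the main obstacle, and it is the reason for the induction.

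Collecting terms,
\[
S(\phi^{n+1})-S(\phi^n)\le-2\tau\|\mu^{n+1}\|^2-\tfrac14\|\nabla\delta\|^2-2(\alpha-\alpha_M)\|\delta\|^2 .
\]
Since $\alpha-\alpha_M\ge\frac18$, we have $2(\alpha-\alpha_M)\ge\frac14$, so the last two terms are bounded by $-\frac14\|\delta\|_{H^1}^2=-\frac{\tau^2}{4}\|\mu^{n+1}\|_{H^1}^2$. This is \eqref{eq:S_decaying} for every $\tau>0$, and it closes the induction.
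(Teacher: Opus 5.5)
Your proof is correct and follows the paper's argument essentially step for step: induction keeping $\phi^n\in S_{\leq m}$ so that $\|\phi^{n+1}\|_{H^1}\le C_M$, the identity obtained by testing the scheme with the increment, Lemma~\ref{lem:standard-inequalities}(iii) with $c=1/4$ for $Q$, and Taylor--H\"older--Gagliardo--Nirenberg--Young for $R$ with the same $\varepsilon$. Your remark that one may take $C_1\ge 1$ to guarantee $C_M\ge M$ covers a point the paper glosses over.

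There is one small slip. The integral Taylor remainder carries the weight $(1-s)$, so your displayed bound $R\le\beta p\int_0^1\!\int_D|\phi^n+s\delta|^{p-1}|\delta|^2\,\mathrm{d}\mathbf{x}\,\mathrm{d}s$ is not valid as written. For $\phi^{n+1}=0$ one gets $R=\frac{2\beta p}{p+1}\|\phi^n\|_{L^{p+1}}^{p+1}$, which exceeds the right-hand side $\beta\|\phi^n\|_{L^{p+1}}^{p+1}$. The correct estimate is $R\le 2\beta p\int_0^1(1-s)\int_D|\phi^n+s\delta|^{p-1}|\delta|^2\,\mathrm{d}\mathbf{x}\,\mathrm{d}s\le\beta p\max_{s\in[0,1]}\int_D|\phi^n+s\delta|^{p-1}|\delta|^2\,\mathrm{d}\mathbf{x}$. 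This is exactly what your subsequent uniform-in-$s$ bounds use, so the conclusion is unaffected.
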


\begin{proof}
	By Lemma~\ref{lem:uniform-bound-M}, we have $\left\| \phi^{0} \right\|_{H^{1}} \leq M$. We move on in an induction manner. Suppose that
	\begin{align}
		S_{\Omega, \omega}(\phi^{k}) \leq m, \quad 0 \leq k \leq n, \label{eq:induction-assum}
	\end{align}
	which implies $\left\| \phi^{k} \right\|_{H^{1}} \leq M$ for all $0 \leq k \leq n$, and by \eqref{eq:phi-bound-by-CM} we have $\left\| \phi^{n+1} \right\|_{H^{1}} \leq C_{M}$. 
Taking the real $L^{2}$ inner products of \eqref{eq_a:GF-semi-discretization} and \eqref{eq_b:GF-semi-discretization} with $\tau\mu^{n+1}$ and $\phi^{n+1} - \phi^{n}$, respectively, yields
\begin{align*}
    -2\left(\tau+\alpha\tau^2\right)\left\|\mu^{n+1}\right\|_{L^2}^2
    &=\frac{1}{2}\left\|\nabla\phi^{n+1}\right\|_{L^2}^2-\frac{1}{2}\left\|\nabla\phi^{n}\right\|_{L^2}^2+\frac{1}{2}\left\|\nabla\left(\phi^{n+1}-\phi^n\right)\right\|_{L^2}^2 \\
    &\quad\; +\left\langle V+\omega,\, \left|\phi^{n+1}\right|^2-\left|\phi^{n}\right|^2-\left|\phi^{n+1}-\phi^n\right|^2 \right\rangle_{L^2(D)} \\
    &\quad\; +\, L_{\Omega}\left(\phi^{n+1}\right)-L_{\Omega}\left(\phi^{n}\right)-L_{\Omega}\left(\phi^{n+1}-\phi^{n}\right) \\
    &\quad\; +\, 2\beta \left\langle \left|\phi^{n}\right|^{p-1}\phi^n, \phi^{n+1}-\phi^n\right\rangle_{L^2(D)}.
\end{align*}
It follows that
\begin{align}
    S_{\Omega, \omega}(\phi^{n+1}) - S_{\Omega, \omega}(\phi^{n}) 
    =-2\left(\tau+\alpha\tau^2\right)\left\|\mu^{n+1}\right\|_{L^2}^2
    -\frac{\tau^2}{2}\left\|\nabla\mu^{n+1}\right\|_{L^2}^2
    +Q^{n+1}+R^{n+1},
    \label{eq:Somega-gap-2}
\end{align}
where
\begin{align*}
Q^{n+1} &:=\left\langle V+\omega,\,\left|\phi^{n+1}-\phi^n\right|^2 \right\rangle_{L^2(D)}+L_{\Omega}\left( \phi^{n+1} - \phi^{n} \right), \\
    R^{n+1} &:= 2\beta\int_D\left[ \frac{1}{p+1}\left|\phi^{n+1}\right|^{p+1} - \frac{1}{p+1}\left|\phi^{n}\right|^{p+1} - \mathrm{Re}\left( \left|\phi^{n}\right|^{p-1}\phi^{n}\overline{\left(\phi^{n+1}-\phi^n\right)}\right) \right]\mathrm{d}\mathbf{x}.
\end{align*}
Next, we estimate $Q^{n+1}$ and $R^{n+1}$ separately. For $Q^{n+1}$, from the statement \textit{(iii)} of Lemma~\ref{lem:standard-inequalities}, we have
\begin{align*}
	Q^{n+1} 
    &\leq \frac{\tau^2}{8} \left\| \nabla\mu^{n+1} \right\|_{L^{2}}^{2} + \tau^2\int_{D}\left(V(\mathbf{x})+\omega+2|\Omega|^{2}\left(x_{1}^{2} + x_{2}^{2}\right)\right)\left|\mu^{n+1}\right|^2 \mathrm{d}\mathbf{x} \\
    &\leq \frac{\tau^2}{8} \left\| \nabla\mu^{n+1} \right\|_{L^{2}}^{2} + \tau^2\max\Big\{0,\ \mathop{\mathrm{ess\,sup}}_{\mathbf{x}\in D}\Big(V(\mathbf{x})+\omega+2|\Omega|^{2}\left(x_{1}^{2} + x_{2}^{2}\right)\Big)\Big\} \left\| \mu^{n+1}\right\|_{L^{2}}^{2}.
\end{align*}
For $R^{n+1}$, denoting $f(\rho)=\frac{2\beta}{p+1}\int_D\left|\phi_\rho\right|^{p+1}\mathrm{d}\mathbf{x}$ with $\phi_\rho:=(1-\rho)\phi^n+\rho\phi^{n+1}$, $\rho\in[0,1]$, we have
\begin{align*}
    R^{n+1} 
    &=f(1)-f(0)-f'(0)=\int_0^1(1-\rho)f''(\rho)\mathrm{d}\rho \\
    &=\beta\int_0^1(1-\rho)\int_D\left[(p+1)\left|\phi_\rho\right|^{p-1}\left|\phi^{n+1}-\phi^n\right|^2+(p-1)\,\mathrm{Re}\Big(\left|\phi_\rho\right|^{p-3}\phi_\rho^2\overline{\left(\phi^{n+1}-\phi^n\right)}^2\Big)\right]\mathrm{d}\mathbf{x}\mathrm{d}\rho \\
    &\leq \tau^2\beta\,p\,\max_{\rho\in[0,1]}\left\|\phi_\rho\right\|_{L^{p+1}}^{p-1}\left\|\mu^{n+1}\right\|_{L^{p+1}}^2,
\end{align*}
By Lemma~\ref{lem:phinplus1-bound-by-phin} and noting that $C_M\geq M$, one has the following uniform bound for all $\rho\in[0,1]$:
\begin{align*}
\left\|\phi_\rho\right\|_{L^{p+1}}^{p-1}
\leq \max\left\{\left\|\phi^n\right\|_{L^{p+1}}^{p-1},\left\|\phi^{n+1}\right\|_{L^{p+1}}^{p-1}\right\}
\leq C_E^{p-1}\max\left\{\left\|\phi^n\right\|_{H^{1}}^{p-1},\left\|\phi^{n+1}\right\|_{H^{1}}^{p-1}\right\} 
\leq C_E^{p-1}C_M^{p-1}.
\end{align*}
Applying the Gagliardo-Nirenberg inequality and Young's inequality of the form $ ab\leq \frac{\varepsilon a^{p_1}}{p_1}+\varepsilon^{-\frac{p_2}{p_1}}\frac{b^{p_2}}{p_2}$, $\forall a,b\geq 0$, with $\frac{1}{p_1}+\frac{1}{p_2}=1$ and $\varepsilon>0$, we obtain
\begin{align*}
    R^{n+1} 
    &\leq \tau^2\beta\,p\,C_E^{p-1}C_M^{p-1}C_G^2\left\|\nabla\mu^{n+1}\right\|_{L^2}^{2\theta}\left\|\mu^{n+1}\right\|_{L^2}^{2-2\theta} \\
    &\leq \tau^2\beta\,p\,C_E^{p-1}C_M^{p-1}C_G^2\left(\theta\varepsilon\left\|\nabla\mu^{n+1}\right\|_{L^2}^{2}+(1-\theta)\varepsilon^{-\frac{\theta}{1-\theta}}\left\|\mu^{n+1}\right\|_{L^2}^{2}\right).
\end{align*}
Taking $\varepsilon = \left( 8\,\theta\, \beta\, p\, C_{E}^{p-1}C_{M}^{p-1} C_{G}^2\right)^{-1}$, we have
	\begin{align*}
		R^{n+1} \leq \frac{\tau^2}{8} \left\| \nabla \mu^{n+1} \right\|_{L^{2}}^{2} + \tau^2(1-\theta) \left( 8\theta \right)^{\frac{\theta}{1-\theta}}\left( \beta\, p\, C_{E}^{p-1}C_M^{p-1} C_{G}^2 \right)^{\frac{1}{1-\theta}} \left\| \mu^{n+1} \right\|_{L^{2}}^{2}.
	\end{align*}
Substituting the estimates of $Q^{n+1}$ and $R^{n+1}$ into \eqref{eq:Somega-gap-2} and using the definition of $\alpha_M$, we find
	\begin{align*}
		S_{\Omega, \omega}(\phi^{n+1}) - S_{\Omega, \omega}(\phi^{n}) 
        &= -2\left(\tau+\alpha\tau^2\right)\left\|\mu^{n+1}\right\|_{L^2}^2
    -\frac{\tau^2}{2}\left\|\nabla\mu^{n+1}\right\|_{L^2}^2+Q^{n+1}+R^{n+1} \\
        &\leq -2\tau\left\|\mu^{n+1}\right\|_{L^2}^2
    -\frac{\tau^2}{4}\left\|\nabla\mu^{n+1}\right\|_{L^2}^2 -2\tau^2\left(\alpha-\alpha_M\right)\left\|\mu^{n+1}\right\|_{L^2}^2.
	\end{align*}
    With the condition $\alpha\geq\alpha_M+\frac{1}{8}$  given  in Theorem~\ref{thm:action_decaying}, we can get 
	\begin{align*}
		S_{\Omega, \omega}(\phi^{n+1}) - S_{\Omega, \omega}(\phi^{n}) 
        \leq -2\tau\left\| \mu^{n+1} \right\|_{L^{2}}^{2} - \frac{\tau^{2}}{4}\left\| \mu^{n+1} \right\|_{H^{1}}^{2},
	\end{align*}
    which shows \eqref{eq:S_decaying}.
	The hypothesis \eqref{eq:induction-assum} further leads to $S_{\Omega, \omega}(\phi^{n+1}) \leq m$, closing the induction.
\end{proof}

\begin{remark}
	In practical computations, it might be difficult to precisely quantify the theoretical $\alpha_M$ in Theorem \ref{thm:action_decaying}, and the constant $1/8$ in Theorem~\ref{thm:action_decaying} is a technical factor required for the subsequent convergence-rate analysis. From the perspective of action descent alone, the condition $\alpha\geq \alpha_M+1/8$ is not strictly necessary. Motivated by \eqref{eq:Somega-gap-2} and the facts
	\begin{align*}
		\left| Q^{n+1} \right| &\leq \frac{\tau^2}{2} \left\| \nabla\mu^{n+1} \right\|_{L^{2}}^{2} + \tau^2\int_{D}\left(V(\mathbf{x})+\omega+\frac{|\Omega|^{2}}{2}\left(x_{1}^{2} + x_{2}^{2}\right)\right)\left|\mu^{n+1}\right|^2 \mathrm{d}\mathbf{x},  \\
		\left| R^{n+1} \right| & \leq \tau^2\beta\,p\int_{D}{ \max\left\{ \left|\phi^{n}\right|^{p-1}, \left|\phi^{n+1}\right|^{p-1} \right\} \left|\mu^{n+1}\right|^{2} }\,\mathrm{d}\mathbf{x} 
		\approx \tau^2\beta\,p\int_{D}{ \left|\phi^{n}\right|^{p-1} \left|\mu^{n+1}\right|^{2} }\,\mathrm{d}\mathbf{x}, 
	\end{align*}
    we consider adaptively selecting $\alpha$ at each iteration step as
	\begin{align}
		\alpha^{n} := \frac{1}{2}\max\left\{0,\ \mathop{\mathrm{ess\,sup}}_{\mathbf{x}\in D}\left( V(\mathbf{x})+\omega + \frac{|\Omega|^{2}}{2}(x_{1}^{2} + x_{2}^{2}) + \beta(p+2) |\phi^{n}|^{p-1} \right)\right\}. \label{eq:alpha_n}
	\end{align}
    In our numerical experiments, the value of $\alpha^{n}$ is typically much larger than $1/8$.
\end{remark}

\section{Convergence Analysis} \label{sec. 3}

This section analyzes the convergence of the DGF scheme \eqref{eq:GF-semi-discretization}, giving the main results of the work. We will first present a global convergence result and then establish a refined quantitative  convergence rate of DGF toward the GS.

\subsection{A global convergence result}

Our first result on convergence reads as follows.

\begin{theorem}[Accumulation point \& unconditional convergent subsequence]\label{thm:convergent-subsequences}
	For a given $\phi^{0}\in H^{2}(D)$, suppose that Assumption~\ref{assump:potential-condition} holds, $\omega < -\lambda_{0}$ and $\alpha \ge \alpha_{M} + \frac{1}{8}$. Additionally, assume for $d\geq 3$ that $1<p\leq d/(d-2)$. Then, for every fixed $\tau>0$:
	\begin{enumerate}[label=(\roman*)]
		\item 
		The solution $\left\{ \phi^{n} \right\}_{n=0}^{\infty}$ of \eqref{eq:GF-semi-discretization} has a strongly convergent subsequence $\left\{ \phi^{n_{j}} \right\}_{j=0}^{\infty}$ in $H^{1}(D)$;
		
		\item Every accumulation point $\phi^{\star}\in X$ of $\left\{ \phi^{n} \right\}_{n=0}^{\infty}$ in $H^{1}(D)$ satisfies the Euler-Lagrange equation of \eqref{eq:action-gs-def-org} in the weak sense, i.e.,
		\begin{align}\label{eq:Euler-LagrangeEq-weakform}
			\left\langle H_{\phi^{\star}}\phi^{\star}, v \right\rangle = 0, \quad \forall v\in H^{1}(D),
		\end{align}
		and all accumulation points possess the same  action value $S_{\Omega, \omega}(\phi^\star)=\inf_{n\geq0}S_{\Omega,\omega}(\phi^n)$.
	\end{enumerate}
\end{theorem}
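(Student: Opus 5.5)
Taking Theorem~\ref{thm:action_decaying} as the starting point: since $\phi^0\in S_{\le m}$, every iterate satisfies $S_{\Omega,\omega}(\phi^n)\le m$, so Lemma~\ref{lem:uniform-bound-M} gives $\|\phi^n\|_{H^1}\le M$ uniformly in $n$. Proposition~\ref{lem:GS-existence}(i) says $S_{\Omega,\omega}$ is bounded below, so the monotone sequence $S_{\Omega,\omega}(\phi^n)$ converges to $S^\ast:=\inf_n S_{\Omega,\omega}(\phi^n)$. Summing \eqref{eq:S_decaying} over $n$ then gives
\[
\sum_{n\ge0}\Big(2\tau\|\mu^{n+1}\|_{L^2}^2+\tfrac{\tau^2}{4}\|\mu^{n+1}\|_{H^1}^2\Big)\le m-S^\ast<\infty,
\]
hence $\|\mu^{n+1}\|_{H^1}\to0$ and $\|\phi^{n+1}-\phi^n\|_{H^1}=\tau\|\mu^{n+1}\|_{H^1}\to0$.

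For (i), $H^1$-boundedness alone gives only weak compactness, so I need an $H^2$ bound. From \eqref{eq:I-rearrange},
\[
\tfrac{\tau}{2}\Delta\phi^{n+1}=(1+\tau\alpha)(\phi^{n+1}-\phi^n)+\tau g(\phi^n),
\]
so $\|\Delta\phi^{n+1}\|_{L^2}\lesssim \tau^{-1}\|\phi^n\|_{L^2}+\|g(\phi^n)\|_{L^2}$ for fixed $\tau$. I then bound $g(\phi^n)$ in $L^2$:
\begin{itemize}
\item $V\phi^n$ is fine since $V\in L^\infty$.
\item $L_z\phi^n$ is controlled by $\|\nabla\phi^n\|_{L^2}$ because $D$ is bounded.
\item $|\phi^n|^{p-1}\phi^n$ requires $\phi^n\in L^{2p}$, which follows from $H^1\hookrightarrow L^{2p}$; this is exactly where the restriction $p\le d/(d-2)$ for $d\ge3$ enters.
\end{itemize}
Elliptic regularity for $\mathcal A$ with the given boundary conditions then gives $\sup_n\|\phi^n\|_{H^2}<\infty$. (The hypothesis $\phi^0\in H^2$ just makes the bound hold from $n=0$.) Rellich's compact embedding $H^2(D)\hookrightarrow H^1(D)$ then yields a strongly $H^1$-convergent subsequence. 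I expect this regularity step to be the main obstacle: one must check that the constants are uniform in $n$, which they are because they depend only on $M$ and the fixed $\tau$.

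For (ii), let $\phi^{n_j}\to\phi^\star$ in $H^1$. Since $\|\phi^{n_j}-\phi^{n_j-1}\|_{H^1}\to0$, also $\phi^{n_j-1}\to\phi^\star$. For $v\in H^1$, the weak form of $\mu^{n_j}$ is
\[
\langle\mu^{n_j},v\rangle=\tfrac12\langle\nabla\phi^{n_j},\nabla v\rangle+\alpha\langle\phi^{n_j}-\phi^{n_j-1},v\rangle+\big\langle (V+\omega+\beta|\phi^{n_j-1}|^{p-1}-\Omega L_z)\phi^{n_j-1},v\big\rangle .
\]
The left side tends to $0$. Each term on the right passes to the limit:
\begin{itemize}
\item The linear terms converge by $H^1$ convergence.
\item The nonlinear term converges because $u\mapsto|u|^{p-1}u$ is continuous from $L^{p+1}$ to $L^{(p+1)/p}$, combined with the embedding of Lemma~\ref{lem:standard-inequalities}(i).
\end{itemize}
This gives $\langle H_{\phi^\star}\phi^\star,v\rangle=0$, i.e.\ \eqref{eq:Euler-LagrangeEq-weakform}. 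Finally, $S_{\Omega,\omega}$ is continuous on $H^1$ (the quadratic terms directly, the $L^{p+1}$ term via the embedding), so $S_{\Omega,\omega}(\phi^\star)=\lim_j S_{\Omega,\omega}(\phi^{n_j})=S^\ast$. This value is the same for every accumulation point.
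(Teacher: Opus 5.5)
Your proposal is correct and follows essentially the paper's route: (i) a uniform $H^2$ bound via $H^1\hookrightarrow L^{2p}$ (where $p\le d/(d-2)$ enters) plus the compact embedding $H^2(D)\hookrightarrow H^1(D)$; (ii) vanishing of the residual $\mu^{n}$ from the action decay, together with continuity of $\phi\mapsto\langle H_\phi\phi,v\rangle$ and of $S_{\Omega,\omega}$ on $H^1$. The differences are minor. The paper bounds $\|\Delta\phi^{n+1}\|_{L^2}$ by testing \eqref{eq:I-rearrange} with $-\Delta\phi^{n+1}$; your rearrangement naturally gives $\tau^{-1}\|\phi^{n+1}-\phi^n\|_{L^2}$ rather than $\tau^{-1}\|\phi^n\|_{L^2}$, which is still uniformly bounded. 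For the limit, the paper uses the resolvent identity $(1+\tau\alpha)\mu^{n+1}-\frac{\tau}{2}\Delta\mu^{n+1}=H_{\phi^n}\phi^n$ at the same index $n_j$, whereas you use the defining equation of $\mu^{n_j}$ with the shifted subsequence $\phi^{n_j-1}\to\phi^\star$.
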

\begin{proof}
	Taking the $L^{2}$ inner product of \eqref{eq:I-rearrange} with $-\Delta \phi^{n+1}$ yields
	\begin{align}
		(1 + \tau \alpha) \left\| \nabla \phi^{n+1} \right\|_{L^{2}}^{2} + \frac{\tau}{2} \left\| \Delta \phi^{n+1} \right\|_{L^{2}}^{2} =&\, \left\langle (1 + \tau \alpha)\phi^{n} - \tau g(\phi^{n}), -\Delta\phi^{n+1} \right\rangle_{L^{2}(D)} \nonumber\\
		\leq&\, \Big( (1 + \tau \alpha) \left\| \phi^{n} \right\|_{L^{2}} + \tau \left\| g(\phi^{n}) \right\|_{L^{2}} \Big) \left\| \Delta\phi^{n+1} \right\|_{L^{2}}. \label{eq:1-H2-step1}
	\end{align}
	From the definition of $g(\phi^{n})$ and the Sobolev embedding $H^{1}(D) \hookrightarrow L^{2p}(D)$ (which holds under the assumption on $p$), we obtain
	\begin{align}
		\left\| g(\phi^{n}) \right\|_{L^{2}} \lesssim \left\| \phi^{n} \right\|_{H^{1}} + \left\| \phi^{n} \right\|_{L^{2p}}^{p} \lesssim \left\| \phi^{n} \right\|_{H^{1}} + \left\| \phi^{n} \right\|_{H^{1}}^{p}. \label{eq:g-H2-step1}
	\end{align}
	Inserting \eqref{eq:g-H2-step1} into \eqref{eq:1-H2-step1}, we get
	\begin{align*}
		\frac{\tau}{2} \left\| \Delta \phi^{n+1} \right\|_{L^{2}}^{2} \lesssim \Big( (1+\tau) \left\| \phi^{n} \right\|_{H^{1}} + \tau \left\| \phi^{n} \right\|_{H^{1}}^{p} \Big) \left\| \Delta \phi^{n+1} \right\|_{L^{2}},
	\end{align*}
	which leads to $\left\| \Delta \phi^{n+1} \right\|_{L^{2}} \lesssim (1 + \tau^{-1}) \left\| \phi^{n} \right\|_{H^{1}} + \left\| \phi^{n} \right\|_{H^{1}}^{p}$. This also means that for all $n\in \mathbb{N}^{+}$, $\phi^{n} \in H^{2}(D)$ and $\left\| \phi^{n} \right\|_{H^{2}} \leq C(\tau, M)$, where $C(\tau, M)$ depends on $\tau$ and $M$. By the Banach-Alaoglu theorem, there exists a weakly convergent subsequence $\{\phi^{n_{j}}\}_{j=0}^{\infty}$ in $H^2(D)$, and the compact embedding of $H^{2}(D)\hookrightarrow  H^{1}(D)$ implies the strong convergence of $\{\phi^{n_{j}}\}_{j=0}^{\infty}$ in the $H^{1}$-norm. This establishes the first assertion of Theorem~\ref{thm:convergent-subsequences}.
	
	Now we turn to the second assertion of Theorem~\ref{thm:convergent-subsequences}. Let $\phi^\star\in X$ be an accumulation point of $\{\phi^{n}\}_{n=0}^{\infty}$ with $\{\phi^{n_{j}}\}_{j=0}^{\infty}$ a subsequence converging strongly to $\phi^\star$ in the $H^{1}$-norm. Denoting 
	\begin{align*}
		a\left( \phi, \psi \right) := \left\langle H_{\phi} \phi, \psi \right\rangle =  \int_{D}{ \left[ \frac{1}{2}\nabla \phi \cdot \nabla \overline{\psi} + \left( V + \omega \right)\phi \, \overline{\psi} + \beta |\phi|^{p-1}\phi \, \overline{\psi} - \Omega \overline{\psi} L_{z} \phi  \right] }\,\text{d}\mathbf{x},
	\end{align*}
	we can find that for all $\psi\in X$,
	\begin{align}
		a\left( \phi^{n_{j}}, \psi \right) - a(\phi^{\star}, \psi) =&\, \int_{D}{ \left[ \frac{1}{2} \nabla \left( \phi^{n_{j}} - \phi^{\star} \right) \cdot \nabla \overline{\psi} + \overline{\psi}\,(V + \omega - \Omega L_{z}) \left( \phi^{n_{j}} - \phi^{\star} \right)   \right] }\,\text{d}\mathbf{x} \nonumber\\
		&\, + \beta \int_{D}{ \left( |\phi^{n_{j}}|^{p-1}\phi^{n_{j}} - |\phi^{\star}|^{p-1}\phi^{\star} \right) \overline{\psi} }\,\text{d}\mathbf{x}. \label{eq:a-phinj-phistar}
	\end{align}
	Note that
	\begin{align*}
		|\phi^{n_{j}}|^{p-1}\phi^{n_{j}}-|\phi^{\star}|^{p-1}\phi^{\star}
		=\int_{0}^{1}\left(\frac{p+1}{2}|\phi_{\rho}|^{p-1}(\phi^{n_{j}}-\phi^{\star}) + \frac{p-1}{2}|\phi_{\rho}|^{p-3}\phi_{\rho}^2\,\overline{(\phi^{n_{j}}-\phi^{\star})}\right)\mathrm{d}\rho,
	\end{align*}
	where $\phi_{\rho}:=\rho \phi^{n_{j}}+(1-\rho)\phi^{\star}$. Thus, we can obtain
	\begin{align*}
		\left| \int_{D}{ \left( |\phi^{n_{j}}|^{p-1}\phi^{n_{j}} - |\phi^{\star}|^{p-1}\phi^{\star} \right) \overline{\psi} }\,\text{d}\mathbf{x} \right| &\lesssim  p\, \max_{0\leq\rho\leq 1} \left\| \phi_{\rho} \right\|_{L^{p+1}}^{p-1}\left\| \phi^{n_{j}}-\phi^{\star} \right\|_{L^{p+1}} \left\| \psi \right\|_{L^{p+1}} \\
		&\lesssim \left\| \phi^{n_{j}}-\phi^{\star} \right\|_{H^{1}} \left\| \psi \right\|_{H^{1}}.
	\end{align*}
	Substituting this estimate into \eqref{eq:a-phinj-phistar} yields
	\begin{align*}
		\big| a\left( \phi^{n_{j}}, \psi \right) - a\left( \phi^{\star}, \psi \right) \big|
		\lesssim &\  \|\phi^{n_{j}} - \phi^{\star}\|_{H^{1}} \|\psi\|_{H^{1}} + \left|\int_{D}{ \left(|\phi^{n_{j}}|^{p-1}\phi^{n_{j}}-|\phi^{\star}|^{p-1}\phi^{\star} \right)\,\overline{\psi} }\,\mathrm{d}\mathbf{x}\right| \\
		\lesssim &\ \|\phi^{n_{j}} - \phi^{\star}\|_{H^{1}} \|\psi\|_{H^{1}}.
	\end{align*}
	The $H^1$-strong convergence of the subsequence $\{\phi^{n_{j}}\}_{j=0}^{\infty}$ implies that 
	for all $\psi \in X$, 
	\begin{align*}
		\lim_{j\rightarrow\infty} a\left( \phi^{n_{j}}, \psi \right) = a\left( \phi^{\star}, \psi \right).
	\end{align*}
	
	On the other hand, subtracting $H_{\phi^{n}}\phi^{n}$ from \eqref{eq_b:GF-semi-discretization} gives
	\begin{align*}
		\mu^{n+1} - H_{\phi^{n}}\phi^{n} = -\frac{1}{2}\Delta \left( \phi^{n+1} - \phi^{n} \right) + \alpha \left( \phi^{n+1} - \phi^{n} \right) = \frac{\tau}{2} \Delta \mu^{n+1} - \tau \alpha \mu^{n+1}.
	\end{align*}
	Rearranging the terms above yields the following elliptic resolvent equation (with the same boundary conditions as in \eqref{eq:GF-semi-discretization}):
	\begin{align}
		(1 + \tau\alpha) \mu^{n+1} - \frac{\tau}{2} \Delta \mu^{n+1} = H_{\phi^{n}}\phi^{n}. \label{eq:1-ellipse-equation-mu}
	\end{align}
	We find from \eqref{eq:1-ellipse-equation-mu} and the the action decay \eqref{eq:S_decaying} that for any $\psi \in X$, 
	\begin{align*}
		\left\langle H_{\phi^{n}} \phi^{n}, \psi \right\rangle =&\, (1 + \tau\alpha) \left\langle \mu^{n+1}, \psi \right\rangle + \frac{\tau}{2} \left\langle \nabla \mu^{n+1}, \nabla \psi \right\rangle \lesssim (1 + \tau) \left\| \mu^{n+1} \right\|_{H^{1}} \left\| \psi \right\|_{H^{1}} \\
		\lesssim&\, (1 + \tau^{-1}) \sqrt{ S_{\Omega, \omega}(\phi^{n}) - S_{\Omega, \omega}(\phi^{n+1}) } \left\| \psi \right\|_{H^{1}} \xrightarrow{n\to\infty} 0.
	\end{align*}
This means $\left| a\left( \phi^{n_{j}}, \psi \right) \right| = \left| \left\langle H_{\phi^{n_{j}}} \phi^{n_{j}}, \psi \right\rangle \right| \xrightarrow{j\to\infty} 0$, and so
	\begin{align*}
		a\left( \phi^{\star}, \psi \right) = \left\langle H_{\phi^{\star}} \phi^{\star}, \psi \right\rangle  = 0, \quad \forall\, \psi\in X,
	\end{align*}
	which confirms \eqref{eq:Euler-LagrangeEq-weakform}. Finally, since the action functional sequence $\{S_{\Omega, \omega}(\phi^n)\}$ is non-increasing and bounded from below, it has a unique limit, i.e.,
	\begin{align*}
		S_{\Omega, \omega}(\phi^{\star})=\lim_{j\to\infty}S_{\Omega, \omega}(\phi^{n_j})=\lim_{n\to\infty}S_{\Omega, \omega}(\phi^n)=\inf_{n\geq0}S_{\Omega, \omega}(\phi^n),
	\end{align*}
	which completes the proof.
\end{proof}

\begin{remark}
	For the case of homogeneous Dirichlet boundary conditions with some regularity assumption on the boundary $\partial\Omega$, we could remove the additional assumption $p\leq d/(d-2)$ for $d\geq 3$ in Theorem~\ref{thm:convergent-subsequences} by employing the $W^{2,q}$ estimate (see, e.g., \cite[Sect.~9.6]{GT}) for the elliptic equation \eqref{eq:I-rearrange}:
	\begin{align}\label{eq:Lq-estimate}
		\left\|\phi^{n+1}\right\|_{W^{2,q}}\leq C(\tau) \left( \left\| \phi^{n} \right\|_{L^{q}} + \tau \left\| g(\phi^{n}) \right\|_{L^{q}} \right), 
	\end{align}
	where $q=\frac{2d}{p(d-2)}\in \left(\frac{2d}{d+2},\frac{2d}{d-2}\right)$ and $C(\tau)>0$ depends on $\tau$ but is independent of $n$. In fact, applying H\"older's inequality and the continuous embeddings $H^1(D)\hookrightarrow L^{q}(D)$ and $H^1(D)\hookrightarrow L^{pq}(D)$ gives
    \begin{align*}
    	\left\|g(\phi^n)\right\|_{L^q}\lesssim \left\|\phi^{n}\right\|_{L^q} + \left\|\phi^{n}\right\|_{H^1}^q + \left\|\phi^{n}\right\|_{L^{pq}}^p \lesssim \left\|\phi^{n}\right\|_{H^1} + \left\|\phi^{n}\right\|_{H^1}^q +  \left\|\phi^{n}\right\|_{H^1}^p \lesssim 1,
    \end{align*}
    which, combining with \eqref{eq:Lq-estimate}, implies the boundedness of $\{\phi^n\}_{n=1}^{\infty}$ in $W^{2,q}(D)$ for fixed $\tau>0$, and the subsequential convergence in $H^1$ can be obtained by the compact embedding $W^{2,q}(D)\hookrightarrow H^1(D)$.
\end{remark}

\subsection{Local convergence rate}

This subsection is devoted to rigorously establishing a practically observed exponential convergence rate of the DGF scheme \eqref{eq:GF-semi-discretization}. To achieve this goal, let us first discuss the uniqueness of GS.

\subsubsection{Local uniqueness of GS}

For any $\phi \in X$, we have $\mathrm{e}^{\mathrm{i}\theta}\phi \in X$ and $S_{\Omega,\omega}(\mathrm{e}^{\mathrm{i}\theta}\phi) = S_{\Omega,\omega}(\phi)$. Consequently, the (local) uniqueness of the GS can only be expected up to a constant phase factor $\mathrm{e}^{\mathrm{i}\theta}$ with $\theta\in(-\pi,\pi]$. The action functional $S_{\Omega,\omega}$ may also be invariant under other symmetry transformations in special geometric settings, for instance, under rotations of the coordinate system when the domain $D$ is a ball and the potential $V$ is radially symmetric. Such additional symmetries are not considered in the present work, and let us focus on the phase invariance inherent to the complex-valued formulation.

Denote the phase orbit $\llbracket \phi \rrbracket := \left\{ \mathrm{e}^{\mathrm{i} \theta} \phi \,\colon\, \theta\in \mathbb{R} \right\}$ of $\phi\in X$, and define the phase-invariant $H^{1}$-distance between two orbits by
\begin{align*}
	\mathrm{dist}_{H^{1}}\left[ \phi_{1}, \phi_{2} \right] := \inf_{u_{1}\in \llbracket \phi_{1} \rrbracket, u_{2}\in \llbracket \phi_{2} \rrbracket }\left\| u_{1} - u_{2} \right\|_{H^{1}}, \quad \forall \, \phi_{1}, \phi_{2} \in X.
\end{align*}
We now show that this definition of the distance is well posed.

\begin{lemma}[Orbit distance]\label{lem:h1-distance}
	For any $\phi_{1},\phi_{2}, \phi_{3} \in X$, the following properties hold:
	\begin{enumerate}[label=(\roman*)]
		\item Attainment of the infimum: there exists $\theta \in(-\pi,\pi]$ such that
		\begin{align*}
			\mathrm{dist}_{H^{1}}\left[ \phi_{1}, \phi_{2} \right]
			= \inf_{u\in \llbracket \phi_{2} \rrbracket} \left\| \phi_{1} - u \right\|_{H^{1}}
			= \left\| \phi_{1} - \mathrm{e}^{\mathrm{i}\theta}\phi_{2} \right\|_{H^{1}}.
		\end{align*}
		
		\item Non-negativity and separation: $\mathrm{dist}_{H^{1}}\left[ \phi_{1}, \phi_{2} \right] \ge 0$, with equality if and only if $\phi_{1}\in \llbracket \phi_{2} \rrbracket$ (or, $\phi_{2}\in \llbracket \phi_{1} \rrbracket$).
		
		\item Symmetry: $\mathrm{dist}_{H^{1}}\left[ \phi_{1}, \phi_{2} \right] = \mathrm{dist}_{H^{1}}\left[ \phi_{2}, \phi_{1} \right]$.
		
		\item Triangle inequality: $\mathrm{dist}_{H^{1}}\left[ \phi_{1}, \phi_{2} \right] \le \mathrm{dist}_{H^{1}}\left[ \phi_{1}, \phi_{3} \right] + \mathrm{dist}_{H^{1}}\left[ \phi_{3}, \phi_{2} \right]$.
	\end{enumerate}
\end{lemma}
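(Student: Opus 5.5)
The whole argument rests on one observation: multiplication by a unimodular constant $\mathrm{e}^{\mathrm{i}\theta}$ is an isometry of $X$. Indeed, $\|\mathrm{e}^{\mathrm{i}\theta}u\|_{H^1}=\|u\|_{H^1}$, since $|\mathrm{e}^{\mathrm{i}\theta}u|=|u|$ and $\nabla(\mathrm{e}^{\mathrm{i}\theta}u)=\mathrm{e}^{\mathrm{i}\theta}\nabla u$. Hence for $u_1=\mathrm{e}^{\mathrm{i}\theta_1}\phi_1$ and $u_2=\mathrm{e}^{\mathrm{i}\theta_2}\phi_2$ we get
\[
\|u_1-u_2\|_{H^1}=\|\phi_1-\mathrm{e}^{\mathrm{i}(\theta_2-\theta_1)}\phi_2\|_{H^1}.
\]
So the double infimum collapses to a one-parameter infimum over $\theta\in\mathbb{R}$ of $h(\theta):=\|\phi_1-\mathrm{e}^{\mathrm{i}\theta}\phi_2\|_{H^1}$. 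This gives the first equality in (i). The function $h$ is $2\pi$-periodic and continuous, because
\[
|h(\theta)-h(\theta')|\le |\mathrm{e}^{\mathrm{i}\theta}-\mathrm{e}^{\mathrm{i}\theta'}|\,\|\phi_2\|_{H^1}.
\]
It therefore attains its minimum on the compact interval $[-\pi,\pi]$, and by periodicity at some $\theta\in(-\pi,\pi]$. One could even write $h(\theta)^2=\|\phi_1\|_{H^1}^2+\|\phi_2\|_{H^1}^2-2\operatorname{Re}(\mathrm{e}^{-\mathrm{i}\theta}\langle\phi_1,\phi_2\rangle_{\mathbb{C}})$ with the complex $H^1$ pairing and minimize explicitly, but compactness suffices.

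For (ii), nonnegativity is immediate. If the distance is zero, then by (i) there is a $\theta$ with $\phi_1=\mathrm{e}^{\mathrm{i}\theta}\phi_2$, i.e.\ $\phi_1\in\llbracket\phi_2\rrbracket$. Conversely, membership in the orbit gives a zero at that $\theta$. The two orbit conditions are equivalent because $\phi_2=\mathrm{e}^{-\mathrm{i}\theta}\phi_1$. This is the only place where attainment from (i) is genuinely needed; otherwise we would only get an infimum of zero. For (iii), the definition is symmetric in the pair $(u_1,u_2)$ since $\|u_1-u_2\|=\|u_2-u_1\|$.

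For (iv), I will use (i) to pick minimizers. Let $\mathrm{dist}_{H^1}[\phi_1,\phi_3]=\|\phi_1-\mathrm{e}^{\mathrm{i}a}\phi_3\|_{H^1}$ and $\mathrm{dist}_{H^1}[\phi_3,\phi_2]=\|\phi_3-\mathrm{e}^{\mathrm{i}b}\phi_2\|_{H^1}$. By the isometry, the second quantity equals $\|\mathrm{e}^{\mathrm{i}a}\phi_3-\mathrm{e}^{\mathrm{i}(a+b)}\phi_2\|_{H^1}$. The ordinary triangle inequality in $X$ then gives
\[
\|\phi_1-\mathrm{e}^{\mathrm{i}(a+b)}\phi_2\|_{H^1}\le \mathrm{dist}_{H^1}[\phi_1,\phi_3]+\mathrm{dist}_{H^1}[\phi_3,\phi_2],
\]
and the left side is at least $\mathrm{dist}_{H^1}[\phi_1,\phi_2]$. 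Even without attainment, an $\varepsilon$-argument would work here.

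None of these steps is a real obstacle. The only point needing care is the isometry reduction together with the continuity and compactness argument in (i), on which (ii) and (iv) depend.
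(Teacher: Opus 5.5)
Your proof is correct and follows the paper's own argument. Both reduce the double infimum to a single phase parameter using the isometry $u\mapsto \mathrm{e}^{\mathrm{i}\theta}u$, obtain attainment from continuity and $2\pi$-periodicity, and prove the triangle inequality by composing the two optimal phases $\mathrm{e}^{\mathrm{i}(a+b)}$. Your explicit Lipschitz estimate for $h$ in the $H^1$ norm is slightly cleaner than the paper's version, which appeals to smoothness of $\theta\mapsto\|\phi_1-\mathrm{e}^{\mathrm{i}\theta}\phi_2\|_{L^2}^2$ where the $H^1$ norm is clearly what is meant.
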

\begin{proof}
	First, by reducing the two phase parameters to one, we have
	\begin{align*}
		\mathrm{dist}_{H^{1}}\left[ \phi_{1}, \phi_{2} \right] =&\, \inf_{\theta_{1}, \theta_{2} \in \mathbb{R}} \left\| \mathrm{e}^{\mathrm{i} \theta_{1}} \phi_{1} - \mathrm{e}^{\mathrm{i} \theta_{2}} \phi_{2} \right\|_{H^{1}} = \inf_{\theta_{1}, \theta_{2} \in \mathbb{R}} \left\| \phi_{1} - \mathrm{e}^{\mathrm{i} \left( \theta_{2} - \theta_{1} \right)}\phi_{2} \right\|_{H^{1}} \\
		=&\, \inf_{\theta \in \mathbb{R}} \left\| \phi_{1} - \mathrm{e}^{\mathrm{i} \theta}\phi_{2} \right\|_{H^{1}} = \inf_{u \in \llbracket \phi_{2} \rrbracket} \left\| \phi_{1} - u \right\|_{H^{1}}.
	\end{align*}
	Since the function $\theta \mapsto \| \phi_{1} - \mathrm{e}^{i\theta}\phi_{2} \|_{L^{2}}^2$ is a smooth $2\pi$-periodic function, it attains at least one minimum on $(-\pi,\pi]$, which proves \textit{(i)}. Assertions \textit{(ii)}–\textit{(iii)} are immediate. To prove $(iv)$, let $\theta_{1},\theta_{2}\in\mathbb{R}$ satisfy
	\begin{align*}
		\mathrm{dist}_{H^{1}}\left[ \phi_{1}, \phi_{3} \right] = \left\| \phi_{1} - \mathrm{e}^{\mathrm{i} \theta_{1}}\phi_{3} \right\|_{H^{1}}, \quad \mathrm{dist}_{H^{1}}\left[ \phi_{3}, \phi_{2} \right] = \left\| \phi_{3} - \mathrm{e}^{\mathrm{i} \theta_{2}} \phi_{2} \right\|_{H^{1}},
	\end{align*}
	and then
	\begin{align*}
		\mathrm{dist}_{H^{1}}\left[ \phi_{1}, \phi_{2} \right] \leq&\, \left\| \phi_{1} - \mathrm{e}^{\mathrm{i} \left( \theta_{1} + \theta_{2} \right)}\phi_{2} \right\|_{H^{1}} \leq \left\| \phi_{1} - \mathrm{e}^{\mathrm{i} \theta_{1}} \phi_{3} \right\|_{H^{1}} + \left\| \mathrm{e}^{\mathrm{i} \theta_{1}} \phi_{3} - \mathrm{e}^{\mathrm{i} \left( \theta_{1} + \theta_{2} \right)}\phi_{2} \right\|_{H^{1}} \\
		=&\, \left\| \phi_{1} - \mathrm{e}^{\mathrm{i} \theta_{1}}\phi_{3} \right\|_{H^{1}} + \left\| \phi_{3} - \mathrm{e}^{\mathrm{i} \theta_{2}} \phi_{2} \right\|_{H^{1}} = \mathrm{dist}_{H^{1}}\left[ \phi_{1}, \phi_{3} \right] + \mathrm{dist}_{H^{1}}\left[ \phi_{3}, \phi_{2} \right],
	\end{align*}
	which completes the proof.
\end{proof}

For any GS $\phi_{g}$ in \eqref{gs unconstrain def}, define the neighborhood $U_{\delta}(\phi_{g}) := \left\{ \phi\in X \,\colon\, \mathrm{dist}_{H^{1}}\left[ \phi, \phi_{g} \right] \leq \delta \right\}$. Note that for every $u=\mathrm{e}^{\mathrm{i} \theta}\phi_{g} \in \llbracket \phi_{g} \rrbracket$, the vector $\mathrm{i}u$ is tangent to the phase orbit at $u$ since $\frac{\mathrm d}{\mathrm d \theta}\mathrm{e}^{\mathrm{i} \theta}\phi_{g} = \mathrm{i} \mathrm{e}^{\mathrm{i} \theta}\phi_{g} = \mathrm{i}u$. Accordingly, we introduce the $H^1$-orthogonal complement of $\mathrm{i}u$:
\[ 
\left\{ \mathrm i u \right\}_{H^{1}}^{\perp} := \big\{ \eta \in X \,\colon\, \left\langle \mathrm{i}u, \eta \right\rangle_{H^{1}(D)} = 0 \big\}.
\]
For any $\phi \in U_{\delta}(\phi_{g})$, we define $\theta_{\phi}$ by minimizing the distance on $(-\pi,\pi]$:
\begin{align*}
	\theta_{\phi} \in \arg \min_{\theta \,\in\, (-\pi, \pi]} \left\| \phi - \mathrm{e}^{\mathrm{i} \theta} \phi_{g} \right\|_{H^{1}}.
\end{align*}

\begin{lemma}\label{prop:unique-theta}
	If $\delta > 0$ is sufficiently small, then for any $\phi \in U_{\delta}(\phi_{g})$, the minimizer $$\theta_{\phi} = \operatorname*{Arg}\left( \langle \phi, \phi_g \rangle + \langle \nabla\phi, \nabla\phi_g \rangle \right)$$ is unique, where $\operatorname{Arg}$ denotes the principal argument angle of a complex number. Moreover, setting $u_{\phi} = \mathrm{e}^{\mathrm{i} \theta_{\phi}} \phi_{g} \in \llbracket \phi_{g} \rrbracket$, we have $\phi - u_{\phi} \in \left\{ \mathrm i u_{\phi} \right\}_{H^{1}}^{\perp}$.
\end{lemma}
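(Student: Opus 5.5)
Expanding the squared distance reduces the minimization to a one-dimensional trigonometric problem. Set $z := \langle \phi, \phi_g \rangle + \langle \nabla\phi, \nabla\phi_g \rangle = \int_D (\phi\,\overline{\phi_g} + \nabla\phi\cdot\nabla\overline{\phi_g})\,\mathrm{d}\mathbf{x} \in \mathbb{C}$. Since $\langle \phi, \mathrm{e}^{\mathrm{i}\theta}\phi_g\rangle_{H^1(D)} = \operatorname{Re}(\mathrm{e}^{-\mathrm{i}\theta} z)$, we get
\begin{align*}
	\left\| \phi - \mathrm{e}^{\mathrm{i}\theta}\phi_g \right\|_{H^1}^2 = \|\phi\|_{H^1}^2 + \|\phi_g\|_{H^1}^2 - 2|z|\cos\left(\theta - \operatorname{Arg} z\right).
\end{align*}
If $z\neq 0$, this $2\pi$-periodic function has the unique minimizer $\theta = \operatorname{Arg} z$ on $(-\pi,\pi]$, where the cosine equals $1$. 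This gives both the formula and the uniqueness, provided we show $z \neq 0$ for $\phi\in U_\delta(\phi_g)$ with $\delta$ small.

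To show $z\neq0$, I would use Lemma~\ref{lem:h1-distance}(i) to pick $\theta_0$ with $\|\phi - \mathrm{e}^{\mathrm{i}\theta_0}\phi_g\|_{H^1}\le\delta$, and write $\phi = \mathrm{e}^{\mathrm{i}\theta_0}\phi_g + r$ with $\|r\|_{H^1}\le\delta$. Then
\begin{align*}
	z = \mathrm{e}^{\mathrm{i}\theta_0}\|\phi_g\|_{H^1}^2 + \int_D (r\overline{\phi_g} + \nabla r\cdot\nabla\overline{\phi_g})\,\mathrm{d}\mathbf{x},
\end{align*}
and Cauchy--Schwarz gives $|z| \ge \|\phi_g\|_{H^1}^2 - \delta\|\phi_g\|_{H^1}$. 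This is positive once $\delta < \|\phi_g\|_{H^1}$. Here we need $\phi_g\neq 0$, which follows from Proposition~\ref{lem:GS-existence}(ii)--(iii): $S_{\Omega,\omega}(\phi_g) = \inf_{\mathcal M} S_{\Omega,\omega} <0 = S_{\Omega,\omega}(0)$. This nondegeneracy of $\phi_g$ is the only genuine obstacle; the rest is algebra.

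For the orthogonality, take $u_\phi = \mathrm{e}^{\mathrm{i}\theta_\phi}\phi_g$ and use the first-order condition at the minimizer:
\begin{align*}
	0 = \frac{\mathrm d}{\mathrm d\theta}\Big|_{\theta=\theta_\phi} \left\|\phi - \mathrm{e}^{\mathrm{i}\theta}\phi_g\right\|_{H^1}^2 = -2\left\langle \phi - u_\phi, \mathrm{i}u_\phi\right\rangle_{H^1(D)}.
\end{align*}
In deriving this, the term $\langle u_\phi, \mathrm{i}u_\phi\rangle_{H^1(D)}$ vanishes because $\operatorname{Re}(\mathrm{i}\,\cdot\,\text{real}) = 0$. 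One can also check it directly: $\langle \phi, \mathrm{i}u_\phi\rangle_{H^1(D)} = \operatorname{Re}(-\mathrm{i}\,\mathrm{e}^{-\mathrm{i}\theta_\phi} z) = \operatorname{Re}(-\mathrm{i}|z|) = 0$. Hence $\phi - u_\phi \in \{\mathrm{i}u_\phi\}_{H^1}^{\perp}$.
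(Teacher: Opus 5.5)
Your proposal is correct and follows essentially the paper's route: both reduce $\theta\mapsto\|\phi-\mathrm{e}^{\mathrm{i}\theta}\phi_g\|_{H^1}^2$ to $\|\phi\|_{H^1}^2+\|\phi_g\|_{H^1}^2-2|z|\cos(\theta-\operatorname{Arg}z)$ and obtain the orthogonality from the first-order condition at $\theta_\phi$. You differ only in reading off global uniqueness directly from the cosine, where the paper applies a second-derivative test at the stationary points $\theta_\phi-k\pi$. Your bound $|z|\ge\|\phi_g\|_{H^1}^2-\delta\|\phi_g\|_{H^1}$ together with $\phi_g\neq 0$ also makes explicit the requirement $r_\phi>0$, which the paper only asserts for ``sufficiently small $\delta$''.
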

\begin{proof}
	Denote $F(\theta) := \frac12 \| \phi - \mathrm{e}^{\mathrm i\theta} \phi_g \|_{H^{1}}^2
	= \frac12 \|\phi\|_{H^{1}}^2 + \frac12 \|\phi_g\|_{H^{1}}^2 - \langle \phi, \mathrm{e}^{\mathrm i\theta} \phi_g \rangle_{H^{1}(D)}$, and its derivative reads
	\begin{align*}
		F'(\theta) = -\langle \phi, \mathrm i \mathrm{e}^{\mathrm i\theta} \phi_g \rangle_{H^{1}(D)}
		= -\operatorname{Im} \left( \langle \phi, \mathrm{e}^{\mathrm i\theta} \phi_g \rangle + \langle \nabla\phi, \mathrm{e}^{\mathrm i\theta} \nabla\phi_g \rangle \right).
	\end{align*}
	Denoting $\langle \phi, \phi_g \rangle + \langle \nabla\phi, \nabla\phi_g \rangle = r_{\phi} \, \mathrm{e}^{\mathrm{i} \theta_{\phi}}$ with
	\begin{align*}
		r_{\phi} := \left| \langle \phi, \phi_g \rangle + \langle \nabla\phi, \nabla\phi_g \rangle \right|, \quad \theta_{\phi} := \operatorname*{Arg}\left( \langle \phi, \phi_g \rangle + \langle \nabla\phi, \nabla\phi_g \rangle \right),
	\end{align*} 
	we have
	\begin{align*}
		\langle \phi, \mathrm{e}^{\mathrm i\theta} \phi_g \rangle + \langle \nabla\phi, \mathrm{e}^{\mathrm i\theta} \nabla\phi_g \rangle = r_{\phi} \mathrm{e}^{\mathrm{i} \left( \theta_{\phi} - \theta \right)}.
	\end{align*}
	Hence,
	\begin{align*}
		F'(\theta) = -r_{\phi} \sin\left( \theta_{\phi} - \theta \right), \quad F''(\theta) = r_{\phi} \cos\left( \theta_{\phi} - \theta \right).
	\end{align*}
	The stationary condition $F'(\theta) = 0$ yields $\theta = \theta_{\phi} - k\pi=\theta_{k}$, $k\in \mathbb{Z}$. At a stationary point $\theta_{k}$, there is $F''(\theta_{k}) = (-1)^{k}r_{\phi}$. Hence, for even $k$ and enough small $\delta$, $F''(\theta_{k}) = r_{\phi} > 0$, showing a local minimum. This also means that there is only one minimizer $\theta_{\phi}$ in $(-\pi,\pi]$, giving the uniqueness. Finally,
	\begin{align*}
		\left\langle \mathrm{i}u_{\phi}, \phi - u_{\phi} \right\rangle_{H^{1}(D)} =&\, \left\langle \mathrm{i} \mathrm{e}^{\mathrm{i} \theta_{\phi}}\phi_{g}, \phi - \mathrm{e}^{\mathrm{i} \theta_{\phi}}\phi_{g} \right\rangle_{H^{1}(D)} = \left\langle \mathrm{i} \mathrm{e}^{\mathrm{i} \theta_{\phi}}\phi_{g}, \phi \right\rangle_{H^{1}(D)} - \left\langle \mathrm{i} \mathrm{e}^{\mathrm{i} \theta_{\phi}}\phi_{g}, \mathrm{e}^{\mathrm{i} \theta_{\phi}}\phi_{g} \right\rangle_{H^{1}(D)} \\
		=&\, \left\langle \mathrm{i} \mathrm{e}^{\mathrm{i} \theta_{\phi}}\phi_{g}, \phi \right\rangle_{H^{1}(D)} = - F'(\theta_{\phi}) = 0,
	\end{align*}
	so $\phi - u_{\phi} \in \{\mathrm i u_{\phi}\}_{H^{1}}^{\perp}$. This completes the proof.
\end{proof}

With Lemma~\ref{prop:unique-theta}, for a sufficiently small $\delta > 0$, we can have the decomposition
\begin{align*}
	\phi = u_{\phi} + \left( \phi - u_{\phi} \right) = \mathrm{e}^{\mathrm{i} \theta_{\phi}} \left[ \phi_{g} + \left( \mathrm{e}^{-\mathrm{i} \theta_{\phi}}\phi - \phi_{g} \right) \right],
\end{align*}
and note that $\mathrm{e}^{-\mathrm{i}\theta_{\phi}}\phi - \phi_{g} \in \{\mathrm{i}\phi_{g}\}_{H^{1}}^{\perp}$. Hence, we can introduce a local coordinate mapping
\begin{align*} 
	\chi: (-\pi, \pi]\times \{\mathrm i \phi_{g}\}_{H^{1}}^{\perp} \mapsto U_{\delta}(\phi_{g}),\quad (\theta, v)\mapsto \mathrm{e}^{\mathrm{i} \theta}\left( \phi_{g} + v \right),
\end{align*}
which is a bijection onto its image. Its inverse is given by 
\begin{align*}
	\chi^{-1}:\ \phi\in U_{\delta}(\phi_{g}) \mapsto 
	\bigl(\theta(\phi), v(\phi)\bigr)
	=\left( \theta_{\phi}, \mathrm{e}^{-\mathrm{i} \theta_{\phi}}\phi - \phi_{g} \right)\in (-\pi, \pi]\times \{\mathrm i \phi_{g}\}_{H^{1}}^{\perp}.
\end{align*}
For convenience, set 
\begin{align*}
	v_{\phi} := \mathrm{e}^{-\mathrm{i}\theta_{\phi}}\phi - \phi_{g}, \quad w_{\phi} := \phi - u_{\phi} = \mathrm{e}^{\mathrm{i} \theta_{\phi}} v_{\phi},
\end{align*}
so that $\mathrm{dist}_{H^{1}}\!\left[ \phi, \phi_{g} \right] = \| w_{\phi} \|_{H^{1}} = \| v_{\phi} \|_{H^{1}}$. For any $u, \eta_{1}, \eta_{2} \in X$, the first- and second-order G\^{a}teaux derivatives of the action functional read
\begin{align*}
	\mathcal{D}S_{\Omega,\omega}(u)[\eta_{1}] =&\, 2\left\langle \left( -\frac{1}{2}\Delta + V +\beta|u|^{p-1} - \Omega L_{z} + \omega \right)\phi, \eta_{1} \right\rangle_{L^{2}(D)}, \\
	\mathcal{D}^{2}S_{\Omega,\omega}(u)[\eta_{2},\eta_{1}] =&\, 2\left\langle \left( -\frac{1}{2}\Delta + V - \Omega L_{z} + \omega \right)\eta_{2}, \eta_{1} \right\rangle_{L^{2}(D)} \\
	&\, + 2\left\langle \frac{p-1}{2}\beta |u|^{p-3} u^{2}\overline{\eta_{2}} + \frac{p+1}{2}\beta |u|^{p-1}\eta_{2}, \eta_{1} \right\rangle_{L^{2}(D)}.
\end{align*}
Define
\begin{align*}
	\mathcal{L}_{1} := -\frac{1}{2}\Delta + V - \Omega L_{z} + \omega, \quad \mathcal{L}_{2,u} \eta := \frac{p-1}{2}\beta |u|^{p-3} u^{2}\overline{\eta} + \frac{p+1}{2}\beta |u|^{p-1}\eta, \quad \mathcal{L}_{u} := \mathcal{L}_{1} + \mathcal{L}_{2,u}.
\end{align*}
Then $\mathcal{D}^{2}S_{\Omega,\omega}(u)[\eta_{2},\eta_{1}] = 2\left\langle \mathcal{L}_{u} \eta_{2}, \eta_{1} \right\rangle_{L^{2}(D)}$. Since only $p>1$ is assumed, higher-order variations may not be available. Hence, we require continuity of the second-order G\^{a}teaux derivative with respect to $u$. Recall the following result.

\begin{lemma}[Theorem A.2 of \cite{willem1996minimax}] \label{lem:f2}
	On a bounded domain $D$, for $1<p<\infty$, the mappings $u\mapsto|u|^{p-1}$ and $u\mapsto |u|^{p-3}u^{2}$ from $L^{p+1}(D)$ to $L^{\frac{p+1}{p-1}}(D)$ are continuous.
\end{lemma}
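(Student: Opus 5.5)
We prove continuity of the Nemytskii operators $N_1(u)=|u|^{p-1}$ and $N_2(u)=|u|^{p-3}u^2$ from $L^{p+1}(D)$ into $L^{r}(D)$, $r:=\frac{p+1}{p-1}$. The route is a growth bound plus a pointwise continuity argument, closed by a subsequence version of dominated convergence. This is the classical Krasnosel'skii argument.

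First we record the pointwise facts. For $z\in\mathbb{C}$ set $f_1(z)=|z|^{p-1}$, and set $f_2(z)=|z|^{p-3}z^2$ for $z\neq 0$, $f_2(0)=0$.
\begin{itemize}
\item Both are continuous on $\mathbb{C}$. At $z=0$ this holds because $|f_2(z)|=|z|^{p-1}\to 0$, since $p>1$.
\item Both satisfy the growth bound $|f_j(z)|\le |z|^{p-1}$.
\item Since $(p-1)r=p+1$, we get $\|f_j(u)\|_{L^r}^r\le\|u\|_{L^{p+1}}^{p+1}$. So each operator maps $L^{p+1}$ into $L^r$ and is bounded on bounded sets.
\item Measurability of $f_j\circ u$ follows from continuity of $f_j$.
\end{itemize}

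Next we prove sequential continuity, using the subsequence principle. Let $u_n\to u$ in $L^{p+1}(D)$. It suffices to show that every subsequence has a further subsequence along which $f_j(u_n)\to f_j(u)$ in $L^r$. Pass to a subsequence (not relabeled) with $u_n\to u$ a.e. and with a dominating function $h\in L^{p+1}(D)$, $|u_n|\le h$ a.e. One such $h$ is $|u_{n_1}|+\sum_k|u_{n_{k+1}}-u_{n_k}|$ for a rapidly Cauchy subsequence.

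By pointwise continuity, $f_j(u_n)\to f_j(u)$ a.e. We also have
\[
|f_j(u_n)-f_j(u)|^r\le 2^{r-1}\bigl(h^{(p-1)r}+|u|^{(p-1)r}\bigr)=2^{r-1}\bigl(h^{p+1}+|u|^{p+1}\bigr)\in L^1(D).
\]
The dominated convergence theorem then gives $\|f_j(u_n)-f_j(u)\|_{L^r}\to 0$. Since the limit $f_j(u)$ does not depend on the subsequence, the whole sequence converges, which proves continuity.

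The only delicate point is the existence of the dominating function $h$. This is the standard partial converse of dominated convergence (Riesz–Fischer), applied to a fast Cauchy subsequence. The other subtlety is the definition of $f_2$ at $0$ when $1<p<3$, where $|z|^{p-3}$ is singular. The bound $|f_2(z)|=|z|^{p-1}$ resolves it. Boundedness of $D$ is not needed for the argument itself, but it is consistent with the setting in which the lemma is applied.
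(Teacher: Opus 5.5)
Your proposal is correct and takes essentially the same approach as the source: the paper proves nothing here and only cites Willem's Theorem~A.2, whose proof is exactly your argument (a converse of dominated convergence to get an a.e.\ convergent, dominated subsequence, then Lebesgue's theorem and the subsequence principle). Your version also works directly with complex-valued $u$ and, thanks to the homogeneous bound $|f_j(z)|\le |z|^{p-1}$, does not need $|D|<\infty$; both points fit the way the paper actually uses the lemma.
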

For any $\rho\in[0,1]$, using H\"older's inequality and Sobolev's embeddings and Lemma~\ref{lem:f2}, we have
\begin{align*}
	&\,\left\langle \left( \mathcal{L}_{2,u_{\phi} + \rho w_{\phi}} - \mathcal{L}_{2,u_{\phi}} \right) w_{\phi}, w_{\phi} \right\rangle_{L^{2}(D)} \\
	=&\, \frac{(p-1)\beta}{2}\left\langle \left( |u_{\phi} + \rho w_{\phi}|^{p-3}(u_{\phi} + \rho w_{\phi})^{2} - |u_{\phi}|^{p-3}(u_{\phi})^{2} \right) \overline{w_{\phi}}, w_{\phi} \right\rangle_{L^{2}(D)} \\
	&\, +  \frac{(p+1)\beta}{2}\left\langle \left( |u_{\phi} + \rho w_{\phi}|^{p-1} - |u_{\phi}|^{p-1} \right) w_{\phi}, w_{\phi} \right\rangle_{L^{2}(D)} \\
	\lesssim&\, \left( \left\| |u_{\phi} + \rho w_{\phi}|^{p-3}(u_{\phi} + \rho w_{\phi})^{2} - |u_{\phi}|^{p-3}(u_{\phi})^{2} \right\|_{L^{\frac{p+1}{p-1}}} + \left\| |u_{\phi} + \rho w_{\phi}|^{p-1} - |u_{\phi}|^{p-1} \right\|_{L^{\frac{p+1}{p-1}}} \right) \left\| w_{\phi} \right\|_{L^{p+1}}^{2}.
\end{align*}
Consequently,
\begin{align}
	\left\langle \left( \mathcal{L}_{2,u_{\phi} + \rho w_{\phi}} - \mathcal{L}_{2,u_{\phi}} \right) w_{\phi}, w_{\phi} \right\rangle_{L^{2}(D)} = o\left( \left\| w_{\phi} \right\|_{L^{p+1}}^{2} \right) = o\left( \left\| w_{\phi} \right\|_{H^{1}}^{2} \right), \label{eq:L-L}
\end{align}
which will be used frequently below. For subsequent technical analysis, we require $\phi_{g}$ (and all its phase shifts) to be a \textbf{non‑degenerate} minimizer of $S_{\Omega,\omega}$ on $X$ by imposing a standard coercivity condition on the second variation of $S_{\Omega,\omega}$.

\begin{proposition}[Local uniqueness]\label{prop:coercive-second-variation}
	Assume that there exists a constant $c>0$ such that
	\begin{align}
		\mathcal{D}^{2}S_{\Omega,\omega} (\phi_{g}) [\eta,\eta] \ge c\left\| \eta \right\|_{H^{1}}^{2}, \quad \forall \, \eta\in \{\mathrm i \phi_{g}\}_{H^{1}}^{\perp}. \label{eq:coercivity-second-variation}
	\end{align}
	Then, for a sufficiently small $\delta>0$, we have
	\begin{align}
		S_{\Omega,\omega}(\phi) - S_{\Omega,\omega}(\phi_{g}) \asymp \left( \mathrm{dist}_{H^{1}}\left[ \phi, \phi_{g} \right] \right)^{2}, \label{eq:energy-gap-quadratic}
	\end{align}
	and the unconstrained minimization problem \eqref{gs unconstrain def} has a minimizer in $U_{\delta}(\phi_{g})$ if and only if it is $\phi_{g}$ or its phase shifts.
\end{proposition}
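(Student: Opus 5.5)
The plan is a second-order Taylor expansion of $S_{\Omega,\omega}$ around the nearest point of the phase orbit, combined with the coercivity assumption \eqref{eq:coercivity-second-variation}.

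\emph{Setup.} Fix $\phi\in U_\delta(\phi_g)$ with $\delta$ small enough for Lemma~\ref{prop:unique-theta} to apply. Write $\phi=u_\phi+w_\phi$, where $u_\phi=\mathrm e^{\mathrm i\theta_\phi}\phi_g$, $w_\phi=\mathrm e^{\mathrm i\theta_\phi}v_\phi$, $v_\phi\in\{\mathrm i\phi_g\}_{H^1}^\perp$ and $\|w_\phi\|_{H^1}=\mathrm{dist}_{H^1}[\phi,\phi_g]\le\delta$.

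\emph{Step 1: reduce to the unrotated ground state.} By phase invariance,
\[
S_{\Omega,\omega}(\phi)-S_{\Omega,\omega}(\phi_g)=S_{\Omega,\omega}(\phi_g+v_\phi)-S_{\Omega,\omega}(\phi_g).
\]
It therefore suffices to work with $u=\phi_g$ and $w=v_\phi$. Since $\phi_g$ is a global minimizer on $X$, the first variation $\mathcal DS_{\Omega,\omega}(\phi_g)[\eta]$ vanishes for all $\eta\in X$.

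\emph{Step 2: integral-form Taylor expansion.} The function $\rho\mapsto S_{\Omega,\omega}(\phi_g+\rho v_\phi)$ is $C^2$, because $|\cdot|^{p+1}$ is $C^2$ for $p>1$ and the derivative formulas above apply. This gives
\[
S_{\Omega,\omega}(\phi_g+v_\phi)-S_{\Omega,\omega}(\phi_g)=\int_0^1(1-\rho)\,2\langle\mathcal L_{\phi_g+\rho v_\phi}v_\phi,v_\phi\rangle_{L^2(D)}\,\mathrm d\rho .
\]
I then split $\mathcal L_{\phi_g+\rho v}=\mathcal L_{\phi_g}+(\mathcal L_{2,\phi_g+\rho v}-\mathcal L_{2,\phi_g})$. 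The first piece contributes exactly $\tfrac12\mathcal D^2S_{\Omega,\omega}(\phi_g)[v_\phi,v_\phi]$. By \eqref{eq:L-L}, which is obtained from Lemma~\ref{lem:f2} together with the embedding $X\hookrightarrow L^{p+1}$, the second piece is $o(\|v_\phi\|_{H^1}^2)$ uniformly in $\rho\in[0,1]$ (the same argument works with $\phi_g$ in place of $u_\phi$, since the norms are phase invariant).

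\emph{Step 3: two-sided bounds.} For the upper bound, I use that $\mathcal D^2S_{\Omega,\omega}(\phi_g)$ is a bounded bilinear form on $X$. This follows from Lemma~\ref{lem:standard-inequalities}(iii) for the $L_z$ term, $V\in L^\infty$, and H\"older plus the embedding for the nonlinear terms. It gives $\mathcal D^2S_{\Omega,\omega}(\phi_g)[v,v]\lesssim\|v\|_{H^1}^2$. For the lower bound, since $v_\phi\in\{\mathrm i\phi_g\}_{H^1}^\perp$, assumption \eqref{eq:coercivity-second-variation} gives $\tfrac12\mathcal D^2S_{\Omega,\omega}(\phi_g)[v,v]\ge\tfrac c2\|v\|_{H^1}^2$. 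Choosing $\delta$ small so that the $o(\|v\|_{H^1}^2)$ remainder is at most $\tfrac c4\|v\|_{H^1}^2$, I get
\[
\tfrac c4\|v_\phi\|_{H^1}^2\le S_{\Omega,\omega}(\phi)-S_{\Omega,\omega}(\phi_g)\le C\|v_\phi\|_{H^1}^2,
\]
which is \eqref{eq:energy-gap-quadratic}.

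\emph{Step 4: local uniqueness.} If $\phi\in U_\delta(\phi_g)$ is a minimizer, then $S_{\Omega,\omega}(\phi)=S_{\Omega,\omega}(\phi_g)$. The lower bound forces $\mathrm{dist}_{H^1}[\phi,\phi_g]=0$, so $\phi\in\llbracket\phi_g\rrbracket$ by Lemma~\ref{lem:h1-distance}(ii). Conversely, every phase shift of $\phi_g$ is a minimizer by phase invariance.

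\emph{Main obstacle.} The delicate point is making the remainder in Step 2 uniformly small in $\rho$ given only $p>1$, so that no third derivatives are available. This is exactly what the continuity in Lemma~\ref{lem:f2} provides. The argument needs $\|(\phi_g+\rho v)-\phi_g\|_{L^{p+1}}\to0$ uniformly in $\rho$, which holds since $\rho\|v\|_{L^{p+1}}\le C_E\|v\|_{H^1}$. A minor secondary check is that the tangent condition is the $H^1$-orthogonality that Lemma~\ref{prop:unique-theta} delivers, which matches the subspace appearing in \eqref{eq:coercivity-second-variation}.
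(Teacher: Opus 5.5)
Your proposal is correct and takes essentially the same route as the paper: an integral-form Taylor expansion, control of the second-variation remainder through the Nemytskii continuity of Lemma~\ref{lem:f2} (i.e.\ \eqref{eq:L-L}), coercivity on $\{\mathrm i\phi_g\}_{H^1}^\perp$ for the lower bound, and boundedness of $\mathcal{D}^2S_{\Omega,\omega}(\phi_g)$ for the upper bound. The differences are cosmetic. You use phase invariance of $S_{\Omega,\omega}$ to expand about $\phi_g$ directly rather than about $u_\phi$, which makes the uniformity of the remainder a bit more transparent. You also spell out the local-uniqueness conclusion, which the paper leaves implicit.
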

\begin{proof}
The condition \eqref{eq:coercivity-second-variation} gives $\left\langle \mathcal{L}_{\phi_{g}} \eta, \eta \right\rangle_{L^{2}(D)} \ge \frac{c}{2}\left\| \eta \right\|_{H^{1}}^{2}$, and, moreover,
	\begin{align*}
		\left| \left\langle \mathcal{L}_{\phi_{g}}\eta, \eta \right\rangle \right| \lesssim \left\| \eta \right\|_{H^{1}}^{2} + \beta\, p\, \left\| \phi_{g} \right\|_{L^{p+1}}^{p-1} \left\| \eta \right\|_{L^{p+1}}^{2} \lesssim \left\| \eta \right\|_{H^{1}}^{2},
	\end{align*}
	hence $\left\langle \mathcal{L}_{\phi_{g}}\eta, \eta \right\rangle_{L^{2}(D)} \asymp \left\| \eta \right\|_{H^{1}}^{2}$ for all $\eta\in \{\mathrm i \phi_{g}\}_{H^{1}}^{\perp}$. According to the Taylor expansion, there is
	\begin{align*}
		&S_{\Omega,\omega}(\phi) - S_{\Omega,\omega}(\phi_{g}) = S_{\Omega,\omega}(\phi) - S_{\Omega,\omega}(u_{\phi}) \\
		=&\, \mathcal{D}S_{\Omega,\omega}(u_{\phi})[w_{\phi}] + \int_{0}^{1}{ (1-\rho)\mathcal{D}^{2}S_{\Omega,\omega}(u_{\phi} + \rho w_{\phi})[w_{\phi}, w_{\phi}] }\, \mathrm{d}\rho \\
		=&\,\frac{1}{2}\mathcal{D}^{2}S_{\Omega,\omega}(u_{\phi})[w_{\phi}, w_{\phi}]  + \int_{0}^{1}{ (1-\rho)\left( \mathcal{D}^{2}S_{\Omega,\omega}(u_{\phi} + \rho w_{\phi})[w_{\phi}, w_{\phi}] - \mathcal{D}^{2}S_{\Omega,\omega}(u_{\phi})[w_{\phi}, w_{\phi}] \right) }\, \mathrm{d}\rho \\
		=&\, \frac{1}{2}\mathcal{D}^{2}S_{\Omega,\omega}(u_{\phi})[w_{\phi}, w_{\phi}] + \int_{0}^{1}{ (1-\rho)\left\langle \left( \mathcal{L}_{2, u_{\phi} + \rho w_{\phi}} - \mathcal{L}_{2, u_{\phi}} \right) w_{\phi}, w_{\phi} \right\rangle_{L^{2}(D)} }\, \mathrm{d}\rho \\
		=&\, \frac{1}{2}\mathcal{D}^{2}S_{\Omega,\omega}(u_{\phi})[w_{\phi}, w_{\phi}] + o\left( \left\| w_{\phi} \right\|_{H^{1}}^{2} \right), 
	\end{align*}
	where we used \eqref{eq:L-L} and the first-order critical condition $\mathcal{D}S_{\Omega,\omega}(u_{\phi})[w_{\phi}]=0$. Noting that
	\begin{align*}
		\mathcal{D}^{2}S_{\Omega,\omega}(u_{\phi})[w_{\phi}, w_{\phi}] = \mathcal{D}^{2}S_{\Omega,\omega}(\mathrm{e}^{\mathrm{i} \theta_{\phi}}\phi_{g})[\mathrm{e}^{\mathrm{i} \theta_{\phi}} v_{\phi}, \mathrm{e}^{\mathrm{i} \theta_{\phi}}v_{\phi}] = \mathcal{D}^{2}S_{\Omega,\omega}(\phi_{g})[ v_{\phi}, v_{\phi}],
	\end{align*}
	we conclude that for all $u\in U_{\delta}(\phi_{g})$ with  $\delta>0$ small enough,
	\begin{align*}
		S_{\Omega,\omega}(\phi) - S_{\Omega,\omega}(\phi_{g}) =&\, \frac{1}{2} \mathcal{D}^{2}S_{\Omega,\omega}(\phi_{g})[ v_{\phi}, v_{\phi}] + o\left( \left\| v_{\phi} \right\|_{H^{1}}^{2} \right) \\
        =&\, \left\langle \mathcal{L}_{\phi_{g}}v_{\phi}, v_{\phi} \right\rangle_{L^{2}(D)} + o\left( \left\| v_{\phi} \right\|_{H^{1}}^{2} \right) \\
		\asymp&\, \left\| v_{\phi} \right\|_{H^{1}}^{2} = \left( \mathrm{dist}_{H^{1}}\left[ \phi, \phi_{g} \right] \right)^{2}.
	\end{align*}
	This proves the proposition.
\end{proof}

\begin{remark}
The non-degeneracy assumption can also be characterized via the $L^2$-orthogonal complement (similar to, e.g., \cite{henning2025discrete}):
\[ 
\left\{ \mathrm i \phi_{g} \right\}_{L^{2}}^{\perp} := \big\{ \xi \in X \,\colon\, \left\langle \mathrm{i}\phi_{g}, \xi \right\rangle_{L^{2}(D)} = 0 \big\}. 
\]
In that case, the condition \eqref{eq:coercivity-second-variation} is replaced by
    \begin{align}
		\mathcal{D}^{2}S_{\Omega,\omega} (\phi_{g}) [\xi,\xi] \ge c\left\| \xi \right\|_{H^{1}}^{2}, \quad \forall \, \xi\in \{\mathrm i \phi_{g}\}_{L^{2}}^{\perp}. \label{eq:coercivity-second-variation-L2}
	\end{align}
    Here we show that \eqref{eq:coercivity-second-variation-L2} is the stronger requirement, as it implies \eqref{eq:coercivity-second-variation}. In fact, for any $\eta \in \left\{ \mathrm i \phi_{g} \right\}_{H^{1}}^{\perp}$, we have $L^{2}$-orthogonal decomposition as
    \begin{align*}
    	\eta = \mathrm i \kappa \phi_{g} + \xi 
        \quad\;\mbox{with}\;\; 
    	\kappa = \frac{\left\langle  \mathrm{i}\phi_{g}, \eta \right\rangle_{L^{2}(D)}}{\left\| \phi_{g} \right\|_{L^{2}}^{2}} \in \mathbb{R}
        \;\;\mbox{and}\;\;
        \xi = \eta - \frac{\mathrm i \left\langle  \mathrm{i}\phi_{g}, \eta \right\rangle_{L^{2}(D)} \phi_{g}}{\left\| \phi_{g} \right\|_{L^{2}}^{2}} \in \left\{ \mathrm i \phi_{g} \right\}_{L^{2}}^{\perp}.
    \end{align*}
    Substituting this decomposition into $\mathcal{D}^{2}S_{\Omega,\omega}(\phi_{g})[\eta,\eta]$ yields
    \begin{align*}
    	\mathcal{D}^{2}S_{\Omega,\omega}(\phi_{g})[\eta,\eta] =&\, \kappa^{2} \mathcal{D}^{2}S_{\Omega,\omega}(\phi_{g})[\mathrm{i}\phi_{g},\mathrm{i}\phi_{g}] + 2 \kappa \mathcal{D}^{2}S_{\Omega,\omega}(\phi_{g})[\mathrm{i}\phi_{g}, \xi] + \mathcal{D}^{2}S_{\Omega,\omega}(\phi_{g})[\xi, \xi] \\
        =&\, 2 \kappa^{2} \left\langle \mathrm{i} H_{\phi_{g}}\phi_{g}, \mathrm{i}\phi_{g} \right\rangle_{L^{2}(D)} + 4 \kappa \left\langle \mathrm{i} H_{\phi_{g}}\phi_{g}, \xi \right\rangle_{L^{2}(D)} + \mathcal{D}^{2}S_{\Omega,\omega}(\phi_{g})[\xi, \xi] \\
        =&\, \mathcal{D}^{2}S_{\Omega,\omega}(\phi_{g})[\xi, \xi].
    \end{align*}
    On the other hand, 
    \begin{align*}
    	\left\| \eta \right\|_{H^{1}}^{2} =&\, \kappa^{2} \left\| \phi_{g} \right\|_{H^{1}}^{2} + 2\kappa \left\langle \mathrm{i}\phi_{g}, \xi \right\rangle_{H^{1}(D)} + \left\| \xi \right\|_{H^{1}}^{2} \\
        =&\, \kappa^{2} \left\| \phi_{g} \right\|_{H^{1}}^{2} + 2\kappa \left\langle \mathrm{i}\phi_{g}, \eta - \mathrm{i}\kappa \phi_{g} \right\rangle_{H^{1}(D)} + \left\| \xi \right\|_{H^{1}}^{2} \\
        =&\, -\kappa^{2} \left\| \phi_{g} \right\|_{H^{1}}^{2} + \left\| \xi \right\|_{H^{1}}^{2},
    \end{align*}
    which means $\left\| \eta \right\|_{H^{1}} \le \left\| \xi \right\|_{H^{1}}$. Therefore, by \eqref{eq:coercivity-second-variation-L2},
    \begin{align*}
    	\mathcal{D}^{2}S_{\Omega,\omega}(\phi_{g})[\eta,\eta] = \mathcal{D}^{2}S_{\Omega,\omega}(\phi_{g})[\xi, \xi] \ge c\left\| \xi \right\|_{H^{1}}^{2} \ge c\left\| \eta \right\|_{H^{1}}^{2}, \quad \forall \eta \in \left\{ \mathrm i \phi_{g} \right\}_{H^{1}}^{\perp},
    \end{align*}
    which is precisely \eqref{eq:coercivity-second-variation}.
\end{remark}

\subsubsection{Exponential convergence}

Now we are ready to present the convergence rate result for the DGF scheme \eqref{eq:GF-semi-discretization}. With the numerical solution $\phi^{n}$, we denote $\theta_{n} = \theta_{\phi^{n}}$, $u^{n} = u_{\phi^{n}}$, $v^{n} = v_{\phi^{n}}$ and $w^{n} = w_{\phi^{n}}$ for simplicity of notation. 

\begin{theorem}[Exponential rate]\label{thm:exponential-convergence-ground-state}
	Suppose that Assumption~\ref{assump:potential-condition} holds and $\omega < -\lambda_{0}$. Let $\phi_{g}$ be the GS defined by \eqref{gs unconstrain def} and suppose that it satisfies the non-degeneracy condition in Proposition~\ref{prop:coercive-second-variation}. Then, there exists a constant $\delta_{0}>0$ such that for any initial data $\phi^{0} \in U_{\delta_{0}}(\phi_{g})$, $\alpha \ge \alpha_{M} + \frac{1}{8}$ and $\tau>0$, the DGF scheme \eqref{eq:GF-semi-discretization} converges to the GS at the rate:
	\begin{align}\label{eq:exponential-convergence-to-ground-state}
		\mathrm{dist}_{H^{1}}\left[ \phi^{n}, \phi_{g} \right] \le C\,\mathrm{e}^{-a n \tau/(1+\tau)}, \qquad \forall\, n \in \mathbb{N},
	\end{align}
	where $C, a > 0$ are constants independent of $n$ and $\tau$.
\end{theorem}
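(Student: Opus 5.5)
The plan is to run a discrete Łojasiewicz-type argument on the action gap $e_n := S_{\Omega,\omega}(\phi^n) - S_{\Omega,\omega}(\phi_g)\ge 0$. It combines three ingredients already available: the decay estimate of Theorem~\ref{thm:action_decaying}, the quadratic gap equivalence \eqref{eq:energy-gap-quadratic} of Proposition~\ref{prop:coercive-second-variation}, and a local gradient inequality
\begin{align*}
e_n \lesssim \|H_{\phi^n}\phi^n\|_{H^{-1}}^2 ,
\end{align*}
valid while $\phi^n\in U_\delta(\phi_g)$. Given these, the rate follows by a one-step contraction $e_{n+1}\le (1-c\tau/(1+\tau))e_n$ and an induction that keeps the iterates inside the neighbourhood.

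\textbf{Step 1 (gradient inequality).} We write $\phi^n = u^n + w^n$ with $w^n\perp_{H^1}\mathrm{i}u^n$ as in Lemma~\ref{prop:unique-theta}. Since $H_{u^n}u^n=0$, Taylor expansion together with \eqref{eq:L-L} gives
\begin{align*}
\langle H_{\phi^n}\phi^n, w^n\rangle_{L^2(D)} = \langle \mathcal{L}_{u^n} w^n, w^n\rangle_{L^2(D)} + o(\|w^n\|_{H^1}^2).
\end{align*}
By phase invariance of $\mathcal{L}$ and the coercivity \eqref{eq:coercivity-second-variation}, this is $\gtrsim\|w^n\|_{H^1}^2$. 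Hence $\|w^n\|_{H^1}\lesssim \|H_{\phi^n}\phi^n\|_{H^{-1}}$, and \eqref{eq:energy-gap-quadratic} yields the displayed inequality.

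\textbf{Step 2 (link to $\mu^{n+1}$).} From the resolvent equation \eqref{eq:1-ellipse-equation-mu} we have $\langle H_{\phi^n}\phi^n,\psi\rangle = (1+\tau\alpha)\langle\mu^{n+1},\psi\rangle+\frac{\tau}{2}\langle\nabla\mu^{n+1},\nabla\psi\rangle$. This gives
\begin{align*}
\|H_{\phi^n}\phi^n\|_{H^{-1}}^2\lesssim (1+\tau)^2\|\mu^{n+1}\|_{L^2}^2+\tau^2\|\nabla\mu^{n+1}\|_{L^2}^2 .
\end{align*}
This is the main obstacle, because we must compare it with the dissipation $D_n:=2\tau\|\mu^{n+1}\|_{L^2}^2+\frac{\tau^2}{4}\|\mu^{n+1}\|_{H^1}^2$ uniformly in $\tau$. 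For $\tau\le1$ the right side is $\lesssim \|\mu^{n+1}\|_{L^2}^2 + \tau^2\|\nabla\mu^{n+1}\|^2\lesssim \tau^{-1}D_n$. For $\tau>1$ the term $\tau^2\alpha^2\|\mu\|_{L^2}^2$ is too large compared with $\tau^2\|\mu\|^2_{L^2}$. I would handle it by testing against $w^n$ directly rather than taking the full dual norm: $\langle H\phi^n,w^n\rangle\le (1+\tau\alpha)\|\mu\|_{L^2}\|w\|_{L^2}+\frac{\tau}{2}\|\nabla\mu\|\|\nabla w\|$. Then $\|w^n\|^2_{H^1}\lesssim (1+\tau)^2\|\mu^{n+1}\|_{H^1}^2$, and $D_n\gtrsim \min(\tau,\tau^2)\|\mu\|_{H^1}^2$ (using the $\tau^2\|\mu\|_{L^2}^2$ part that the proof of Theorem~\ref{thm:action_decaying} actually retains via $\alpha\ge\alpha_M+\tfrac18$). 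This gives
\begin{align*}
e_n\lesssim \frac{(1+\tau)^2}{\min(\tau,\tau^2)}\,D_n\asymp\frac{1+\tau}{\tau}\cdot(1+\tau)\ldots
\end{align*}
The powers must be balanced carefully. If a factor $(1+\tau)$ is lost here, I would instead use the finer identity \eqref{eq:Somega-gap-2}, which keeps $2\alpha\tau^2\|\mu\|_{L^2}^2$ in the dissipation. With that term, $D_n\gtrsim \tau(1+\tau)\|\mu\|_{L^2}^2+\tau^2\|\nabla\mu\|^2$, and the target becomes $e_n\lesssim \frac{1+\tau}{\tau}D_n$.

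\textbf{Step 3 (contraction and induction).} With $e_n - e_{n+1}\ge D_n\ge c\frac{\tau}{1+\tau}e_n$ we get $e_n\le (1-c\tau/(1+\tau))^n e_0\le e_0\,\mathrm{e}^{-cn\tau/(1+\tau)}$. Then \eqref{eq:energy-gap-quadratic} converts this into the distance bound with $a=c/2$. To stay in $U_\delta(\phi_g)$, we choose $\delta_0$ so small that $C_*\delta_0^2$, which bounds $e_0$ via \eqref{eq:energy-gap-quadratic}, forces $\mathrm{dist}_{H^1}[\phi^{k},\phi_g]^2\lesssim e_k\le e_0$ to remain below $\delta^2$. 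The step from $\phi^n\in U_\delta$ to $\phi^{n+1}$ uses monotonicity $e_{n+1}\le e_n$. However, \eqref{eq:energy-gap-quadratic} only applies once $\phi^{n+1}$ is already near the orbit, so I would add a continuity argument: $\|\phi^{n+1}-\phi^n\|_{H^1}=\tau\|\mu^{n+1}\|_{H^1}\lesssim\sqrt{e_n}$, by the decay estimate applied to the $\tau^2\|\mu\|_{H^1}^2$ term. This places $\phi^{n+1}$ in $U_{2\delta}$, where the quadratic equivalence still holds after shrinking $\delta$, so the induction closes.
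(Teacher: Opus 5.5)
Your proposal is correct and follows essentially the same route as the paper: the \L{}ojasiewicz-type bound $e_n\lesssim\|H_{\phi^n}\phi^n\|_{H^{-1}}^2$ of Lemma~\ref{lem:Lojasiewicz-Inequality}, then the resolvent identity \eqref{eq:1-ellipse-equation-mu} combined with \eqref{eq:S_decaying} to get $\tau\|H_{\phi^n}\phi^n\|_{H^{-1}}^2\lesssim(1+\tau)(e_n-e_{n+1})$, then the resulting contraction, and finally a step-size-plus-monotonicity argument that keeps the iterates in $U_\delta(\phi_g)$ (you close this by direct induction, the paper by contradiction in Lemma~\ref{lem:local-stability-ground-state}). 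The hesitation in your Step 2 is unnecessary: with $\alpha$ fixed and absorbed into the constants (as the paper does), \eqref{eq:S_decaying} already contains $\tfrac{\tau^2}{4}\|\mu^{n+1}\|_{L^2}^2$, so $D_n\gtrsim\tau(1+\tau)\|\mu^{n+1}\|_{L^2}^2+\tau^2\|\nabla\mu^{n+1}\|_{L^2}^2$ and $e_n\lesssim\frac{1+\tau}{\tau}D_n$ follow at once without \eqref{eq:Somega-gap-2}, whereas your abandoned detour through $\|\mu^{n+1}\|_{H^1}$ with $D_n\gtrsim\min(\tau,\tau^2)\|\mu^{n+1}\|_{H^1}^2$ would lose a power of $\tau$.
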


We then turn to the proof of Theorem \ref{thm:exponential-convergence-ground-state}. As a preparation, we give some estimations derived from the action decaying property. We can show that the numerical solution $ \phi^{n} $ for all $n\in \mathbb{N}$ can stay within a local range of $U_{\delta}(\phi_{g})$, which is crucial.

\begin{lemma}[Lyapunov stability]\label{lem:local-stability-ground-state}
	Let the assumptions in Theorem~\ref{thm:exponential-convergence-ground-state} hold and let $\delta>0$ be a sufficiently small constant that meets the requirements of Lemma~\ref{prop:unique-theta} and Proposition~\ref{prop:coercive-second-variation}. Then, there exists a  $\delta_{0}>0$ such that for all $\phi^{0} \in U_{\delta_{0}}(\phi_{g})$, $\alpha \ge \alpha_{M} + \frac{1}{8}$ and $\tau>0$, the solution of \eqref{eq:GF-semi-discretization} with initial guess $\phi^{0}$ satisfies $\phi^{n}\in U_{\delta}(\phi_{g})$.
\end{lemma}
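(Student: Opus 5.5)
The argument is the usual Lyapunov-stability route: combine the monotone decay of the action from Theorem~\ref{thm:action_decaying} with the quadratic growth estimate \eqref{eq:energy-gap-quadratic} from Proposition~\ref{prop:coercive-second-variation}. The one real difficulty is that decay of $S_{\Omega,\omega}$ alone does not prevent an iterate from leaving $U_{\delta}(\phi_g)$ in a single large step. Such a step could land in a region where $S_{\Omega,\omega}$ is again small, for instance near another phase orbit of minimizers, where \eqref{eq:energy-gap-quadratic} no longer applies.

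Step 1 converts the action gap into a distance bound. By Proposition~\ref{prop:coercive-second-variation} there are constants $c_1,c_2>0$ such that, for every $\phi\in U_{\delta}(\phi_g)$,
\begin{align*}
c_1\left(\mathrm{dist}_{H^1}[\phi,\phi_g]\right)^2\le S_{\Omega,\omega}(\phi)-S_{\Omega,\omega}(\phi_g)\le c_2\left(\mathrm{dist}_{H^1}[\phi,\phi_g]\right)^2 .
\end{align*}
Because $S_{\Omega,\omega}(\phi_g)$ is the global minimum, Theorem~\ref{thm:action_decaying} gives, for all $n$,
\begin{align*}
0\le S_{\Omega,\omega}(\phi^n)-S_{\Omega,\omega}(\phi_g)\le S_{\Omega,\omega}(\phi^0)-S_{\Omega,\omega}(\phi_g)\le c_2\delta_0^2 .
\end{align*}
Consequently, for any index $n$ with $\phi^n\in U_\delta(\phi_g)$, we get $\mathrm{dist}_{H^1}[\phi^n,\phi_g]\le\sqrt{c_2/c_1}\,\delta_0$. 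Choosing $\delta_0$ so that $\sqrt{c_2/c_1}\,\delta_0\le\delta/2$ keeps the iterate in the inner half $U_{\delta/2}(\phi_g)$.

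Step 2 is an induction that rules out jumping across the annulus $\delta/2<\mathrm{dist}\le\delta$. I see two options.

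\emph{Option (a): step control.} Suppose $\phi^n\in U_{\delta/2}(\phi_g)$. Theorem~\ref{thm:action_decaying} gives
\begin{align*}
\|\phi^{n+1}-\phi^n\|_{H^1}^2=\tau^2\|\mu^{n+1}\|_{H^1}^2\le 4\left(S_{\Omega,\omega}(\phi^n)-S_{\Omega,\omega}(\phi^{n+1})\right)\le 4\left(S_{\Omega,\omega}(\phi^n)-S_{\Omega,\omega}(\phi_g)\right)\le 4c_2\delta_0^2 .
\end{align*}
By the triangle inequality of Lemma~\ref{lem:h1-distance}, $\mathrm{dist}_{H^1}[\phi^{n+1},\phi_g]\le(\sqrt{c_2/c_1}+2\sqrt{c_2})\,\delta_0$. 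This is at most $\delta/2$ once $\delta_0$ is small enough, and the bound is uniform in $\tau$ because the estimate \eqref{eq:S_decaying} carries the $\tau^2/4$ factor. Starting from $\phi^0\in U_{\delta_0}(\phi_g)\subset U_{\delta/2}(\phi_g)$, the induction then gives $\phi^n\in U_{\delta/2}(\phi_g)\subset U_\delta(\phi_g)$ for all $n$.

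\emph{Option (b), backup if the constants do not close cleanly: continuity.} The map $\phi^n\mapsto\phi^{n+1}$ given by \eqref{eq:I-rearrange} is Lipschitz on bounded sets, and $\phi_g$ is a fixed point since $H_{\phi_g}\phi_g=0$. However, this Lipschitz constant depends on $\tau$, so Option (a) is preferable.

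I expect the main obstacle to be the bookkeeping in Option (a). The step bound must be applied at level $n$ while the induction hypothesis $\phi^n\in U_{\delta/2}(\phi_g)$ is in force, so that Step 1 legitimately applies to $\phi^n$. One must also check that $\phi^0\in U_{\delta_0}(\phi_g)$ lies in a sublevel set $S_{\le m}$ with $m$ independent of $\delta_0$, so that $\alpha_M$ is fixed in advance. This holds because $S_{\Omega,\omega}(\phi^0)\le S_{\Omega,\omega}(\phi_g)+c_2\delta_0^2<0$ for small $\delta_0$, by Proposition~\ref{lem:GS-existence}(ii)--(iii), so $m=0$ can be taken.
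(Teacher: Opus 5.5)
Your Option (a) is correct and is essentially the paper's argument: the paper likewise bounds every step, uniformly in $\tau$, by $\tau\|\mu^{n+1}\|_{H^1}\lesssim\sqrt{S_{\Omega,\omega}(\phi^0)-S_{\Omega,\omega}(\phi_g)}\lesssim\delta_0$ using \eqref{eq:S_decaying} and the quadratic equivalence, and then applies the orbit-distance triangle inequality; it phrases this as a contradiction (the first iterate with $\delta/2<\mathrm{dist}_{H^{1}}[\phi^{n},\phi_{g}]\le\delta$ would have action gap above $c_2\delta_0^2$), where you use a direct induction. Your extra check that $S_{\Omega,\omega}(\phi^0)<0$ for small $\delta_0$, so that $m=0$ and hence $\alpha_M$ can be fixed independently of $\phi^0$, is a useful detail the paper leaves implicit.
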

\begin{proof}
	From \eqref{eq:energy-gap-quadratic}, there exist constants $0<c_{1}\leq c_{2}$ such that
	\begin{align}\label{eq:energy-gap-quadratic-c1c2}
		c_1 \, \left( \mathrm{dist}_{H^{1}}\left[ \phi, \phi_{g} \right] \right)^{2} \leq S_{\Omega, \omega}(\phi) - S_{\Omega, \omega}(\phi_{g}) \leq c_2 \, \left( \mathrm{dist}_{H^{1}}\left[ \phi, \phi_{g} \right] \right)^{2}.
	\end{align}
	From the decay of action functional \eqref{eq:S_decaying}, we have
	\begin{align*}
		\tau^{2} \left\| \mu^{n+1} \right\|_{H^{1}}^{2} \leq 4\left( S_{\Omega, \omega}(\phi^{n}) - S_{\Omega, \omega}(\phi^{n+1}) \right), \quad 
	\end{align*}
	which means
	\begin{align*}
		\mathrm{dist}_{H^{1}}\left[ \phi^{n+1}, \phi^{n} \right] =&\, \inf_{\theta \in \mathbb{R}} \left\| \phi^{n+1} - \mathrm{e}^{\mathrm{i} \theta} \phi^{n} \right\|_{H^{1}} \leq \left\| \phi^{n+1} - \phi^{n} \right\|_{H^{1}} = \tau\left\| \mu^{n+1} \right\|_{H^{1}}\\ 
		\lesssim&\,  \sqrt{ S_{\Omega, \omega}(\phi^{n}) - S_{\Omega, \omega}(\phi^{n+1}) } \lesssim  \sqrt{ S_{\Omega, \omega}(\phi^{0}) - S_{\Omega, \omega}(\phi_{g}) } \lesssim \mathrm{dist}_{H^{1}}\left[ \phi^{0}, \phi_{g} \right], \;\;\; \forall n\in\mathbb{N}.
	\end{align*}
	Therefore, we can say for a constant $c_{3}>0$ that
	\begin{align}\label{eq:un-step-less-u0ug}
		\mathrm{dist}_{H^{1}}\left[ \phi^{n+1}, \phi^{n} \right] \leq c_{3} \, \mathrm{dist}_{H^{1}}\left[ \phi^{0}, \phi_{g} \right] \leq c_{3}\delta_{0}.
	\end{align}
	Fix $\delta_{0}>0$ such that $\delta_{0}<\frac{\delta}{2}\min\left\{ \sqrt{\frac{c_1}{c_2}}, \frac{1}{c_3}\right\}$. We now proceed to show that the assertion of the lemma holds for this $\delta_{0}$. We argue by contradiction, supposing that there exists $n_{1}\in \mathbb{N}^{+}$ with $\mathrm{dist}_{H^{1}}\left[ \phi^{n_{1}}, \phi_{g} \right] > \delta$. By \eqref{eq:un-step-less-u0ug}, $\mathrm{dist}_{H^{1}}\left[ \phi^{n+1}, \phi^{n} \right] \leq c_{3}\delta_{0}<\delta/2$. Combining with the triangle inequality of $\mathrm{dist}_{H^{1}}$ in Lemma~\ref{lem:h1-distance}, there exists $0 \neq n_{2} < n_{1}$ such that $\delta/2 <\mathrm{dist}_{H^{1}}\left[ \phi^{n_{2}}, \phi_{g} \right] \leq \delta$. Setting $\phi$ in \eqref{eq:energy-gap-quadratic-c1c2} as $\phi^{0}$ and $\phi^{n_{2}}$, respectively, one obtains
	\begin{align*}
		S_{\Omega, \omega}(\phi^{n_{2}}) -S_{\Omega, \omega}(\phi_{g}) &\geq  c_1 \left( \mathrm{dist}_{H^{1}}\left[ \phi^{n_{2}}, \phi_{g} \right] \right)^{2} 
		> c_1\delta^{2}/4, \\
		S_{\Omega, \omega}(\phi^{0}) -S_{\Omega, \omega}(\phi_{g}) &\leq c_2 \left( \mathrm{dist}_{H^{1}}\left[ \phi^{0}, \phi_{g} \right] \right)^{2} \leq c_2\delta_{0}^{2}.
	\end{align*}
	By choosing $\delta_{0}$, we can have $c_2\delta_0^2<c_1\delta^2/4$ and so $S_{\Omega, \omega}(\phi^{0}) < S_{\Omega, \omega}(\phi^{n_{2}})$, which contradicts the decay of action in Lemma~\ref{thm:action_decaying}.
\end{proof}

Under the non-degeneracy condition, we can establish the following \L{}ojasiewicz-type gradient inequality near the GS as the last tool.

\begin{lemma}[\L{}ojasiewicz-type gradient inequality]\label{lem:Lojasiewicz-Inequality}
	Suppose that $\omega < -\lambda_{0}$, and that Assumption~\ref{assump:potential-condition} as well as the non-degeneracy condition in Proposition~\ref{prop:coercive-second-variation} hold for the ground state $\phi_{g}$. Then, there exists a sufficiently small $\delta>0$ such that
	\begin{align}
		S_{\Omega, \omega}(\phi) - S_{\Omega, \omega}(\phi_{g}) \lesssim \left\| H_{\phi}\phi \right\|_{H^{-1}}^{2},
		\quad \forall\, \phi\in U_{\delta}(\phi_{g}). \label{eq:Lojasiewicz-Inequality}
	\end{align}
\end{lemma}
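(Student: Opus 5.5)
The plan is to use the quadratic energy gap of Proposition~\ref{prop:coercive-second-variation}, which bounds $S_{\Omega,\omega}(\phi)-S_{\Omega,\omega}(\phi_g)$ from above by $c_2\|w_\phi\|_{H^1}^2$. It then suffices to show the linear lower bound
\begin{align*}
\|w_\phi\|_{H^1}\lesssim \|H_\phi\phi\|_{H^{-1}},\qquad \phi\in U_\delta(\phi_g).
\end{align*}
We obtain it by testing the residual $H_\phi\phi$ against $w_\phi$ itself, exploiting the orthogonality $w_\phi\in\{\mathrm i u_\phi\}^{\perp}_{H^1}$ from Lemma~\ref{prop:unique-theta}.

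First, by definition of the $H^{-1}$ norm,
\begin{align*}
\langle H_\phi\phi, w_\phi\rangle_{L^2(D)}\le \|H_\phi\phi\|_{H^{-1}}\|w_\phi\|_{H^1}.
\end{align*}
Next, we expand $H_\phi\phi$ around $u_\phi=\mathrm e^{\mathrm i\theta_\phi}\phi_g$, which is itself a critical point since $H_{u_\phi}u_\phi=\mathrm e^{\mathrm i\theta_\phi}H_{\phi_g}\phi_g=0$. We write
\begin{align*}
H_\phi\phi-H_{u_\phi}u_\phi=\int_0^1\mathcal L_{u_\phi+\rho w_\phi}w_\phi\,\mathrm d\rho,
\end{align*}
which is the same integral identity for $|\cdot|^{p-1}\cdot$ already used in the proof of Theorem~\ref{thm:convergent-subsequences}. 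This gives
\begin{align*}
\langle H_\phi\phi,w_\phi\rangle_{L^2(D)}=\langle\mathcal L_{u_\phi}w_\phi,w_\phi\rangle_{L^2(D)}+\int_0^1\big\langle(\mathcal L_{2,u_\phi+\rho w_\phi}-\mathcal L_{2,u_\phi})w_\phi,w_\phi\big\rangle_{L^2(D)}\mathrm d\rho .
\end{align*}
The remainder term is $o(\|w_\phi\|_{H^1}^2)$ by \eqref{eq:L-L}. Lemma~\ref{lem:f2} controls it uniformly, because $u_\phi$ ranges over the compact phase orbit and the continuity holds uniformly in $\theta$ by the phase equivariance $|\mathrm e^{\mathrm i\theta}u|^{p-3}(\mathrm e^{\mathrm i\theta}u)^2=\mathrm e^{2\mathrm i\theta}|u|^{p-3}u^2$.

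For the main term we use phase invariance: $\langle\mathcal L_{u_\phi}w_\phi,w_\phi\rangle=\langle\mathcal L_{\phi_g}v_\phi,v_\phi\rangle$ with $v_\phi\in\{\mathrm i\phi_g\}^\perp_{H^1}$. The non-degeneracy condition \eqref{eq:coercivity-second-variation} then gives a lower bound $\frac c2\|v_\phi\|_{H^1}^2=\frac c2\|w_\phi\|_{H^1}^2$. Choosing $\delta$ small enough that the $o(\cdot)$ term is at most $\frac c4\|w_\phi\|_{H^1}^2$, we get
\begin{align*}
\tfrac c4\|w_\phi\|_{H^1}^2\le\|H_\phi\phi\|_{H^{-1}}\|w_\phi\|_{H^1},
\end{align*}
and hence $\|w_\phi\|_{H^1}\le\frac4c\|H_\phi\phi\|_{H^{-1}}$. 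Squaring this and combining it with \eqref{eq:energy-gap-quadratic-c1c2} yields \eqref{eq:Lojasiewicz-Inequality}.

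The main obstacle is the rigorous control of the remainder. Lemma~\ref{lem:f2} gives only continuity, not a modulus, so the $o(\|w_\phi\|^2_{H^1})$ must be understood as $\epsilon(\delta)\|w_\phi\|_{H^1}^2$ with $\epsilon(\delta)\to0$ as $\delta\to0$. This requires that $\|w_\phi\|_{L^{p+1}}\le C_E\|w_\phi\|_{H^1}\le C_E\delta$ forces $u_\phi+\rho w_\phi\to u_\phi$ in $L^{p+1}$ uniformly in $\theta_\phi$ and $\rho$. We will obtain this by reducing everything to $\phi_g$ via multiplication by $\mathrm e^{-\mathrm i\theta_\phi}$, so that only continuity at the single point $\phi_g$ is needed. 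A secondary point is checking that the $L^2$ pairing of $H_\phi\phi\in H^{-1}$ with $w_\phi$ agrees with the real part of the duality pairing, which holds by definition.
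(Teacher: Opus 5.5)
Your proposal is correct and follows essentially the same route as the paper. You test the residual $H_\phi\phi$ against $w_\phi$, expand around the critical point $u_\phi$ via the integral identity, and use phase invariance to reduce the main term to $\langle \mathcal{L}_{\phi_g}v_\phi,v_\phi\rangle_{L^2(D)}$ with $v_\phi\in\{\mathrm i\phi_g\}^{\perp}_{H^1}$. You then absorb the remainder from \eqref{eq:L-L} using the coercivity and conclude with the quadratic energy gap of Proposition~\ref{prop:coercive-second-variation}. Your explicit reduction of the remainder to continuity at the single point $\phi_g$ (via multiplication by $\mathrm e^{-\mathrm i\theta_\phi}$) also makes precise the uniformity in $\theta_\phi$ that the paper leaves implicit in its $o(\|w_\phi\|_{H^1}^2)$ notation.
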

\begin{proof}
	For every $\phi \in U_{\delta}(\phi_{g})$, we have 
	\begin{align*}
		\left\langle H_{\phi}\phi, w_{\phi} \right\rangle_{L^{2}(D)} =  \left\langle H_{\phi}\phi - H_{u_{\phi}}u_{\phi}, w_{\phi} \right\rangle_{L^{2}(D)} = \left\langle \mathcal{L}_{1}w_{\phi}, w_{\phi} \right\rangle_{L^{2}(D)} + \beta\left\langle |\phi|^{p-1}\phi - |u_{\phi}|^{p-1}u_{\phi}, w_{\phi} \right\rangle_{L^{2}(D)}.
	\end{align*}
	Noting that
	\begin{align}
		&\left\langle |\phi|^{p-1}\phi - |u_{\phi}|^{p-1}u_{\phi}, w_{\phi} \right\rangle_{L^{2}(D)}  
        = \int_{0}^{1}{ \left\langle \mathcal{L}_{2, u_{\phi} + \rho w_{\phi}} w_{\phi}, w_{\phi} \right\rangle_{L^{2}(D)} }\, \mathrm{d}\rho \nonumber\\
		&\qquad = \left\langle \frac{p-1}{2} |u_{\phi}|^{p-3} u_{\phi}^{2}\overline{w_{\phi}} + \frac{p+1}{2} |u_{\phi}|^{p-1}w_{\phi}, w_{\phi} \right\rangle_{L^{2}(D)}  \!+ \int_{0}^{1}{ \left\langle \left( \mathcal{L}_{2, u_{\phi} + \rho w_{\phi}} - \mathcal{L}_{2, u_{\phi}} \right) w_{\phi}, w_{\phi} \right\rangle_{L^{2}(D)} }\, \mathrm{d}\rho \nonumber\\
		&\qquad =\left\langle \frac{p-1}{2} |\phi_{g}|^{p-3} \phi_{g}^{2}\overline{v_{\phi}} + \frac{p+1}{2} |\phi_{g}|^{p-1}v_{\phi}, v_{\phi} \right\rangle_{L^{2}(D)} + o\left( \left\| w_{\phi} \right\|_{H^{1}}^{2} \right), \label{eq:high-order-esti-2}
	\end{align}
	we obtain
	\begin{align*}
		\left\langle H_{\phi}\phi, w_{\phi} \right\rangle_{L^{2}(D)} = \left\langle \mathcal{L}_{\phi_{g}}v_{\phi}, v_{\phi} \right\rangle_{L^{2}(D)} + o\,( \left\| w_{\phi} \right\|_{H^{1}}^{2} ).
	\end{align*}
	From the definition of $H^{-1}$-norm, we have
	\begin{align*}
		\left\| H_{\phi}\phi \right\|_{H^{-1}} \geq \frac{\left\langle \mathcal{L}_{\phi_{g}}v_{\phi}, v_{\phi} \right\rangle}{\left\| w_{\phi} \right\|_{H^{1}}} + o\,( \left\| w_{\phi} \right\|_{H^{1}} ) \ge \frac{c}{2}\left\| v_{\phi} \right\|_{H^{1}} + o\,( \left\| v_{\phi} \right\|_{H^{1}} ).
	\end{align*}
	Using the local equivalence between $\|v_{\phi}\|_{H^{1}}^2$ and $S_{\Omega, \omega}(\phi) - S_{\Omega, \omega}(\phi_{g})$, we find when $\delta>0$ is small enough, 
	\begin{align*}
		\|H_{\phi}\phi\|_{H^{-1}}
		\ge \frac{c}{4}\|v_{\phi}\|_{H^{1}}
		\gtrsim \sqrt{S_{\Omega, \omega}(\phi) - S_{\Omega, \omega}(\phi_{g})},
	\end{align*}
	which proves the assertion.
\end{proof}

The following result is a direct application of Lemma~\ref{lem:local-stability-ground-state} and Lemma~\ref{lem:Lojasiewicz-Inequality}:

\begin{corollary}\label{cor:Lojasiewicz-Gradient-Inequality}
	Under the conditions of Theorem~\ref{thm:exponential-convergence-ground-state}, there exists some $\delta_{0}>0$ such that for all $\phi^{0}\in U_{\delta_{0}}(\phi_{g})$, $\alpha \ge \alpha_{M} + \frac{1}{8}$ and $\tau>0$,
	\begin{align}
		S_{\Omega, \omega}(\phi^{n}) - S_{\Omega, \omega}(\phi_{g}) \le C_{Lo}\,\|H_{\phi^{n}} \phi^{n}\|_{H^{-1}}^{2},
		\quad \forall\, n\in\mathbb{N}, \label{eq:Lojasiewicz-Gradient-Inequality}
	\end{align}
	with $C_{Lo}>0$ a constant independent of $n,\tau$.
\end{corollary}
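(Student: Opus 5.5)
The corollary follows by combining Lemma~\ref{lem:local-stability-ground-state} with Lemma~\ref{lem:Lojasiewicz-Inequality}, with the neighborhood radii chosen compatibly.

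First, fix $\delta>0$ small enough to meet three requirements at once. It must satisfy the smallness conditions of Lemma~\ref{prop:unique-theta}, so that $\theta_\phi$, $u_\phi$, $v_\phi$, $w_\phi$ are well defined on $U_\delta(\phi_g)$. It must satisfy those of Proposition~\ref{prop:coercive-second-variation}, so that the quadratic equivalence \eqref{eq:energy-gap-quadratic-c1c2} holds. And it must satisfy those of Lemma~\ref{lem:Lojasiewicz-Inequality}, so that \eqref{eq:Lojasiewicz-Inequality} holds on $U_\delta(\phi_g)$ with some constant. Since each lemma only asks for $\delta$ sufficiently small, the minimum of the three admissible thresholds works.

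Call the implicit constant in \eqref{eq:Lojasiewicz-Inequality} $C_{Lo}$. It depends only on $c$, $\phi_g$, $\beta$, $p$, $D$ and $\delta$, through the coercivity constant and the equivalence constants $c_1,c_2$. It involves neither the scheme nor $\tau$, $\alpha$ or $n$.

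Second, with this $\delta$ fixed, apply Lemma~\ref{lem:local-stability-ground-state}. This gives $\delta_0>0$, explicitly any $\delta_0<\frac{\delta}{2}\min\{\sqrt{c_1/c_2},1/c_3\}$, such that for every $\phi^0\in U_{\delta_0}(\phi_g)$, every $\alpha\ge\alpha_M+\frac18$ and every $\tau>0$, all iterates satisfy $\phi^n\in U_\delta(\phi_g)$.

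Two points need checking here. One is that $c_3$ is independent of $\tau$. It comes from $\tau^2\|\mu^{n+1}\|_{H^1}^2\le 4(S_{\Omega,\omega}(\phi^n)-S_{\Omega,\omega}(\phi^{n+1}))$ in Theorem~\ref{thm:action_decaying} and from $c_2$, neither of which involves $\tau$, so $\delta_0$ is uniform in $\tau$.

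The other is that $\alpha_M$ is well defined for this class of initial data. $\phi^0\in U_{\delta_0}(\phi_g)$ gives $S_{\Omega,\omega}(\phi^0)\le S_{\Omega,\omega}(\phi_g)+c_2\delta_0^2$, and this is $\le 0$ once $\delta_0$ is small, because $S_{\Omega,\omega}(\phi_g)<0$ by Proposition~\ref{lem:GS-existence}. Hence $\phi^0\in S_{\le m}$ with, say, $m=0$, and $M$, $C_M$, $\alpha_M$ are fixed independently of the particular initial datum.

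Third, for each $n\in\mathbb{N}$, insert $\phi=\phi^n\in U_\delta(\phi_g)$ into \eqref{eq:Lojasiewicz-Inequality}. This yields \eqref{eq:Lojasiewicz-Gradient-Inequality} with the constant $C_{Lo}$ from the first step, which is independent of $n$ and $\tau$.

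The only genuine subtlety is this uniformity bookkeeping, namely that neither $\delta_0$ nor $C_{Lo}$ secretly depends on $\tau$. It is settled by the observations in the second step, so I expect no real obstacle.
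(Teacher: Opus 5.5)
Your proposal is correct and follows the paper's argument: the corollary is a direct application of Lemma~\ref{lem:local-stability-ground-state}, which keeps every iterate in $U_\delta(\phi_g)$, followed by Lemma~\ref{lem:Lojasiewicz-Inequality} applied on that neighborhood. Your extra bookkeeping is also correct and fills in details the paper leaves implicit: $c_3$, and hence $\delta_0$, does not depend on $\tau$, and since $S_{\Omega,\omega}(\phi_g)<0$ you can take $m=0$, which makes $\alpha_M$ uniform over all initial data in $U_{\delta_0}(\phi_g)$.
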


Now we can present {the proof of Theorem \ref{thm:exponential-convergence-ground-state}} below.
\begin{proof}[\textbf{Proof of Theorem~\ref{thm:exponential-convergence-ground-state}}]
	From the elliptic equation \eqref{eq:1-ellipse-equation-mu}, we can  derive that
	\begin{align*}
		\left\langle H_{\phi^{n}} \phi^{n}, v \right\rangle =&\, (1 + \tau\alpha) \left\langle \mu^{n+1}, v \right\rangle + \frac{\tau}{2} \left\langle \nabla \mu^{n+1}, \nabla v \right\rangle \lesssim \left( \left\| \mu^{n+1} \right\|_{L^{2}} + \tau \left\| \mu^{n+1} \right\|_{H^{1}} \right) \left\| v \right\|_{H^{1}},
	\end{align*}
	which by noting the definition of $H^{-1}$-norm can give $ \|H_{\phi^{n}} \phi^{n}\|_{H^{-1}} \lesssim \left\|\mu^{n+1}\right\|_{L^{2}} + \tau \left\| \mu^{n+1} \right\|_{H^{1}}$. By Lemma~\ref{thm:action_decaying}, we have
	\begin{align*}
		\tau\|H_{\phi^{n}} \phi^{n}\|_{H^{-1}}^{2}
		&\lesssim \tau\|\mu^{n+1}\|_{L^{2}}^{2} + \tau^{3}\left\| \mu^{n+1} \right\|_{H^{1}}^{2} \\
        &\lesssim \left(1+\tau\right)\left(\tau\|\mu^{n+1}\|_{L^{2}}^{2} + \tau^{2}\left\| \mu^{n+1} \right\|_{H^{1}}^{2}\right) \\
		&\lesssim (1+\tau)\left(S_{\Omega, \omega}(\phi^{n}) - S_{\Omega, \omega}(\phi^{n+1})\right).
	\end{align*}
	Hence, for some constant $c_{D}>0$ satisfying $c_{D}\leq C_{Lo}$,
	\begin{align*}
		S_{\Omega, \omega}(\phi^{n}) - S_{\Omega, \omega}(\phi^{n+1}) \geq c_{D} \frac{\tau}{1+\tau} \|H_{\phi^{n}} \phi^{n}\|_{H^{-1}}^{2},
	\end{align*}
	which combining with \eqref{eq:Lojasiewicz-Gradient-Inequality} gives
	\begin{align*}
		S_{\Omega, \omega}(\phi^{n}) - S_{\Omega, \omega}(\phi^{n+1})
		\ge \frac{c_{D}}{C_{Lo}}\,\frac{\tau}{1+\tau}\big( S_{\Omega, \omega}(\phi^{n}) - S_{\Omega, \omega}(\phi_{g}) \big).
	\end{align*}
	Denoting $a_{0}=c_{D}/C_{Lo}\in(0,1]$, we then have 
	\begin{align}
		S_{\Omega, \omega}(\phi^{n+1}) - S_{\Omega, \omega}(\phi_{g}) \le \left(1-\frac{a_{0}\tau}{1+\tau}\right)\big( S_{\Omega, \omega}(\phi^{n}) - S_{\Omega, \omega}(\phi_{g}) \big),\quad n\in\mathbb{N}. \label{eq:Somega-nplus1-Somega-ug}
	\end{align}
	Using \eqref{eq:Somega-nplus1-Somega-ug} recursively, we find that for every $\tau>0$,
	\begin{align*}
		S_{\Omega, \omega}(\phi^{n}) - S_{\Omega, \omega}(\phi_{g})
		\le \left(1-\frac{a_{0}\tau}{1+\tau}\right)^{n}\big( S_{\Omega, \omega}(\phi^{0}) - S_{\Omega, \omega}(\phi_{g}) \big) 
		\le \mathrm{e}^{-na_0\tau/(1+\tau)}\big( S_{\Omega, \omega}(\phi^{0}) - S_{\Omega, \omega}(\phi_{g}) \big).
	\end{align*}
	Using the local equivalence $S_{\Omega, \omega}(\phi^{n})-S_{\Omega, \omega}(\phi_{g}) \asymp \left( \mathrm{dist}_{H^{1}}\left[ \phi^{n}, \phi_{g} \right] \right)^{2}$ from Proposition~\ref{prop:coercive-second-variation}, we complete the proof of Theorem~\ref{thm:exponential-convergence-ground-state} by setting $a=a_{0}/2$.
\end{proof}

\begin{remark}
   The semi-discrete DGF scheme \eqref{eq:GF-semi-discretization} can be coupled with a variety of spatial discretization methods, e.g., finite element and pseudospectral approaches, depending on the user's preference. The proposed theoretical analysis framework remains applicable to the fully discrete setting, provided that (i) a quantitative consistency estimate between the continuous optimization problem \eqref{eq:action-gs-def} and its discrete counterpart can be established (see similar efforts e.g., \cite{cances2010numerical,zhou2003analysis}), and (ii) the relevant inequalities, including the embedding and the Gagliardo–Nirenberg interpolation inequalities in Lemma~\ref{lem:standard-inequalities}, continue to hold under the spatial discretization. These issues will be addressed in another work. 
\end{remark}

\section{Numerical Verification} \label{sec. 4}

In this section, we give numerical examples to demonstrate the practical convergence behavior of the DGF scheme \eqref{eq:GF-semi-discretization}. We begin with a one-dimensional example in which analytical GS is available.

\begin{example}[Soliton in 1D]\label{ex:1}
	Letting $d=1$, $\mathbf{x}=x$, $V(x)=0$, $\beta=1$, $p=3$ and $D=[0,L]$ for a given $L>0$ in \eqref{eq:semilinear-elliptic-rot} yields
	\begin{align*}
		-\frac{1}{2}\partial_{xx}\phi(x) + |\phi(x)|^{2}\phi(x) = -\omega \phi(x), \quad x\in (0, L),
	\end{align*}
	with homogeneous Dirichlet boundary conditions. 
	Its action GS (unique positive solution) is the Jacobi elliptic function $\mathrm{sn}(\cdot, \cdot)$ (see \cite{carr2000stationary} for more details):
	\begin{align*}
		\phi_{g}(x) = \frac{2k \, K(k)}{L}\mathrm{sn} \left( \frac{2K(k)x}{L}, k \right)  , \quad  x\in [0, L],
	\end{align*}
	where $K(k) = \int_{0}^{\frac{\pi}{2}}{ \left( 1 - k^{2}\sin^{2} \theta \right)^{-1/2} }\, \mathrm{d}\theta$ is the complete elliptic integral of the first kind and the modulus $k\in [0,1]$ is determined by the equation $2(1+k^{2}) K(k)^{2} + \omega L^{2} = 0$. We take $L = 1$, and use the sine pseudospectral discretization with $N$ discrete sine modes. The stabilization factor can be chosen adaptively as 
	\begin{align*}
		\alpha^{n} = \frac{1}{2}\max \Big\{0,\ \max_{1\leq j \leq N-1}\left( V(x_{j})+\omega + \beta(p+2) |\phi^{n}(x_{j})|^{p-1} \right)\Big\},
	\end{align*}
	referring to \eqref{eq:alpha_n}, where $x_{j} = jL/N$, $j=0,1,\cdots, N$. The initial data is chosen as $\phi_{0}(x) = \sin(\pi x)$. We stop the iteration if the maximal residual is less than $10^{-13}$. i.e., 
	\begin{align*}
		\max_{1\leq j \leq N-1}\left| -\frac{1}{2}\partial_{xx}\phi^{n}(x_{j}) + \omega \phi^{n}(x_{j}) + \left| \phi^{n}(x_{j}) \right|^{2}\phi^{n}(x_{j}) \right| < 10^{-13},
	\end{align*}
\end{example}

\begin{figure}[!ht]
	\centering
	\includegraphics[width=0.42\textwidth]{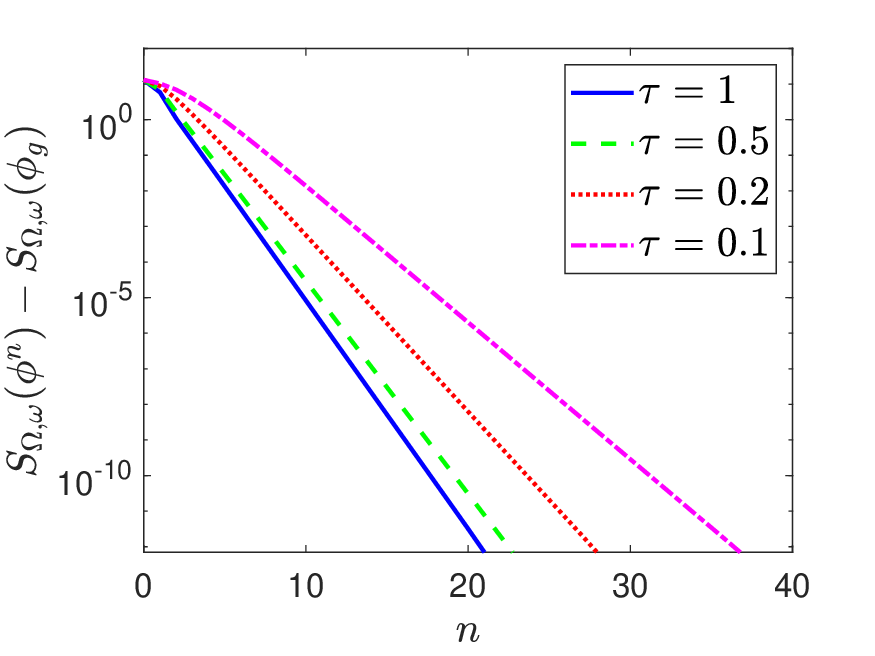} 
	\includegraphics[width=0.42\textwidth]{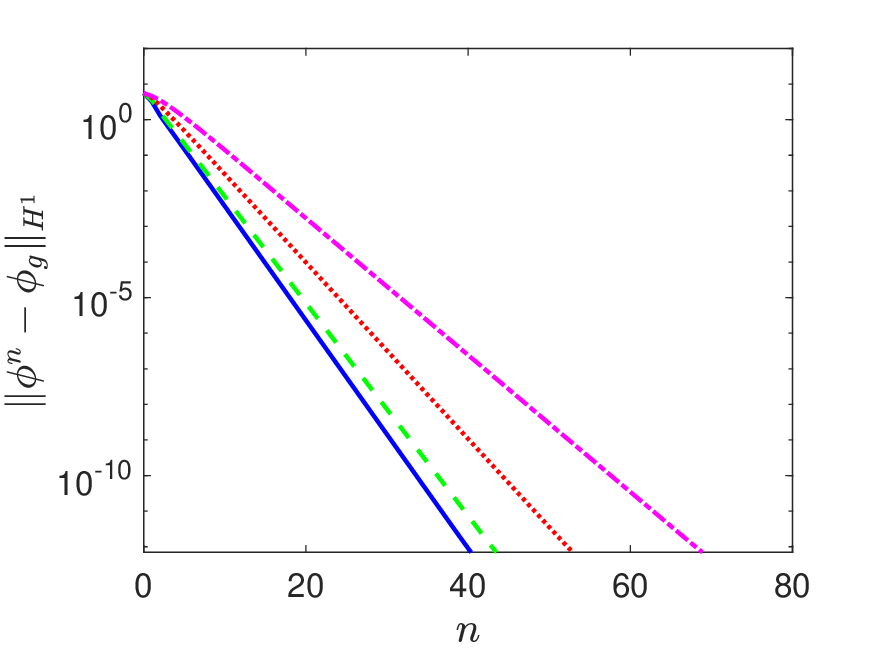}
	\caption{The change of $S_{\Omega,\omega}(\phi^{n}) - S_{\Omega,\omega}(\phi_{g})$ (left) and $H^{1}$-norm error $\|\phi^{n} - \phi_{g}\|_{H^{1}}$ (right) of DGF \eqref{eq:GF-semi-discretization} along iteration (on logarithmic scale) under different $\tau$ with $\omega = -10$ fixed in Example~\ref{ex:1}.}
	\label{fig:ex1-loglog-err}
\end{figure}

\begin{figure}[!ht]
	\centering
    \includegraphics[width=0.325\textwidth,height=0.28\textwidth]{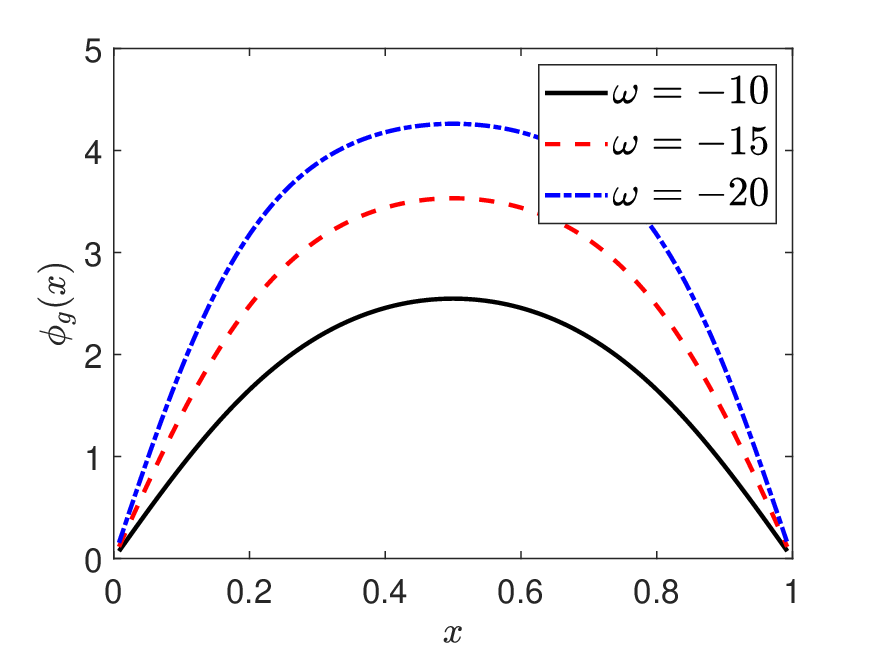}
	\includegraphics[width=0.325\textwidth,height=0.28\textwidth]{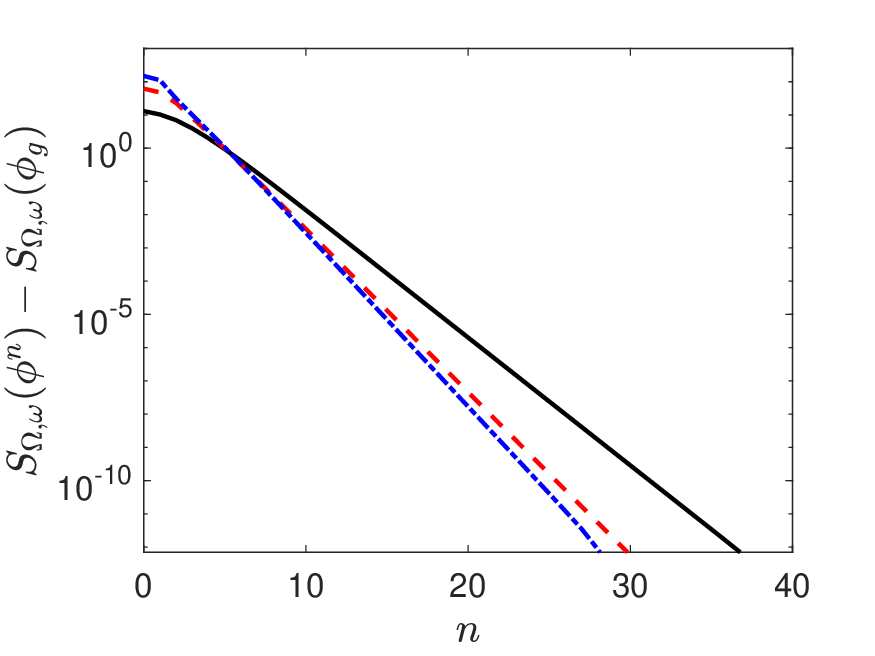} 
	\includegraphics[width=0.325\textwidth,height=0.28\textwidth]{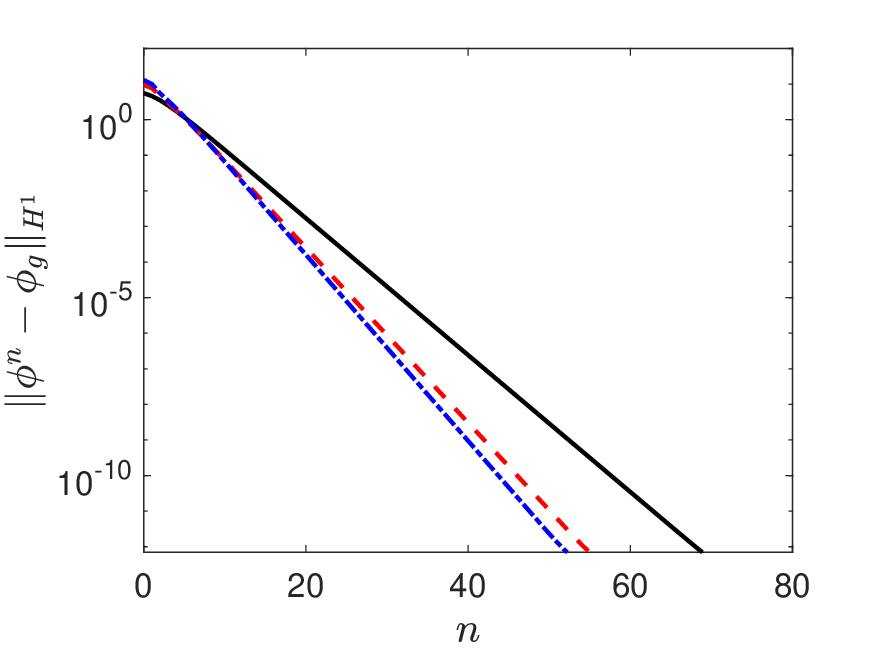}
	\caption{The profiles of GS $\phi_{g}$ (left), the change of $S_{\Omega,\omega}(\phi^{n}) - S_{\Omega,\omega}(\phi_{g})$ (middle) and $H^{1}$-norm error $\|\phi^{n} - \phi_{g}\|_{H^{1}}$ (right) of DGF \eqref{eq:GF-semi-discretization} along iteration (on logarithmic scale) under different $\omega$ with $\tau = 0.1$ fixed in Example~\ref{ex:1}.}
	\label{fig:ex1-loglog-err-omega}
\end{figure}

In Figure~\ref{fig:ex1-loglog-err-omega}, the left subplot shows the profiles of GS $\phi_{g}$ for different values of $\omega=-10,\, -15,\, -20$. Subsequently, the two plots in Figure~\ref{fig:ex1-loglog-err} together with the middle and right subplots in Figure~\ref{fig:ex1-loglog-err-omega} illustrate the evolution of the quantity $S_{\Omega,\omega}(\phi^{n}) - S_{\Omega,\omega}(\phi_{g})$ and the error $\|\phi^{n} - \phi_{g}\|_{H^{1}}$ with respect to the iteration index $n$, for different time step sizes ($\tau=1,\,0.5,\,0.2,\,0.1$) or different $\omega$ values ($\omega=-10,\, -15,\, -20$). It can be observed that all curves exhibit a distinct exponential decay in $n$.

\begin{example}[Vortex states in 2D]\label{ex:2}
	Letting $d=2$, $\mathbf{x}=(x_1,x_2)$, $p=3$, $\beta = 100$, and $V(\mathbf{x}) = \frac{1}{2}\left( \gamma_{1}^{2}x_{1}^{2} + \gamma_{2}^{2}x_{2}^{2} \right)$ with $\gamma_{1} = \gamma_{2} = 1$ in \eqref{eq:GF-semi-discretization}. The computation domain is fixed as $D = [-10, 10]^{2}$ with mesh size $h = 1/8$, and the time step is fixed as $\tau=0.1$, and use the Fourier pseudospectral discretization.
\end{example}

The stabilization factor is chosen adaptively according to \eqref{eq:alpha_n}, and stopping criterion is set as
\begin{align*}
	\left| S_{\Omega,\omega}(\phi^{n}) - S_{\Omega,\omega}(\phi^{n+1}) \right| < 10^{-12}.
\end{align*}
We introduce the functions $\phi_{a} = \sqrt{\gamma_{1}\gamma_{2} / \pi}\, \mathrm e^{-(\gamma_{1}x_{1}^{2} + \gamma_{2}x_{2}^{2})/2}$, and  define the following initial data under different $\Omega$ (as considered in \cite{liu2023computing}):
\begin{align}
	\phi^{0} = \begin{cases}
		\left( x_{1} + \mathrm{i} x_{2} \right) \psi_{a}, & \Omega = 0.3, \\
		\left( x_{1} + \mathrm{i} x_{2} \right)^{4} \psi_{a}, & \Omega = 0.5, \\
		\frac{1}{2}\left( \left( x_{1} + \mathrm{i} x_{2} \right) + \left( x_{1} + \mathrm{i} x_{2} \right)^{4} \right)\psi_{a}, & \Omega = 0.7.
	\end{cases} \label{eq:phi0-experiment}
\end{align}
 Under these initial values, the final steady state exhibits quantized vortices, as shown in Figure~\ref{fig:ex2-phig}.

\begin{figure}[!ht]
	\centering
	\includegraphics[width=0.325\textwidth,height=0.28\textwidth]{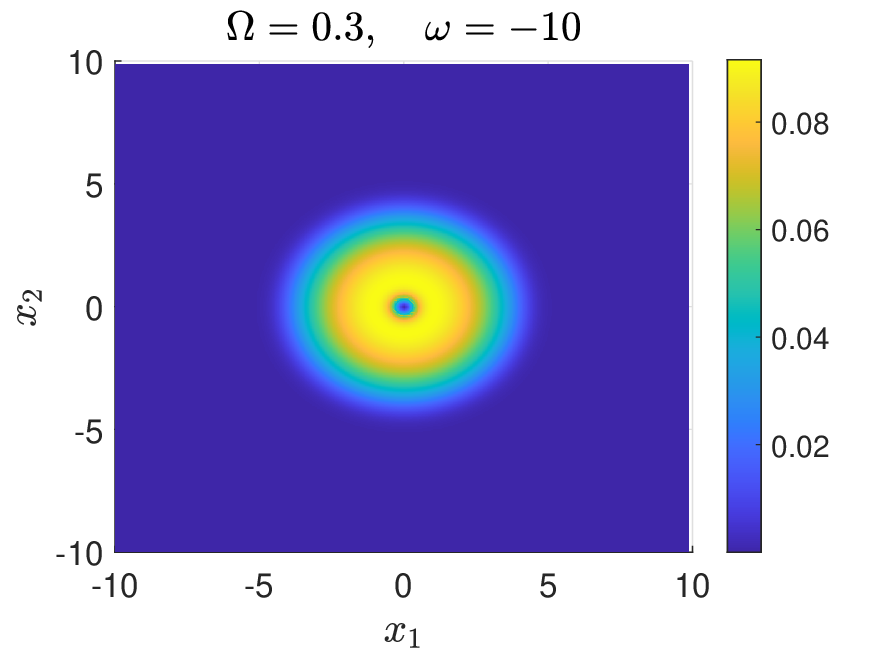} 
	\includegraphics[width=0.325\textwidth,height=0.28\textwidth]{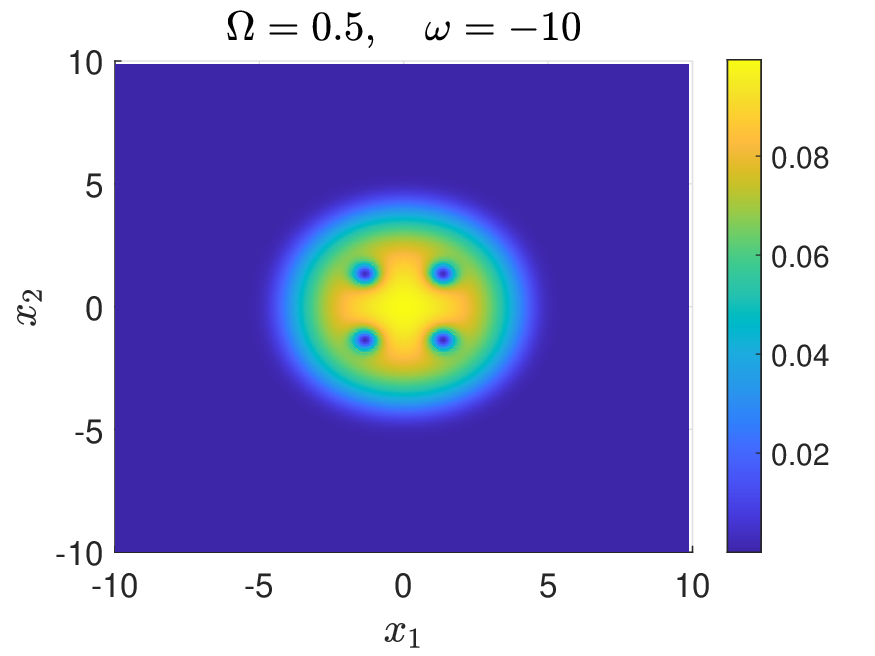}
	\includegraphics[width=0.325\textwidth,height=0.28\textwidth]{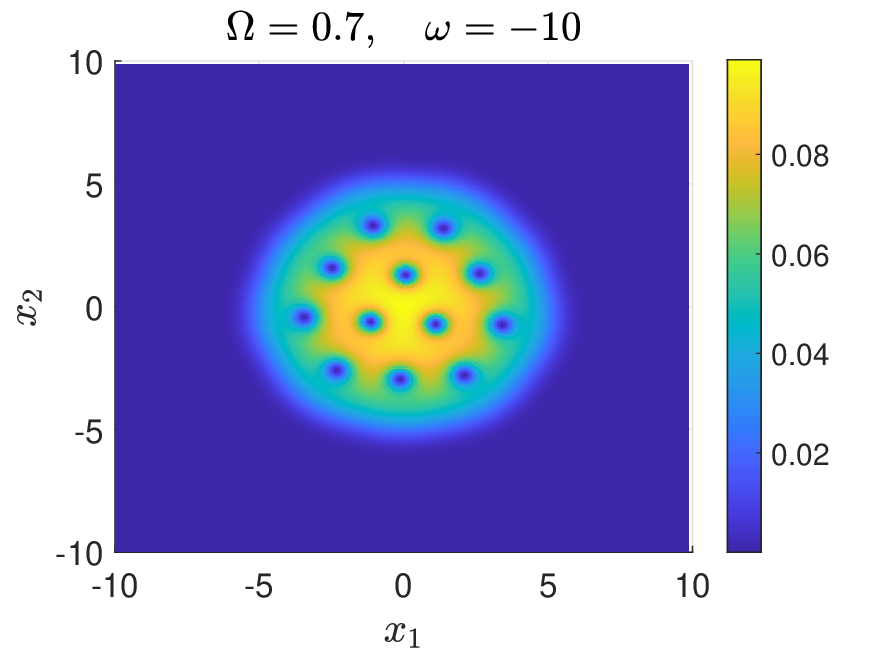} \\
    \includegraphics[width=0.325\textwidth,height=0.28\textwidth]{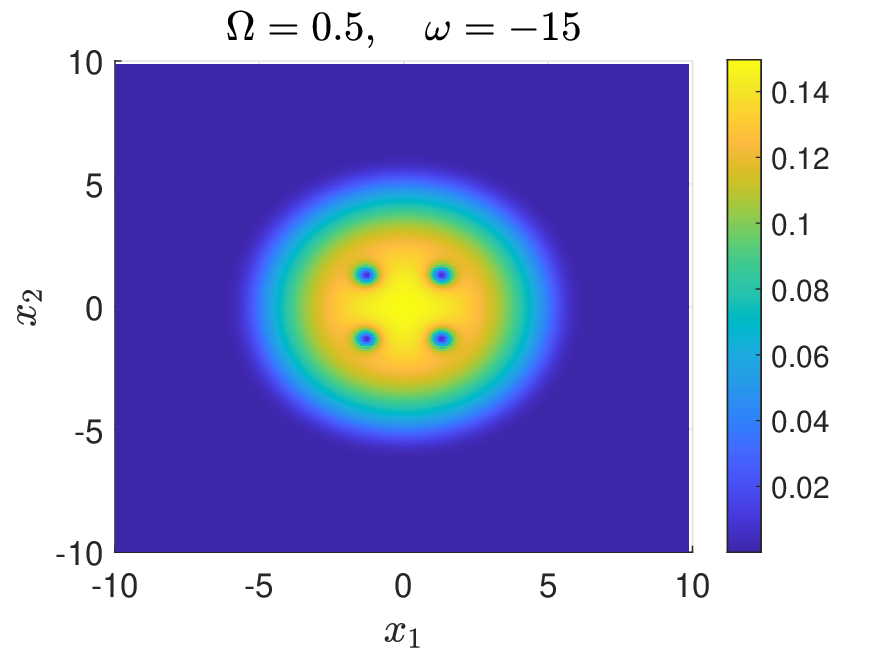}
    \includegraphics[width=0.325\textwidth,height=0.28\textwidth]{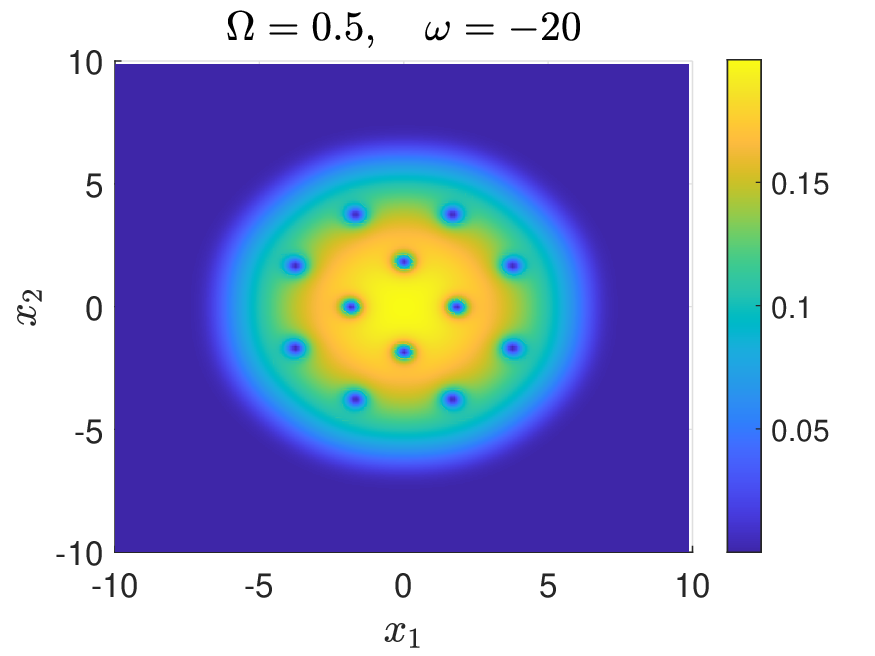}
	\caption{Contour plots of $|\phi_{g}|^{2}$ for different $\Omega$ and $\omega$ with $\phi^{0}$ of \eqref{eq:phi0-experiment} in Example~\ref{ex:2}.}
	\label{fig:ex2-phig}
\end{figure}

\begin{figure}[!ht]
	\centering
	\includegraphics[width=0.325\textwidth,height=0.28\textwidth]{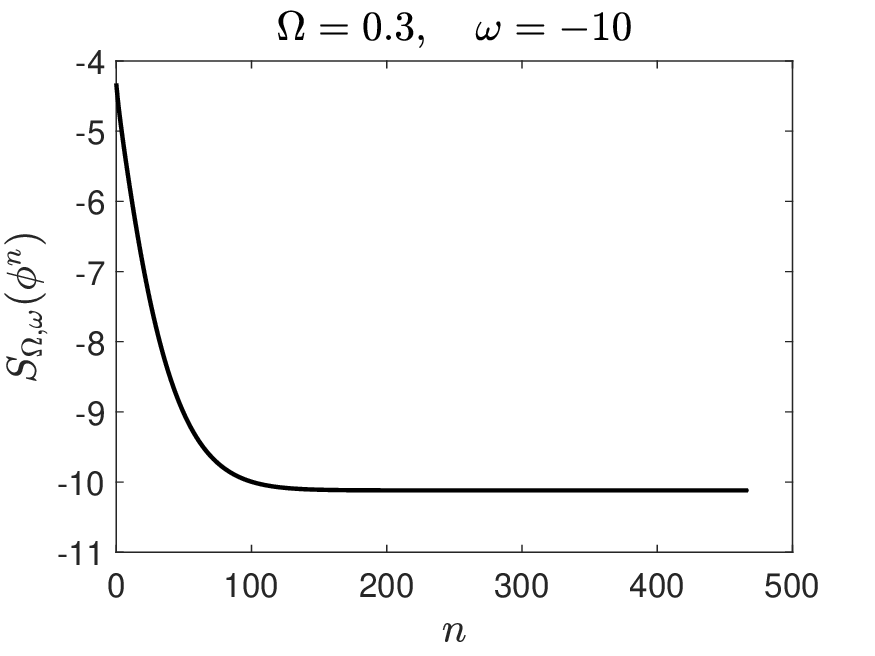} 
	\includegraphics[width=0.325\textwidth,height=0.28\textwidth]{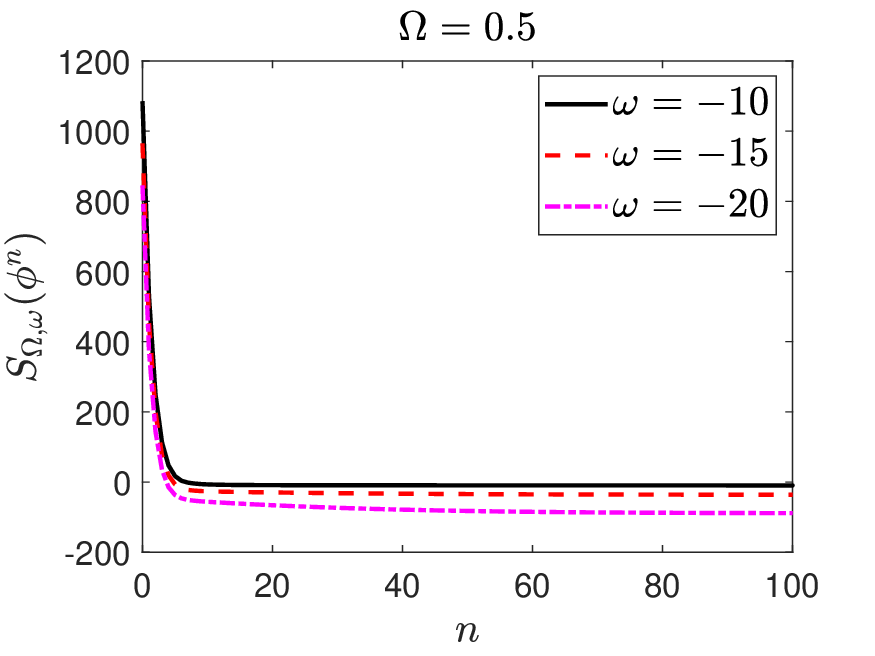}
	\includegraphics[width=0.325\textwidth,height=0.28\textwidth]{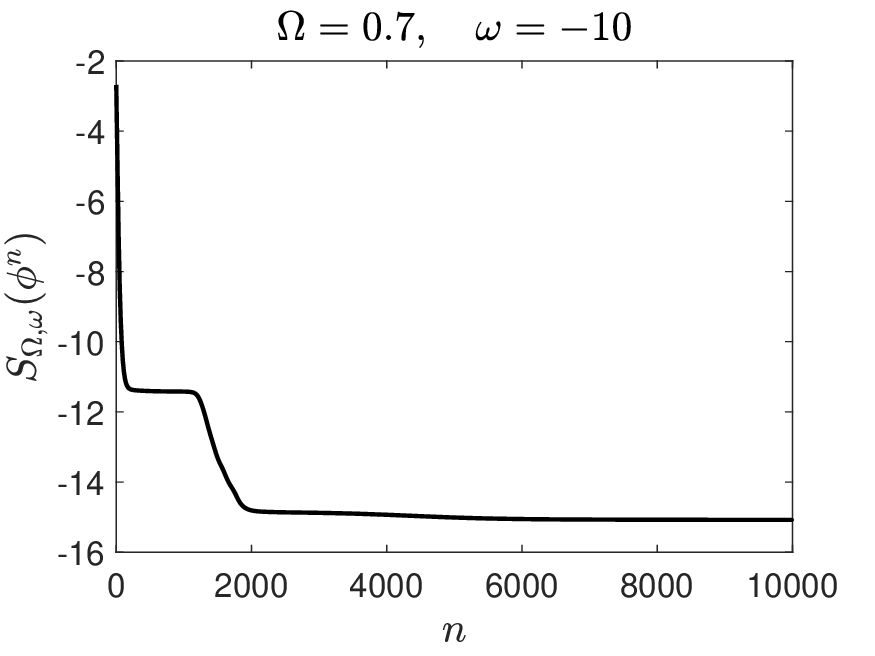} 
	\caption{$S_{\Omega,\omega}(\phi^{n})$ for different $\Omega$ and $\omega$ with $\phi^{0}$ of \eqref{eq:phi0-experiment} in Example~\ref{ex:2}.}
	\label{fig:ex2-Sphi}
\end{figure}

\begin{figure}[!ht]
	\centering
	\includegraphics[width=0.325\textwidth,height=0.28\textwidth]{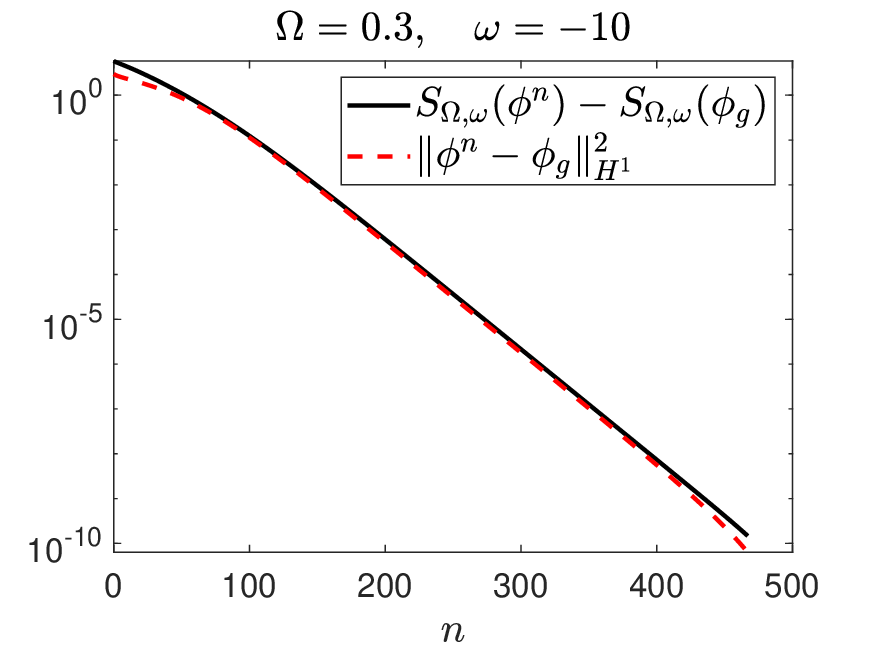} 
	\includegraphics[width=0.325\textwidth,height=0.28\textwidth]{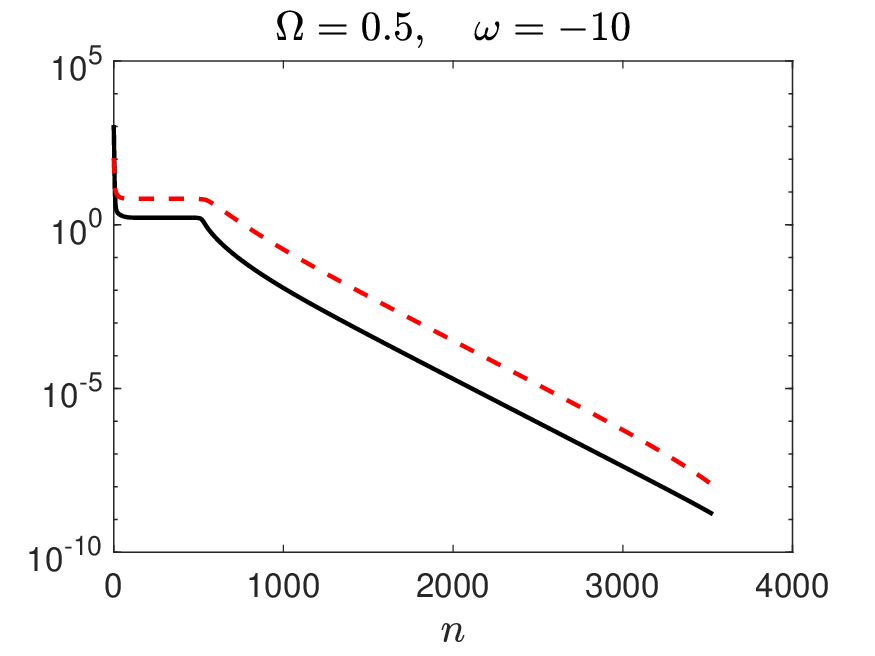}
	\includegraphics[width=0.325\textwidth,height=0.28\textwidth]{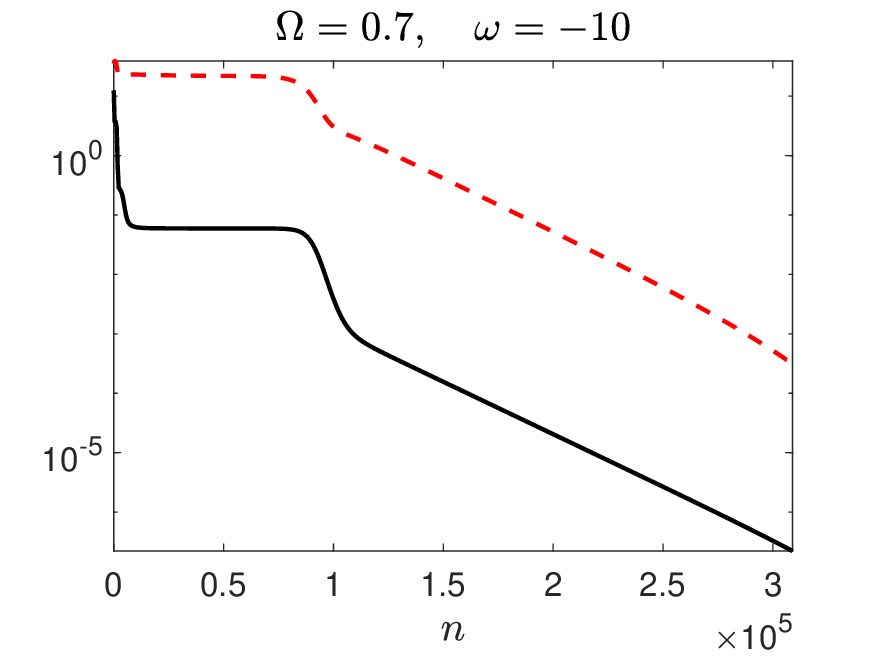} \\
    \includegraphics[width=0.325\textwidth,height=0.28\textwidth]{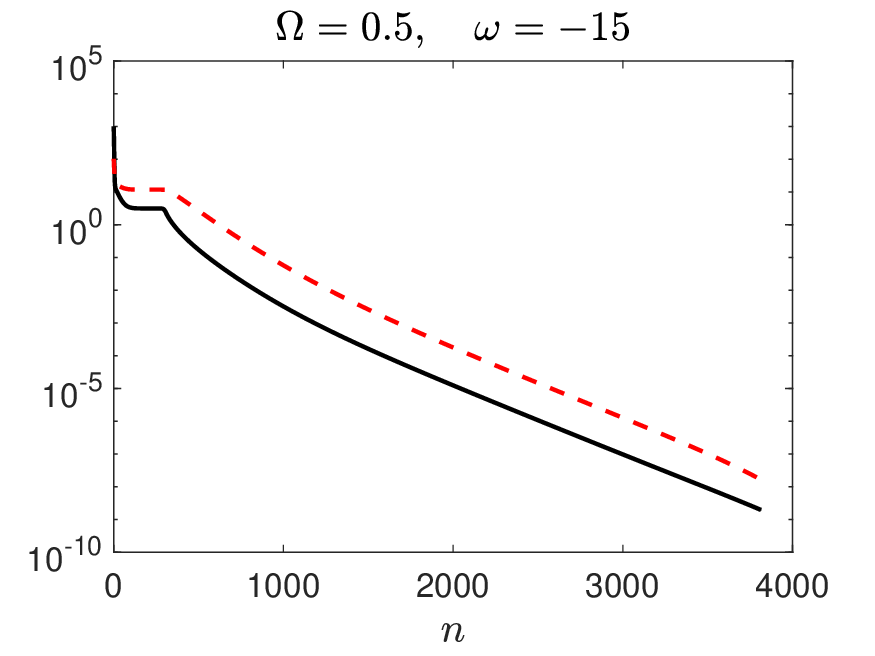}
    \includegraphics[width=0.325\textwidth,height=0.28\textwidth]{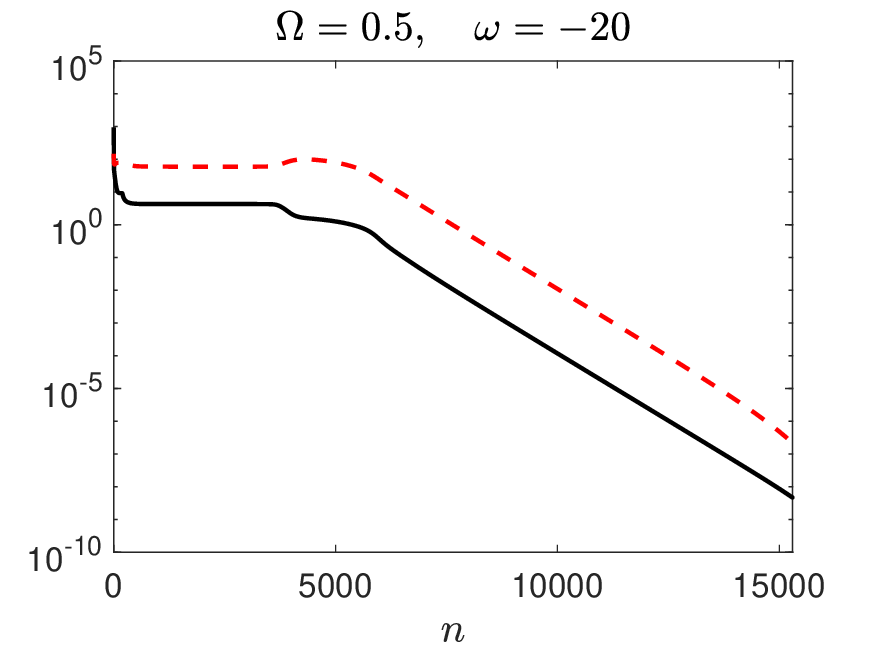}
	\caption{The change of $S_{\Omega,\omega}(\phi^{n}) - S_{\Omega,\omega}(\phi_{g})$ and  $\|\phi^{n} - \phi_{g}\|_{H^{1}}^{2}$ of DGF \eqref{eq:GF-semi-discretization} along iteration (on logarithmic scale) under different $\Omega$ and $\omega$ with $\phi^{0}$ of \eqref{eq:phi0-experiment} in Example~\ref{ex:2}.}
	\label{fig:ex2-loglog-err}
\end{figure}

Figure~\ref{fig:ex2-Sphi} illustrates the evolution of the action functional $S_{\Omega,\omega}(\phi^{n})$ along iteration under different angular velocities $\Omega$ and $\omega$ with initial conditions (specified in \eqref{eq:phi0-experiment}). A key observation is the monotonic decrease of $S_{\Omega,\omega}$ in all cases. Figure~\ref{fig:ex2-loglog-err} illustrates the decay of two key quantities: the energy difference $S_{\Omega,\omega}(\phi^{n}) - S_{\Omega,\omega}(\phi_{g})$ and the squared $H^{1}$-error $\|\phi^{n} - \phi_{g}\|_{H^{1}}^{2}$, plotted against the time step index $n$. Here, $\phi_{g}$ denotes the numerical GS obtained by the DGF method upon satisfying the stopping criterion. In all cases, both curves eventually exhibit a clear linear decay on the log-log scale, which corresponds to an exponential decay in the original variables. Notably, during this exponential decay phase, the two curves are parallel. This parallelism aligns with our theoretical result on the equivalence between the deviation of $S_{\Omega, \omega}$ and the square of the $H^{1}$ distance to the GS. Notably, for $\Omega = 0.5$ and $0.7$, the curves do not exhibit exponential decay during the initial phase of the iteration; instead, they remain relatively flat over a significant interval. This behavior aligns perfectly with our theoretical analysis of local convergence: the exponential decay rate is only guaranteed once the numerical solution enters a sufficiently small neighborhood of the GS $\phi_{g}$.

\section{Conclusion} \label{sec. 5}
This work has analyzed the direct gradient flow (DGF) method for computing defocusing action ground states of rotating nonlinear Schr\"odinger equations (RNLS). We have rigorously established the unconditional stability of the DGF scheme, demonstrating that the action functional is monotonically non-increasing along the discrete flow for any time step size. Furthermore, we have provided a comprehensive convergence analysis, proving both global convergence under mild assumptions and, crucially, local convergence with an optimal exponential rate toward the action ground state under a reasonable non-degeneracy condition. Numerical experiments have validated our theoretical findings, demonstrating that the DGF scheme exhibits a clear exponential convergence behavior after the solution enters a neighborhood of the ground state. The close agreement between the theoretical predictions and the numerical results confirms the effectiveness and robustness of the DGF method for computing defocusing action ground states. Future work will focus on extending this analysis to the fully discrete setting, incorporating spatial discretization methods such as finite element or pseudospectral approaches.

\section*{acknowledgment}
W. Liu is supported by the NSFC grant 12571448 and the Innovation Research Foundation of NUDT (202402-YJRC-XX-002). T. Wang and X. Zhao are supported by National Key Research and Development Program of China, National MCF Energy R\&D Program (No. 2024YFE03240400), NSFC 42450275, 12271413. Y. Yuan is supported by the NSFC grant 12471375.

\bibliographystyle{siam}
\bibliography{references}

\end{document}